\documentclass[11pt,reqno]{amsart}
\usepackage{amsmath,amssymb,mathrsfs}
\usepackage{graphicx,cite,enumitem}
\usepackage{subfigure}
\usepackage{epstopdf}
\usepackage{graphics} 
\usepackage{epsfig} 
\usepackage{tikz}
\usepackage{paralist}
\usepackage{booktabs}
\usepackage{enumerate}

\setlength{\topmargin}{-1.5cm}
\setlength{\oddsidemargin}{0.0cm}
\setlength{\evensidemargin}{0.0cm}
\setlength{\textwidth}{16.7cm}
\setlength{\textheight}{23cm}
\headheight 20pt
\headsep    26pt
\footskip 0.4in

\setlength{\itemsep}{0pt}
\setlength{\parsep}{0pt}
\setlength{\parskip}{2pt}

\newtheorem{theorem}{Theorem}[section]

\newtheorem{problem}{Problem}

\numberwithin{equation}{section}

\begin{document}

\title[inverse obstacle scattering for elastic waves]{Inverse obstacle
scattering problem for elastic waves with phased or phaseless far-field data}

\author{Heping Dong}
\address{School of Mathematics, Jilin University, Changchun,  Jilin 130012, P. R. China}
\email{dhp@jlu.edu.cn}

\author{Jun Lai}
\address{School of Mathematical Sciences, Zhejiang University
	Hangzhou, Zhejiang 310027, China}
\email{laijun6@zju.edu.cn}

\author{Peijun Li}
\address{Department of Mathematics, Purdue University, West Lafayette, Indiana
	47907, USA}
\email{lipeijun@math.purdue.edu}

\thanks{The research of HD was supported in part by the NSFC grants 11801213 and
11771180. The research of JL was partially supported by the Funds for Creative
Research Groups of NSFC (No. 11621101), the Major Research Plan of NSFC (No.
91630309), NSFC grant No. 11871427 and The Fundamental Research Funds for the
Central Universities.}

\subjclass[2010]{78A46, 65N21}

\keywords{The elastic wave equation, inverse obstacle scattering, phaseless
data, the Helmholtz decomposition, boundary integral equations}

\begin{abstract}
This paper concerns an inverse elastic scattering problem which is to
determine the location and the shape of a rigid obstacle from the phased or
phaseless far-field data for a single incident plane wave. By
introducing the Helmholtz decomposition, the model problem is reduced to a
coupled boundary value problem of the Helmholtz equations. The relation
is established between the compressional or shear far-field pattern for the
elastic wave equation and the corresponding far-field pattern for the coupled
Helmholtz equations. An efficient and accurate Nystr\"{o}m type discretization
for the boundary
integral equation is developed to solve the coupled system. The
translation invariance of the phaseless compressional and shear far-field
patterns are proved. A system of nonlinear integral equations is proposed and
two iterative reconstruction methods are developed for the inverse problem. In
particular, for the phaseless data, a reference ball technique is
introduced to the scattering system in order to break the translation
invariance. Numerical experiments are presented to demonstrate the effectiveness
and robustness of the proposed method.
\end{abstract}

\maketitle

\section{Introduction}

Scattering problems for elastic waves have significant applications in 
seismology and geophysics \cite{LL-86}. As an important research topic in
scattering theory, the inverse obstacle scattering problem (IOSP) is to identify
unknown objects that is not accessible by direct observation
through the use of waves. The IOSP for elastic waves have continuously
attracted much attention by many researchers. The recent development can be
found in \cite{ABG-15} on mathematical and numerical methods for solving the
IOSP in elasticity imaging. 

The phased IOSP refers to as the IOSP by using full scattering data
which contains both phase and amplitude information. The phased IOSP for
elastic waves has been extensively studied and a great deal of mathematical and
numerical results are available. In \cite{L-IP15, L-SIAP12, LWWZ-IP16}, the
domain derivatives were investigated by using either the boundary integral
equation method or the variational method. In \cite{YLLY}, based on the
Helmholtz decomposition, the boundary value problem of the Navier equation was
converted into a coupled boundary value problem of the Helmholtz equations. A
frequency continuation method was developed to reconstruct the shape of the
obstacles. We refer to  \cite{HH-IP93, EY-IP10} for the uniqueness results on
the inverse elastic obstacle scattering problem. Related work can be found in
\cite{LaiLi, BC-IP05, HKS-IP13, KS-JE15, K-IP96, NU-SIAP95, HLLS-SJIS14,
LWZ-IP15} on the general inverse scattering problems for elastic waves.

In many practical applications, the phase of a signal either can be very
difficult to be measured or can not be measured accurately compared with its
amplitude or intensity. Thus it is often desirable to solve the problems with
phaseless data, which are called phase retrieval problems in optics, or physical
and engineering sciences. Due to the translation invariance property of the
phaseless wave field, it is impossible to reconstruct uniquely the location of
the unknown objects, which makes the phaseless inverse scattering problems much
more difficult compared to the phased case. Various numerical methods have been
proposed to solve
the phaseless IOSP for acoustic waves governed by the scalar Helmholtz
equation. In \cite{RW1997}, Kress and Rundell proposed a Newton's iterative
method for imaging a two-dimensional sound-soft obstacle from the phaseless
far-field data with one incident wave. A nonlinear integral equation method was
developed in \cite{Ivanyshyn2007, OR2010} for the two- and three-dimensional
shape reconstruction from a given incident field and the modulus of the
far-field data, respectively. The nonlinear integral equation method proposed by
Johansson and Sleeman \cite{TB2007} was extended to reconstruct the shape of a
sound-soft crack by using phaseless far-field data for one incident plane wave
in \cite{GDM2018}. In addition, fundamental solution method
\cite{KarageorghisAPNUM} and a hybrid method \cite{Lee2016} were proposed to
detect the shape of a sound-soft obstacle by using of the modulus of the
far-field data for one incident field. To overcome the nonuniqueness issue,
Zhang et al. \cite{ZhangBo2017, ZhangBo2018} proposed to use superposition of
two plane waves with different incident directions as the illuminating field to
recover both of the location and the shape of an obstacle simultaneously by
using phaseless far-field data. Recently, a reference ball
technique was developed in \cite{DZhG2018} to break the translation invariance
and reconstruct both the location and shape of an obstacle from phaseless
far-field data for one incident plane wave. We refer to \cite{XZZ18, LZ2010,
ZG18} for the uniqueness results on the inverse scattering problems by using
phaseless data. For related phaseless inverse scattering problems as
well as numerical methods may be found in \cite{Ammari2016, Li2017,
CH2016, BLL2013, Bao2016, JiLiuZhang2018_1, ZGLL18, Klibanov2014,
Klibanov2017}. 

In this paper, we consider the inverse elastic scattering problem of determining
the location and shape of a rigid obstacle from phased or phaseless
far-field data for a single incident plane wave. Motivated by the recent work in
\cite{DZhG2018, GDM2018}, the reference ball technique in \cite{LiJingzhi2009,
ZG18}, and the Helmholtz decomposition in \cite{YLLY}, we propose a nonlinear
integral equation method combined with the reference ball technique to solve the
IOSP for elastic waves. In particular, for the phaseless IOSP, since the
location of the reference ball is known, the method has the capability of
calibrating the scattering system so that the translation invariance does not
hold anymore. Therefore, the location information of the obstacle can be
recovered with negligible additional computational costs. Moreover, we develop a
Nystr\"{o}m type discretization for the integral equation to efficiently and
accurately solve  the direct obstacle scattering problem for elastic waves. It
is worth mentioning
that the proposed method for phased and phaseless IOSP are extremely efficient
since we only need to solve the scalar Helmholtz equations and avoid solving the
vector Navier equations. The goal of this work is fivefold: establish the
relationship between the compressional or shear far-field pattern for the
Navier equation and the corresponding far-field pattern for the coupled
Helmholtz system; prove the translation invariance property of the phaseless
compressional and shear far-field pattern; develop a Nystr\"{o}m discretization
for the boundary integral equation to solve the direct obstacle scattering
problems for elastic waves; propose a nonlinear integral equation method  to
reconstruct the obstacle's location and shape by using far-field data for one
incident plane wave; develop a reference ball based nonlinear integral equation
method to reconstruct the obstacle's location and shape by using phaseless
far-field data for one incident plane wave.

The paper is organized as follows. In Section 2, we introduce the problem
formulation. Section 3  establishes the
relationship of the far-field patterns between the elastic wave equation and the
coupled Helmholtz system. In Section 4, a Nystr\"{o}m-type discretization is
developed to solve the coupled boundary value problem of the
Helmholtz equations. In Section 5, a nonlinear integral equation method and a
reference ball based iterative method are present to solve the phased and
phaseless IOSP, respectively. Numerical experiments are shown to demonstrate the
feasibility of the proposed methods in Section 6. The paper is concluded with
some general remarks and directions for future work in Section 7.

\section{Problem formulation}

Consider a two-dimensional elastically rigid obstacle,
which is described as a bounded domain $D\subset\mathbb R^2$ with
$\mathcal{C}^2$ boundary $\Gamma_D$. Denote by $\nu=(\nu_1, \nu_2)^\top$ and
$\tau=(\tau_1, \tau_2)^\top$ the unit normal and tangential vectors on
$\Gamma_D$, respectively, where $\tau_1=-\nu_2, \tau_2=\nu_1$. The exterior
domain $\mathbb{R}^2\setminus \overline{D}$ is assumed to be filled with a
homogeneous and isotropic elastic medium with a unit mass density.

Let the obstacle be illuminated by a time-harmonic plane wave
$\boldsymbol{u}^{\rm inc}$, which satisfies the two-dimensional Navier equation:
\[
\mu\Delta\boldsymbol{u}^{\rm inc}
+(\lambda+\mu)\nabla\nabla\cdot\boldsymbol{ u}^{\rm inc}
+\omega^2\boldsymbol{u}^{\rm inc}=0\quad {\rm in} ~
\mathbb{R}^2\setminus \overline{D},
\]
where $\omega>0$ is the angular frequency and $\lambda, \mu$ are the
Lam\'{e} constants satisfying $\mu>0, \lambda+\mu>0$. Explicitly, we have
\[
 \boldsymbol{u}^{\rm inc}(x)=d \mathrm{e}^{{\rm i} \kappa_{\rm p}
d\cdot x}\quad \text{or} \quad \boldsymbol{u}^{\rm inc}(x)=d^{\perp} 
\mathrm{e}^{{\rm i} \kappa_{\rm s} d\cdot x},
\]
where the former is the compressional plane wave and the latter is the shear
plane wave. Here $d=(\cos\theta, \sin\theta)^\top$ is
the unit propagation direction vector, $\theta\in [0, 2\pi)$ is the incident
angle, $d^{\perp}=(-\sin\theta, \cos\theta)^\top$ is an orthonormal vector of
$d$,
and
\[
 \kappa_{\rm p}=\frac{\omega}{\sqrt{\lambda+2\mu}},\quad
\kappa_{\rm s}=\frac{\omega}{\sqrt{\mu}}
\]
are the compressional wavenumber and the shear wavenumber, respectively.

The displacement of the total field $\boldsymbol{u}$ also satisfies the Navier
equation
\[
\mu\Delta\boldsymbol{u}+(\lambda+\mu)\nabla\nabla\cdot\boldsymbol{u}
+\omega^2\boldsymbol{u}=0\quad {\rm in}\, \mathbb{R}^2\setminus \overline{D}. 	
\]
Since the obstacle is rigid, the total field $\boldsymbol u$ satisfies 
\[
\boldsymbol{u}=0\quad {\rm on}~\Gamma_D.
\]
The total field $\boldsymbol u$ consists of the incident field $\boldsymbol
u^{\rm inc}$ and the scattered field $\boldsymbol v$, i.e., 
\[
 \boldsymbol u=\boldsymbol u^{\rm inc}+\boldsymbol v. 
\]
It is easy to verify that the scattered field $\boldsymbol v$ satisfies
the boundary value problem
\begin{equation}\label{scatteredfield}
\begin{cases}
\mu\Delta\boldsymbol{v}+(\lambda+\mu)\nabla\nabla\cdot\boldsymbol{v}
+\omega^2\boldsymbol{v}=0\quad &{\rm in}~
\mathbb{R}^2\setminus\overline{D},\\
\boldsymbol{v}=-\boldsymbol{u}^{\rm inc}\quad &{\rm on}~\Gamma_D.
\end{cases}
\end{equation}
In addition, the scattered field $\boldsymbol v$ is required to satisfy
the Kupradze--Sommerfeld radiation condition
\[
\lim_{\rho\to\infty}\rho^{\frac{1}{2}}(\partial_\rho\boldsymbol v_{\rm p}-{\rm
i}\kappa_{\rm p}\boldsymbol v_{\rm p})=0,\quad
\lim_{\rho\to\infty}\rho^{\frac{1}{2}}(\partial_\rho\boldsymbol v_{\rm s}-{\rm
i}\kappa_{\rm s}\boldsymbol v_{\rm s})=0,\quad \rho=|x|,
\]
where
\[
\boldsymbol v_{\rm p}=-\frac{1}{\kappa_{\rm p}^2}\nabla\nabla\cdot\boldsymbol
v,\quad \boldsymbol v_{\rm s}=\frac{1}{\kappa_{\rm s}^2}{\bf curl}{\rm
curl}\boldsymbol v,
\]
are known as the compressional and shear wave components of $\boldsymbol v$,
respectively. Given a vector function $\boldsymbol v=(v_1, v_2)^\top$ and
a scalar function $v$, define the scalar and vector curl operators:
\[
{\rm curl}\boldsymbol v=\partial_{x_1}v_2-\partial_{x_2}v_1, \quad
{\bf curl}v=(\partial_{x_2}v, -\partial_{x_1}v)^\top.
\]

For any solution $\boldsymbol v$ of the elastic wave equation
\eqref{scatteredfield}, the Helmholtz decomposition reads
\begin{equation}\label{HelmDeco}
\boldsymbol{v}=\nabla\phi+\boldsymbol{\rm curl}\psi.
\end{equation}
Combining \eqref{HelmDeco} and \eqref{scatteredfield}, we may obtain the
Helmholtz equations
\[
\Delta\phi+\kappa_{\rm p}^{2}\phi=0, \quad \Delta\psi+\kappa_{\rm s}^{2}\psi=0.
\]
As usual, $\phi$ and $\psi$ are required to satisfy the Sommerfeld radiation
conditions
\begin{equation*}
\lim_{\rho\to\infty}\rho^{\frac{1}{2}}(\partial_{\rho}
\phi-{\rm i}\kappa_{\rm p}\phi)=0, \quad
\lim_{\rho\to\infty}\rho^{\frac{1}{2}}(\partial_{\rho}
\psi-{\rm i}\kappa_{\rm s}\psi)=0, \quad \rho=|x|.
\end{equation*}

Combining the Helmholtz decomposition and boundary condition on $\Gamma_D$
yields that
\[
 \boldsymbol{v}=\nabla\phi+{\bf curl}\psi=-\boldsymbol{u}^{\rm
inc}.
\]
Taking the dot product of the above equation with $\nu$ and
$\tau$, respectively, we get
\[
 \partial_\nu\phi+\partial_\tau\psi=f_1,\quad
\partial_\tau\phi-\partial_\nu\psi=f_2,
\]
where
\[
f_1=-\nu\cdot\boldsymbol{u}^{\rm inc},\quad
f_2=-\tau\cdot\boldsymbol{u}^{\rm inc}.
\]
In summary, the scalar potential functions $\phi, \psi$ satisfy the coupled
boundary value problem
\begin{align}\label{HelmholtzDec}
\begin{cases}
\Delta\phi+\kappa_{\rm p}^{2}\phi=0, \quad \Delta\psi+\kappa_{\rm s}^{2}\psi=0
\quad &{\rm in} ~\mathbb{R}^2\setminus\overline{D},\\
\partial_\nu\phi+\partial_\tau\psi=f_1, \quad
\partial_\tau\phi-\partial_\nu\psi=f_2 \quad &{\rm on} ~ \Gamma_D,\\
\displaystyle{\lim_{\rho\to\infty}\rho^{\frac{1}{2}}(\partial_{\rho}
\phi-{\rm i}\kappa_{\rm p}\phi)=0}, \quad
\displaystyle{\lim_{\rho\to\infty}\rho^{\frac{1}{2}}(\partial_{\rho}
\psi-{\rm i}\kappa_{\rm s}\psi)=0}, \quad &\rho=|x|.
\end{cases}
\end{align}

It is well known that a radiating solution of \eqref{scatteredfield} has the
asymptotic behavior of the form
\[
\boldsymbol{v}(x)=\frac{\mathrm{e}^{\mathrm{i}\kappa_{\rm p}|x|}}{\sqrt{|x|}}
\boldsymbol{v}_{\rm p}^\infty(\hat{x})+\frac{\mathrm{e}^{\mathrm{i}\kappa_{\rm
s}|x|}}{\sqrt{|x|}} \boldsymbol{v}_{\rm
s}^\infty(\hat{x})+\mathcal{O}\left(\frac{1}{|x|^{\frac{3}{2}}}\right),
\quad |x|\to\infty
\]
uniformly in all directions $\hat{x}:=x/|x|$, where $\boldsymbol{v}_{\rm
p}^\infty$ and $\boldsymbol{v}_{\rm s}^\infty$, defined on
the unit circle $\Omega$, are known as the compressional and shear far-field
pattern of $\boldsymbol{v}$, respectively. Define $(a_{\rm
p}, a_{\rm s}):=(1,0)$ or $(0,1)$, where $a_{\rm p}$ and $a_{\rm s}$ denote the
coefficients of compressional and shear wave respectively. The inverse
obstacle scattering problem for elastic waves can be stated as follows:

\begin{problem}[Phased IOSP]\label{problem_1}
Given an incident plane wave $\boldsymbol{u}^{\rm inc}$ for a fixed angular
frequency $\omega$, Lam\'{e} parameters $\lambda, \mu$, and a single
incident direction $d$, the IOSP is to determine the location and shape of the
boundary $\Gamma_D$ from the far-field data $(a_{\rm p}\boldsymbol{v}_{\rm
p}^\infty(\hat x),a_{\rm s}\boldsymbol{v}_{\rm s}^\infty(\hat x)), \forall\hat
x\in\Omega$, which is generated by the obstacle $D$. 
\end{problem}

\begin{figure}
\centering
\includegraphics[width=0.5\textwidth]{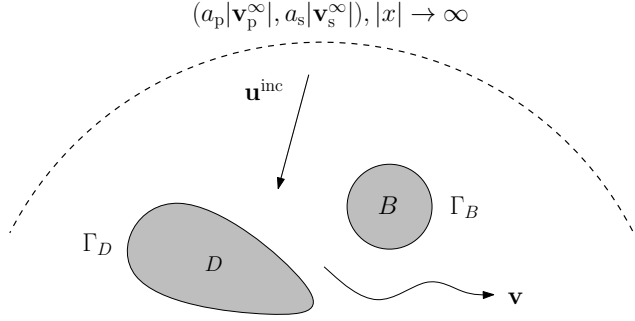}
\caption{The problem geometry of elastic obstacle scattering.}
\label{fig:illustration}
\end{figure}

In the next section, we will show that both the modulus of compressional and
shear far-field pattern have translation invariance for a shifted domain, when
only the compressional or shear plane wave is used as an incident field. It
implies that the inverse problem does not admit a unique solution by using the
phaseless far-field patterns. Our goal is to overcome this issue by introducing
a reference ball. As seen in Figure \ref{fig:illustration}, an
elastically rigid ball $B=\left\{x\in
\mathbb{R}^2: |x|<R \right\}\subset\mathbb{R}^2$ with boundary $\Gamma_B$ is
placed
next to the obstacle $D$. The domain $B$ is called the reference ball
and is used to break the translation invariance for the far-field pattern. To
this end, the phaseless inverse obstacle scattering problem is stated as
follows:

\begin{problem}[Phaseless IOSP]\label{problem_2}
Let $B=\left\{x\in \mathbb{R}^2: |x|<R \right\}\subset\mathbb{R}^2$ be an
artificially added rigid ball such that $D\cap B=\emptyset$. Given a
compressional or shear incident plane wave $\boldsymbol{u}^{\rm inc}$ for a
fixed angular frequency $\omega$, Lam\'{e} parameters $\lambda, \mu$, and a
single incident direction $d$, the IOSP is to determine the location and shape
of
the boundary $\Gamma_D$ from the phaseless far-field data $\big(a_{\rm
p}|\boldsymbol{v}_{\rm p}^\infty(\hat x)|, a_{\rm s} |\boldsymbol{v}_{\rm
s}^\infty(\hat x)|\big), \forall\hat x\in\Omega$, which is generated by the
scatterer $D\cup B$. 
\end{problem}

In the following, we shall introduce a system of nonlinear integral equations
and develop corresponding reconstruction algorithm for solving Problem
\ref{problem_1} and Problem \ref{problem_2}, respectively. 

\section{Far-field patterns}

In this section, we establish the relationship of the far-field patterns
between the scattered field $\boldsymbol v$ and the scalar
potentials $\phi, \psi$. 

Denote the fundamental solution to the two-dimensional Helmholtz equation by
\[
\Phi(x,y;\kappa)=\frac{\mathrm{i}}{4}H_0^{(1)}(\kappa|x-y|), \quad x\neq y,
\]
where $H_0^{(1)}$ is the Hankel function of the first kind with order zero. 

\begin{theorem}\label{Th1}
The radiating solution $\boldsymbol{v}$ to the Navier equation has the
asymptotic behavior 
\begin{equation*}
\boldsymbol{v}(x)=\frac{\mathrm{e}^{\mathrm{i}\kappa_{\rm
p}|x|}}{\sqrt{|x|}} \boldsymbol{v}_{\rm
p}^\infty+\frac{\mathrm{e}^{\mathrm{i}\kappa_{\rm s}|x|}}{\sqrt{|x|}}
\boldsymbol{v}_{\rm
s}^\infty+\mathcal{O}\left(\frac{1}{|x|^{\frac{3}{2}}}\right),
\quad |x|\to\infty
\end{equation*}
uniformly for all direction $\hat{x}$,  where the vectors
\begin{equation}\label{behaviour relation}
\boldsymbol{v}_{\rm p}^\infty(\hat{x})=\mathrm{i}\kappa_{\rm
p}\phi_{\infty}(\hat{x})\hat{x}, \qquad \boldsymbol{v}_{\rm
s}^\infty(\hat{x})=-\mathrm{i}\kappa_{\rm
s}\psi_{\infty}(\hat{x})\hat{x}^{\perp}
\end{equation}
defined on the unit circle $\Omega$ are the far-field patterns of
$\boldsymbol{v}_{\rm p}$ and $\boldsymbol{v}_{\rm s}$, and the complex-valued
functions $\phi_\infty(\hat{x})$ and $\psi_\infty(\hat{x})$ are the far-field
patterns corresponding to $\phi$ and $\psi$.
\end{theorem}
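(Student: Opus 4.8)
The plan is to start from the Helmholtz decomposition \eqref{HelmDeco} and exploit the well-known far-field asymptotics of radiating solutions to the scalar Helmholtz equation. First I would record the identities $\boldsymbol{v}_{\rm p}=\nabla\phi$ and $\boldsymbol{v}_{\rm s}=\mathbf{curl}\,\psi$: taking the divergence of \eqref{HelmDeco} and using $\nabla\cdot\mathbf{curl}\,\psi=0$ together with $\Delta\phi=-\kappa_{\rm p}^2\phi$ gives $\nabla\cdot\boldsymbol{v}=-\kappa_{\rm p}^2\phi$, so that $\boldsymbol{v}_{\rm p}=-\kappa_{\rm p}^{-2}\nabla\nabla\cdot\boldsymbol{v}=\nabla\phi$; similarly $\mathrm{curl}\,\nabla\phi=0$ and $\mathrm{curl}\,\mathbf{curl}\,\psi=-\Delta\psi=\kappa_{\rm s}^2\psi$ yield $\boldsymbol{v}_{\rm s}=\kappa_{\rm s}^{-2}\mathbf{curl}\,\mathrm{curl}\,\boldsymbol{v}=\mathbf{curl}\,\psi$. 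This reduces the claim to computing the far-field behaviour of $\nabla\phi$ and $\mathbf{curl}\,\psi$.

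Since $\phi$ and $\psi$ are radiating solutions of the Helmholtz equations with wavenumbers $\kappa_{\rm p}$ and $\kappa_{\rm s}$, they admit the standard expansions
\begin{equation*}
\phi(x)=\frac{\mathrm{e}^{\mathrm{i}\kappa_{\rm p}|x|}}{\sqrt{|x|}}\phi_\infty(\hat{x})+\mathcal{O}\!\left(\frac{1}{|x|^{3/2}}\right),\quad
\psi(x)=\frac{\mathrm{e}^{\mathrm{i}\kappa_{\rm s}|x|}}{\sqrt{|x|}}\psi_\infty(\hat{x})+\mathcal{O}\!\left(\frac{1}{|x|^{3/2}}\right),
\end{equation*}
uniformly in $\hat{x}$, together with the corresponding expansions for their gradients. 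The main computation is then to differentiate these expansions: using $\nabla|x|=\hat{x}$ one has $\nabla\mathrm{e}^{\mathrm{i}\kappa_{\rm p}|x|}=\mathrm{i}\kappa_{\rm p}\hat{x}\,\mathrm{e}^{\mathrm{i}\kappa_{\rm p}|x|}$, so the dominant contribution to $\nabla\phi$ comes from differentiating the radial exponential, while differentiating the amplitude $|x|^{-1/2}$ and the angular factor $\phi_\infty(\hat{x})$ (whose $x$-derivatives scale like $|x|^{-1}$) only produces terms of order $|x|^{-3/2}$. This gives
\begin{equation*}
\nabla\phi(x)=\frac{\mathrm{e}^{\mathrm{i}\kappa_{\rm p}|x|}}{\sqrt{|x|}}\,\mathrm{i}\kappa_{\rm p}\phi_\infty(\hat{x})\,\hat{x}+\mathcal{O}\!\left(\frac{1}{|x|^{3/2}}\right),
\end{equation*}
which identifies $\boldsymbol{v}_{\rm p}^\infty(\hat{x})=\mathrm{i}\kappa_{\rm p}\phi_\infty(\hat{x})\hat{x}$.

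For the shear part I would apply the same expansion to $\mathbf{curl}\,\psi=(\partial_{x_2}\psi,-\partial_{x_1}\psi)^\top$. Writing $\hat{x}=(\hat{x}_1,\hat{x}_2)^\top$, the leading term of $\nabla\psi$ is the radial factor times $\mathrm{i}\kappa_{\rm s}\psi_\infty(\hat{x})\hat{x}$, so the leading term of $\mathbf{curl}\,\psi$ is proportional to $(\hat{x}_2,-\hat{x}_1)^\top=-\hat{x}^{\perp}$; this sign is the source of the minus in the stated formula and yields $\boldsymbol{v}_{\rm s}^\infty(\hat{x})=-\mathrm{i}\kappa_{\rm s}\psi_\infty(\hat{x})\hat{x}^{\perp}$. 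Adding the two contributions and collecting the $\mathcal{O}(|x|^{-3/2})$ remainders reproduces the claimed asymptotics. The one point requiring care is the uniform-in-$\hat{x}$ control of the remainder after differentiation, i.e.\ justifying that the far-field expansions of $\phi$ and $\psi$ may be differentiated term by term with error still $\mathcal{O}(|x|^{-3/2})$; this is the standard differentiated asymptotics for radiating Helmholtz solutions and is the only place where a genuine estimate, rather than a direct computation, is needed.
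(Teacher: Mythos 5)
Your proposal is correct and follows the same overall reduction as the paper --- identify $\boldsymbol{v}_{\rm p}=\nabla\phi$ and $\boldsymbol{v}_{\rm s}=\mathbf{curl}\,\psi$, then read off the leading far-field term of the gradient --- but the two arguments diverge at the key analytic step. The paper never differentiates the asymptotic expansion of $\phi$ itself: it represents $\phi$ by Green's formula over $\Gamma_D$, moves the gradient onto the kernel, computes $\nabla_x\Phi(x,y;\kappa_{\rm p})=-\frac{\mathrm{i}\kappa_{\rm p}}{4}H_1^{(1)}(\kappa_{\rm p}|x-y|)\frac{x-y}{|x-y|}$, and then uses only the large-argument asymptotics of $H_1^{(1)}$ together with $|x-y|=|x|-\hat{x}\cdot y+\mathcal{O}(1/|x|)$; the uniform $\mathcal{O}(|x|^{-3/2})$ control comes for free because $y$ ranges over the compact curve $\Gamma_D$. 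You instead differentiate the scalar far-field expansion term by term and defer the justification to ``standard differentiated asymptotics.'' You correctly isolate this as the only nontrivial point, but as written it is a deferral rather than a proof: an asymptotic expansion cannot in general be differentiated --- nothing in the expansion of $\phi$ alone controls the gradient of the remainder --- and the standard proof of the differentiated version for radiating Helmholtz solutions is precisely the paper's kernel computation. So to make your argument self-contained, replace the appeal by differentiation under the integral sign in Green's representation, which converts your ``genuine estimate'' into elementary Hankel asymptotics. Two smaller remarks: your explicit derivation of $\boldsymbol{v}_{\rm p}=\nabla\phi$ and $\boldsymbol{v}_{\rm s}=\mathbf{curl}\,\psi$ from the definitions of the wave components is a welcome addition that the paper merely asserts; and your sign bookkeeping for the shear part, $(\hat{x}_2,-\hat{x}_1)^\top=-\hat{x}^\perp$ under the paper's convention $(a,b)^\perp=(-b,a)$, is consistent with the paper's identity $\mathbf{curl}_x\Phi(x,y;\kappa_{\rm s})=\frac{\mathrm{i}\kappa_{\rm s}}{4}H_1^{(1)}(\kappa_{\rm s}|x-y|)\frac{(x-y)^\perp}{|x-y|}$, so the minus sign in $\boldsymbol{v}_{\rm s}^\infty=-\mathrm{i}\kappa_{\rm s}\psi_\infty\hat{x}^\perp$ is indeed right.
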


\begin{proof}
It follows from Green's formula in \cite[Theorem 2.5]{DR-book2013}
that we have 
	\begin{align*}
	&\phi(x)=\int_{\Gamma_D}\left\{\phi(y)\frac{\partial\Phi(x,y;\kappa_{\rm
p})}{\partial\nu(y)} - \frac{\partial\phi}{\partial\nu}(y)\Phi(x,y;\kappa_{\rm
p})\right\}\mathrm{d}s(y), \quad x\in\mathbb{R}^2\setminus\overline{D}.
	\end{align*}
	The corresponding far-field pattern is
	\begin{align*}
	&\phi_{\infty}(\hat{x})=\gamma_{\kappa_{\rm
p}}\int_{\Gamma_D}\left\{\phi(y)\frac{\partial\mathrm{e}^{-\mathrm{i}\kappa_{\rm
p}\hat{x}\cdot
y}}{\partial\nu(y)}-\frac{\partial\phi}{\partial\nu}(y)\mathrm{e}^{-\mathrm{i}
\kappa_{\rm p}\hat{x}\cdot y}\right\}\mathrm{d}s(y), 
	\end{align*}
	where $\gamma_{\kappa_{\rm p}}=
{\mathrm{e}^{\mathrm{i}\pi/4}}/{\sqrt{8\kappa_{\rm p}\pi}}$. By the
Helmholtz decomposition, the compressional wave can be represented by 
	\begin{equation}\label{gradient_phi}
	\boldsymbol{v}_{\rm p}(x)=
\nabla_x\phi(x)=\int_{\Gamma_D}\left\{\phi(y)\frac{\partial}{\partial\nu(y)}
\big(\nabla_x\Phi(x,y;\kappa_{\rm p})\big)-\frac{\partial\phi}{\partial\nu}(y)
\nabla_x\Phi(x,y;\kappa_{\rm p})\right\}\mathrm{d}s(y).
	\end{equation}
	Using straight forward calculations and noting
$H_1^{(1)}=-{H_0^{(1)}}'$, we obtain 
	$$
	\nabla_x\Phi(x,y;\kappa_{\rm p})=\frac{-\mathrm{i}\kappa_{\rm
p}}{4}H_1^{(1)}(\kappa_{\rm p}|x-y|)\frac{x-y}{|x-y|}.
	$$
	With the help of the asymptotic behavior of the Hankel
functions \cite[eqn. (3.82)]{DR-book2013}
	$$
	H_n^{(1)}(t)=\sqrt{\frac{2}{\pi
t}}\rm{exp}\bigg\{\mathrm{i}(t-\frac{n\pi}{2} -\frac{\pi}{4})\bigg\}
\left\{1+\mathcal{O}\left(\frac{1}{t}\right)\right\}
	$$
	and
	$$
	|x-y|=|x|-\hat{x}\cdot y+\mathcal{O}\left(\frac{1}{|x|}\right),
	$$
	we derive
	$$
	\nabla_x\Phi(x,y;\kappa_{\rm p})=\frac{\mathrm{e}^{\mathrm{i}\kappa_{\rm
p}|x|}}{\sqrt{|x|}}
	\left\{\mathrm{i}\gamma_{\kappa_{\rm p}}\kappa_{\rm
p}\mathrm{e}^{-\mathrm{i} \kappa_{\rm p}\hat{x}\cdot
y}\hat{x}+\mathcal{O}\left(\frac{1}{|x|}\right)\right\}
	$$
	and
	$$
	\frac{\partial}{\partial\nu(y)}\nabla_x\Phi(x,y;\kappa_{\rm
p})=\frac{\mathrm{e}^{\mathrm{i}\kappa_{\rm p}|x|}}{\sqrt{|x|}}
	\left\{\mathrm{i}\gamma_{\kappa_{\rm p}}\kappa_{\rm
p}\frac{\partial\mathrm{e}^{-\mathrm{i}\kappa_{\rm p}\hat{x}\cdot
y}}{\partial\nu(y)}\hat{x}+\mathcal{O}\left(\frac{1}{|x|}\right)\right\},
	$$
	Substituting the last two equations into \eqref{gradient_phi} yields 
	$$
	\boldsymbol{v}_{\rm p}=\frac{\mathrm{e}^{\mathrm{i}\kappa_{\rm
p}|x|}}{\sqrt{|x|}}
	\left\{\mathrm{i}\kappa_{\rm p}\phi_\infty\hat{x}
+\mathcal{O}\left(\frac{1}{|x|}\right)\right\}.
	$$
	
	Similarly, by noting $\boldsymbol{v}_{\rm s}=\boldsymbol{\rm
curl}_x\psi$ and
	$$
	\boldsymbol{\rm curl}_x\Phi(x,y;\kappa_{\rm
s})=\frac{\mathrm{i}\kappa_{\rm s}}{4}H_1^{(1)}(\kappa_{\rm
s}|x-y|)\frac{(x-y)^\perp}{|x-y|},
	$$ 
	we can obtain that
	$$
	\boldsymbol{v}_{\rm s}=\frac{\mathrm{e}^{\mathrm{i}\kappa_{\rm
s}|x|}}{\sqrt{|x|}}
	\left\{-\mathrm{i}\kappa_{\rm s}\psi_\infty\hat{x}^\perp
+\mathcal{O}\left(\frac{1}{|x|}\right)\right\},
	$$
	which completes the proof.
\end{proof}

In view of \eqref{behaviour relation}, we see that if $(a_{\rm
p}\boldsymbol{v}_{\rm
p}^\infty, a_{\rm s}\boldsymbol{v}_{\rm s} ^\infty)$ or $(a_{\rm
p}|\boldsymbol{v}_{\rm p}^\infty|, a_{\rm s}|\boldsymbol{v}_{\rm s} ^\infty|)$
is
known, then  the information of far-field pattern $(a_{\rm p}\phi_\infty,
a_{\rm s}\psi_\infty)$ or $(a_{\rm p}|\phi_\infty|, a_{\rm s}|\psi_\infty|)$
can be obtained. Hence, we may reconstruct the obstacle from the knowledge of
$(a_{\rm p}\phi_\infty, a_{\rm s}\psi_\infty)$ and $(a_{\rm p}|\phi_\infty|,
a_{\rm s}|\psi_\infty|)$ in Problem \ref{problem_1} and Problem \ref{problem_2},
respectively.

The following result show the translation invariance property of the phaseless
compressional and shear far-field patterns.

\begin{theorem}\label{Th2}
Assume that $\phi_\infty, \psi_\infty$ are the far-field patterns of the
scattered waves $\phi, \psi$ with incident plane wave $\boldsymbol{u}^{\rm
inc}(x)=a^{\rm inc}_{\rm p}d\mathrm{e}^{{\rm i} \kappa_{\rm p}d\cdot x}+a^{\rm
inc}_{\rm s}d^\perp \mathrm{e}^{{\rm i} \kappa_{\rm s}d\cdot x}$, where $(a^{\rm
inc}_{\rm p}, a^{\rm inc}_{\rm s})=(1,0)$ for the compressional incident plane
wave and $(a^{\rm inc}_{\rm p}, a^{\rm inc}_{\rm s})=(0,1)$ for the shear
incident plane wave. For the shifted domain $D_h:=\{x+h: x\in D\}$ with a fixed
vector $h\in\mathbb{R}^2$, the far-field patterns $\phi_\infty^h, \psi_\infty^h$
satisfy the relations
\begin{align}\label{invarance1}
\phi_\infty^h(\hat{x})=\mathrm{e}^{\mathrm{i}\kappa_{\rm p}
(d-\hat{x})\cdot h} \phi_\infty(\hat{x}), \quad
\psi_\infty^h(\hat{x})=\mathrm{e}^{\mathrm{i}(\kappa_{\rm
p}d-\kappa_{\rm s} \hat{x})\cdot h}\psi_\infty(\hat{x}),\quad (a^{\rm inc}_{\rm
p}, a^{\rm inc}_{\rm s})=(1,0)
\end{align}
and
 \begin{align}\label{invarance2}
\psi_\infty^h(\hat{x})=\mathrm{e}^{\mathrm{i}\kappa_{\rm s}
(d-\hat{x})\cdot h}\psi_\infty(\hat{x}), \quad
\phi_\infty^h(\hat{x})=\mathrm{e}^{\mathrm{i}(\kappa_{\rm
s}d-\kappa_{\rm p}\hat{x})\cdot h}\phi_\infty(\hat{x}),\quad (a^{\rm inc}_{\rm
p}, a^{\rm inc}_{\rm s})=(0,1).
\end{align}
\end{theorem}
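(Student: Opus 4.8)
The plan is to reduce the shifted scattering problem to the original one by a change of variables, and then read off the phase factors from the far-field asymptotics. Write $\boldsymbol v^h$ for the scattered field produced by $D_h$ under the same incident wave $\boldsymbol u^{\rm inc}$, and set $\tilde{\boldsymbol v}(x):=\boldsymbol v^h(x+h)$. Since the Navier operator has constant coefficients it is invariant under translation, so $\tilde{\boldsymbol v}$ solves the Navier equation in $\mathbb R^2\setminus\overline D$ and satisfies the Kupradze--Sommerfeld radiation condition. On $\Gamma_D$ one has $\tilde{\boldsymbol v}(x)=\boldsymbol v^h(x+h)=-\boldsymbol u^{\rm inc}(x+h)$, because $x+h\in\Gamma_{D_h}$. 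The whole argument then hinges on identifying this boundary datum as a scalar multiple of the datum for the unshifted problem.

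First I would treat the compressional incidence $(a^{\rm inc}_{\rm p},a^{\rm inc}_{\rm s})=(1,0)$, for which $\boldsymbol u^{\rm inc}(x+h)=d\,\mathrm e^{\mathrm i\kappa_{\rm p}d\cdot(x+h)}=\mathrm e^{\mathrm i\kappa_{\rm p}d\cdot h}\boldsymbol u^{\rm inc}(x)$, so the Dirichlet data of $\tilde{\boldsymbol v}$ is exactly $\mathrm e^{\mathrm i\kappa_{\rm p}d\cdot h}$ times that of $\boldsymbol v$ in \eqref{scatteredfield}. By the uniqueness of the exterior Dirichlet problem \eqref{scatteredfield} for the Navier equation, this forces $\tilde{\boldsymbol v}=\mathrm e^{\mathrm i\kappa_{\rm p}d\cdot h}\boldsymbol v$, that is, $\boldsymbol v^h(x)=\mathrm e^{\mathrm i\kappa_{\rm p}d\cdot h}\boldsymbol v(x-h)$ for all $x\in\mathbb R^2\setminus\overline{D_h}$. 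The shear case $(0,1)$ is identical except that the scalar prefactor becomes $\mathrm e^{\mathrm i\kappa_{\rm s}d\cdot h}$, since the shear plane wave carries the phase $\mathrm e^{\mathrm i\kappa_{\rm s}d\cdot x}$.

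Finally I would push the identity $\boldsymbol v^h(x)=\mathrm e^{\mathrm i\kappa_{\rm p}d\cdot h}\boldsymbol v(x-h)$ through to the far field. Substituting $x-h$ into the asymptotic expansion of Theorem \ref{Th1} and using the standard estimates $|x-h|=|x|-\hat x\cdot h+\mathcal O(1/|x|)$ and $\widehat{x-h}\to\hat x$, the compressional term acquires the factor $\mathrm e^{-\mathrm i\kappa_{\rm p}\hat x\cdot h}$ and the shear term the factor $\mathrm e^{-\mathrm i\kappa_{\rm s}\hat x\cdot h}$; combining with the prefactor yields $\boldsymbol v_{\rm p}^{\infty,h}(\hat x)=\mathrm e^{\mathrm i\kappa_{\rm p}(d-\hat x)\cdot h}\boldsymbol v_{\rm p}^\infty(\hat x)$ and $\boldsymbol v_{\rm s}^{\infty,h}(\hat x)=\mathrm e^{\mathrm i(\kappa_{\rm p}d-\kappa_{\rm s}\hat x)\cdot h}\boldsymbol v_{\rm s}^\infty(\hat x)$. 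Reading \eqref{behaviour relation} as $\boldsymbol v_{\rm p}^\infty=\mathrm i\kappa_{\rm p}\phi_\infty\hat x$ and $\boldsymbol v_{\rm s}^\infty=-\mathrm i\kappa_{\rm s}\psi_\infty\hat x^\perp$, the common vector factors $\mathrm i\kappa_{\rm p}\hat x$ and $-\mathrm i\kappa_{\rm s}\hat x^\perp$ cancel, giving the scalar relations \eqref{invarance1}; replacing the prefactor by $\mathrm e^{\mathrm i\kappa_{\rm s}d\cdot h}$ produces \eqref{invarance2} in the shear case. The only genuinely delicate point is the uniqueness step that licenses $\tilde{\boldsymbol v}=\mathrm e^{\mathrm i\kappa_{\rm p}d\cdot h}\boldsymbol v$; everything afterward is a bookkeeping of phases in the far-field expansion, and one could equivalently carry out the argument on the scalar potentials $\phi,\psi$ directly via the uniqueness of the Helmholtz decomposition.
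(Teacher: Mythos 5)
Your proof is correct, but it takes a genuinely different route from the paper. You argue at the PDE level: set $\tilde{\boldsymbol v}(x)=\boldsymbol v^h(x+h)$, use translation invariance of the Navier operator and of the Kupradze--Sommerfeld radiation condition, identify the boundary datum as $\mathrm e^{\mathrm i\kappa_{\rm p}d\cdot h}$ (resp.\ $\mathrm e^{\mathrm i\kappa_{\rm s}d\cdot h}$) times the original one, and invoke uniqueness of the exterior Dirichlet problem for the Navier equation to get $\boldsymbol v^h(x)=\mathrm e^{\mathrm i\kappa_{\rm p}d\cdot h}\boldsymbol v(x-h)$, after which the phase factors fall out of the far-field asymptotics and Theorem \ref{Th1}. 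The paper instead works entirely inside its boundary-integral framework: it represents $\phi,\psi$ as single-layer potentials, verifies by the substitution $\tilde x=x-h$, $\tilde y=y-h$ that the phase-multiplied translated densities $g_j^h(x)=\mathrm e^{\mathrm i\kappa_{\rm p}d\cdot h}g_j(x-h)$ solve the shifted system \eqref{boundaryIE_phi}--\eqref{boundaryIE_psi}, appeals to unique solvability of that coupled BIE system (cited from \cite{LaiLi}), and then reads the factors $\mathrm e^{\mathrm i\kappa_{\rm p}(d-\hat x)\cdot h}$ and $\mathrm e^{\mathrm i(\kappa_{\rm p}d-\kappa_{\rm s}\hat x)\cdot h}$ directly off the explicit far-field representations \eqref{singlelayer_far}. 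Your argument is the more elementary and more robust one: it needs no integral-equation ansatz (so it is untroubled by possible irregular frequencies of the single-layer representation) and would transfer verbatim to other translation-equivariant boundary conditions; the only classical input is exterior uniqueness for the Navier equation. The paper's version buys something different: the explicit density transformation \eqref{density_invar} lives in exactly the discretized quantities used by its Nystr\"om scheme and nonlinear integral equations, so the invariance is established in the same language as the reconstruction algorithm. Two small points worth a sentence each in a polished write-up: (i) justify that translation preserves the radiation condition (it does, since $\boldsymbol v_{\rm p},\boldsymbol v_{\rm s}$ are obtained from $\boldsymbol v$ by constant-coefficient differential operators that commute with translation); and (ii) when you match the two oscillatory terms $\mathrm e^{\mathrm i\kappa_{\rm p}|x|}$ and $\mathrm e^{\mathrm i\kappa_{\rm s}|x|}$ to identify $\boldsymbol v_{\rm p}^{\infty,h}$ and $\boldsymbol v_{\rm s}^{\infty,h}$ separately, note that $\kappa_{\rm p}\neq\kappa_{\rm s}$ (guaranteed by $\lambda+\mu>0$), or avoid the issue entirely by applying the projections $\boldsymbol v_{\rm p}=-\kappa_{\rm p}^{-2}\nabla\nabla\cdot\boldsymbol v$ and $\boldsymbol v_{\rm s}=\kappa_{\rm s}^{-2}{\bf curl}\,{\rm curl}\,\boldsymbol v$ before passing to the asymptotics.
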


\begin{proof}
We only give the proof for the compressional incident plane wave case,
i.e., \eqref{invarance1} for $(a^{\rm inc}_{\rm p}, a^{\rm inc}_{\rm s})=(1,0)$,
since the other case \eqref{invarance2} for $(a^{\rm inc}_{\rm p}, a^{\rm
inc}_{\rm s})=(0,1)$ can be proved similarly. 

We assume that the solution of \eqref{HelmholtzDec} is given as 
single-layer potentials with densities $g_1, g_2$:
\begin{align}
\phi(x)=\int_{\Gamma_D}\Phi(x,y;\kappa_{\rm p})g_1(y)\mathrm{d}s(y), \quad 
\psi(x)=\int_{\Gamma_D}\Phi(x,y;\kappa_{\rm s})g_2(y)\mathrm{d}s(y), \quad 
x\in\mathbb{R}^2\setminus\Gamma_D. \label{singlelayer}
\end{align}
Letting $x\in\mathbb{R}^2\setminus\overline{D}$ approach the boundary $\Gamma_D$
in \eqref{singlelayer}, and using the jump relation of single-layer potentials
and the boundary condition of \eqref{HelmholtzDec}, we deduce for
$x\in\Gamma_D$ that 
\begin{align}\label{boundaryIE_phi}
-\frac{1}{2}g_1(x)+\int_{\Gamma_D}\frac{\partial\Phi(x,y;\kappa_{\rm p})}
{\partial\nu(x)}g_1(y)\mathrm{d}s(y)+\int_{\Gamma_D}
\frac{\partial\Phi(x,y;\kappa_{\rm s})}
{\partial\tau(x)}g_2(y)\mathrm{d}s(y)=-\nu(x)\cdot\boldsymbol{u}^{\rm inc}(x)
\end{align}
and
\begin{align}\label{boundaryIE_psi}
\int_{\Gamma_D}\frac{\partial\Phi(x,y;\kappa_{\rm p})}
{\partial\tau(x)}g_1(y)\mathrm{d}s(y)+ \frac{1}{2}g_2(x)-\int_{\Gamma_D}
\frac{\partial\Phi(x,y;\kappa_{\rm s})}
{\partial\nu(x)}g_2(y)\mathrm{d}s(y)=-\tau(x)\cdot\boldsymbol{u}^{\rm inc}(x).
\end{align}
The corresponding far-field patterns can be represented as follows 
\begin{align} \label{singlelayer_far}
\phi_\infty(\hat{x})=\gamma_{\kappa_{\rm
p}}\int_{\Gamma_D}\mathrm{e}^{-\mathrm{i}\kappa_{\rm p} \hat{x}\cdot y}
g_1(y)\mathrm{d}s(y), \quad
\psi_\infty(\hat{x})=\gamma_{\kappa_{\rm s}}\int_{\Gamma_D}
\mathrm{e}^{-\mathrm{i}\kappa_{\rm s} \hat{x}\cdot y} g_2(y)\mathrm{d}s(y),
\quad\hat{x}\in\Omega. 
\end{align}

Furthermore, we assume that the densities $g_1^h, g_2^h$ solve the boundary
integral equations \eqref{boundaryIE_phi}--\eqref{boundaryIE_psi} with
$\Gamma_D$ replaced by $\Gamma_{D_h}$. We now show that if $g_1, g_2$
solve \eqref{boundaryIE_phi}--\eqref{boundaryIE_psi}, then
\begin{equation}  \label{density_invar}
g_1^h(x)=\mathrm{e}^{\mathrm{i}\kappa_{\rm p}d\cdot h}g_1(x-h), \qquad
g_2^h(x)=\mathrm{e}^{\mathrm{i}\kappa_{\rm p}d\cdot h}g_2(x-h)
\end{equation} 
also solve the boundary integral equations
\eqref{boundaryIE_phi}--\eqref{boundaryIE_psi} with $\Gamma_D$ replaced by
$\Gamma_{D_h}$. In fact, substituting above equations into the left side of 
\eqref{boundaryIE_phi}--\eqref{boundaryIE_psi} with $\Gamma_D$ replaced by
$\Gamma_{D_h}$ and setting $\tilde{x}=x-h$, $\tilde{y}=y-h$, we get for
$x\in\partial D_h$ that 
\begin{align*}
&-\frac{1}{2}g_1^h(x)+\int_{\Gamma_{D_h}}\frac{\partial\Phi(x,y;\kappa_{\rm p})}
{\partial\nu(x)}g_1^h(y)\mathrm{d}s(y)+\int_{\Gamma_{D_h}}\frac{\partial\Phi(x,
y;\kappa_{\rm s})} {\partial\tau(x)}g_2^h(y)\mathrm{d}s(y) \\
=&\mathrm{e}^{\mathrm{i}\kappa_{\rm p}d\cdot h}
\left(-\frac{1}{2}g_1^h(\tilde{x})+\int_{\Gamma_D}
\frac{\partial\Phi(\tilde{x},\tilde{y};\kappa_{\rm p})}
{\partial\nu(\tilde{x})}g_1^h(\tilde{y})\mathrm{d}s(\tilde{y})+\int_{\Gamma_D}
\frac{\partial\Phi(\tilde{x},\tilde{y};\kappa_{\rm s})}
{\partial\tau(\tilde{x})}g_2^h(\tilde{y})\mathrm{d}s(\tilde{y})\right) \\
=&\mathrm{e}^{\mathrm{i}\kappa_{\rm p}d\cdot h}
\big(-\nu(\tilde{x})\cdot\boldsymbol{u}^{\rm inc}(\tilde{x})\big)
=-\nu(x)\cdot\boldsymbol{u}^{\rm inc}(x).
\end{align*}
Similarly, \eqref{boundaryIE_psi} can be handled in
the same way. Thus, the relation \eqref{density_invar} follows from the fact
that the system of boundary integral
equations \eqref{boundaryIE_phi}--\eqref{boundaryIE_psi} for $D_h$ has a unique
solution \cite{LaiLi}.

Combining \eqref{singlelayer_far} and \eqref{density_invar}, we obtain 
\begin{align*}
\phi_\infty^h(\hat{x})&=\gamma_{\kappa_{\rm
p}}\int_{\Gamma_{D_h}}\mathrm{e}^{-\mathrm{i}\kappa_{\rm p} \hat{x}\cdot
y}g_1^h(y)\mathrm{d}s(y) \\
&=\gamma_{\kappa_{\rm p}}\int_{\Gamma_{D_h}} \mathrm{e}^{-\mathrm{i}\kappa_{\rm
p} \hat{x}\cdot(y-h)} \mathrm{e}^{-\mathrm{i}\kappa_{\rm p}\hat{x}\cdot
h}\mathrm{e}^{\mathrm{i}\kappa_{\rm p} d\cdot h} g_1(y-h)\mathrm{d}s(y) 
=\mathrm{e}^{\mathrm{i}\kappa_{\rm p} (d-\hat{x})\cdot h}\phi_\infty(\hat{x})
\end{align*}
and
\begin{align*}
\psi_\infty^h(\hat{x})&=\gamma_{\kappa_{\rm s}} \int_{\Gamma_{D_h}}
\mathrm{e}^{-\mathrm{i}\kappa_{\rm s} \hat{x}\cdot y}g_2^h(y)\mathrm{d}s(y) \\
&=\gamma_{\kappa_{\rm s}} \int_{\Gamma_{D_h}}\mathrm{e}^{-\mathrm{i}\kappa_{\rm
s} \hat{x}\cdot(y-h)} \mathrm{e}^{-\mathrm{i}\kappa_{\rm s}
\hat{x}\cdot h}\mathrm{e}^{\mathrm{i}\kappa_{\rm p} d\cdot h}
g_2(y-h)\mathrm{d}s(y)
=\mathrm{e}^{\mathrm{i}(\kappa_{\rm p}d-\kappa_{\rm s} \hat{x})\cdot h}
\psi_\infty(\hat{x}),
\end{align*}
which completes the proof. 
\end{proof}

Theorem \ref{Th2} implies that the compressional and shear far-field patterns
are invariant under translations of the obstacle $D$ for the compressional or
shear plane incident wave.

\section{Nystr\"{o}m-type discretization for boundary integral equations}

In this section, we present a Nystr\"{o}m-type discretization to solve
the coupled system \eqref{boundaryIE_phi}--\eqref{boundaryIE_psi} . We first
introduce
the single-layer integral operator
$$
(S_\kappa g)(x)=2\int_{\Gamma_D}\Phi(x,y;\kappa)g(y)\mathrm{d}s(y), \quad
x\in\Gamma_D,
$$
and the corresponding far-field integral operator
$$
(S_\kappa^\infty
g)(\hat{x})=\gamma_\kappa\int_{\Gamma_D}\mathrm{e}^{-\mathrm{i}\kappa
\hat{x}\cdot y}g(y)\mathrm{d}s(y), \quad\hat{x}\in\Omega.
$$
In addition, we need to introduce the normal derivative boundary integral
operator
$$
(K_\kappa g)(x)=2\int_{\Gamma_D}\frac{\partial\Phi(x,y;\kappa)}
{\partial\nu(x)}g(y)\mathrm{d}s(y), \qquad x\in\Gamma_D,
$$
and the tangential derivative boundary integral operator
$$
(H_\kappa g)(x)=2\int_{\Gamma_D}\frac{\partial\Phi(x,y;\kappa)}
{\partial\tau(x)}g(y)\mathrm{d}s(y), \qquad x\in\Gamma_D.
$$
Then, the coupled boundary integral equations
\eqref{boundaryIE_phi}--\eqref{boundaryIE_psi} can be rewritten in the operator
form
\begin{align}
-g_1+K_{\kappa_{\rm p}}g_1+H_{\kappa_{\rm s}} g_2=2f_1,\label{direct field
eqn1}\\ 
H_{\kappa_{\rm p}}g_1+g_2-K_{\kappa_{\rm s}}g_2=2f_2. \label{direct field
eqn2}
\end{align}
The corresponding far-field patterns of \eqref{singlelayer_far} can be
represented as follows 
\begin{align*}
\phi_\infty(\hat{x})=(S_{\kappa_{\rm p}}^\infty g_1)(\hat{x}), \quad
\psi_\infty(\hat{x})=(S_{\kappa_{\rm s}}^\infty g_2)(\hat{x}),
\qquad\hat{x}\in\Omega. 
\end{align*}

\subsection{Parametrization} 

For simplicity, the boundary $\Gamma_D$ is assumed to be a star-shaped curve
with the parametric form
\begin{equation*}
\Gamma_D=\{p(\hat{x})=c+r(\hat{x})\hat{x}: ~c=(c_1,c_2)^\top,\
\hat{x}\in\Omega\},
\end{equation*} 
where $\Omega=\{\hat{x}(t)=(\cos t, \sin t)^\top: ~0\leq t< 2\pi\}$.
We introduce the parameterized integral operators which are still denoted by
$S_\kappa$, $S_\kappa^\infty$, $K_\kappa$, and $H_\kappa$ for convenience, i.e.,
\begin{align*}
 \big(S_\kappa(p,\varphi_j)\big)(t)&=\frac{\mathrm{i}}{2}\int_0^{2\pi}
H_0^{(1)}(\kappa|p(t)-p(\varsigma)|)\varphi_j(\varsigma)\mathrm{d}\varsigma,\\
\big(S^\infty_\kappa(p,\varphi_j)\big)(t)&=\gamma_{\kappa}\int_0^{2\pi}\mathrm{e
} ^{-\mathrm{i}\kappa\hat{x}(t)\cdot
p(\varsigma)}\varphi_j(\varsigma)\mathrm{d}\varsigma,\\
\big(K_\kappa(p,\varphi_j)\big)(t)&=\frac{1}{G(t)}\int_0^{2\pi}\widetilde
K(t,\varsigma;\kappa)\varphi_j(\varsigma)\mathrm{d}\varsigma,\\
\big(H_\kappa(p,\varphi_j)\big)(t)&=\frac{1}{G(t)}\int_0^{2\pi}  \widetilde
H(t,\varsigma;\kappa)\varphi_j(\varsigma)\mathrm{d}\varsigma,
\end{align*}
where $\varphi_j(\varsigma)=G(\varsigma)g_j(p(\varsigma))$, $j=1,
2$, $G(\varsigma):=|p'(\varsigma)|=\sqrt{(r'(\varsigma))^2+r^2(\varsigma)}$
is the Jacobian of the transformation,
\begin{align*}
\widetilde K(t,\varsigma;\kappa)&=\frac{\mathrm{i}\kappa}{2}n(t)\cdot[
p(\varsigma)-p(t)] \frac{H_1^{(1)}(\kappa|p(t)-p(\varsigma)|)}{
|p(t)-p(\varsigma)|},\\
\widetilde H(t,\varsigma;\kappa)&=\frac{\mathrm{i}\kappa}{2}n(t)^\perp\cdot[
p(\varsigma)-p(t)] \frac{H_1^{(1)}(\kappa|p(t)-p(\varsigma)|)}{
|p(t)-p(\varsigma)|},
\end{align*}
and 
\begin{align*}
 n(t)=\nu(x(t))|p'(t)|=\big(p'_2(t), -p'_1(t)\big),\quad
 n(t)^\perp=\tau(x(t))|p'(t)|=\big(p'_1(t), p'_2(t)\big).
\end{align*}
Hence, equations \eqref{direct field eqn1}--\eqref{direct field eqn2}
can be reformulated as the parametrized integral equations
\begin{align}
-\varphi_1 + K_{\kappa_{\rm p}}(p,\varphi_1)G + H_{\kappa_{\rm
s}}(p,\varphi_2)G&=w_1, \label{direct parafield eqn1}\\
\varphi_2+H_{\kappa_{\rm p}}(p,\varphi_1)G
-K_{\kappa_{\rm s}}(p,\varphi_2)G&=w_2, \label{direct parafield eqn2}
\end{align}
where $w_j=2(f_j\circ p)G$, $j=1, 2$.

\subsection{Discretization}

We adopt the Nystr\"{o}m method for the discretization of the boundary
integrals. We refer to \cite{Kress-JCAM1995} for an application of the
Nystr\"{o}m method to solve the acoustic wave scattering problem by using a
hypersingular integral equation. 

The kernel $\widetilde{K}$ of the parameterized normal derivative integral
operator can be written in the form of
$$
\widetilde{K}(t,\varsigma;\kappa)=\widetilde{K}_1(t,
\varsigma;\kappa)\ln\Big(4\sin^2\frac{t-\varsigma}{2}\Big)+\widetilde{K}_2(t,
\varsigma;\kappa),
$$
where
\begin{align*}
\widetilde K_1(t,\varsigma;\kappa)&=
\frac{\kappa}{2\pi}n(t)\cdot\big[p(t)-p(\varsigma)\big]\frac{
J_1(\kappa|p(t)-p(\varsigma)|)}{|p(t)-p(\varsigma)|}, \\
\widetilde K_2(t,\varsigma;\kappa)&=\widetilde K(t,\varsigma;\kappa)-\widetilde
K_1(t,\varsigma;\kappa)\ln\Big(4\sin^2\frac{t-\varsigma}{2}\Big).
\end{align*}
It can be shown that the diagonal terms are
$$
\widetilde K_1(t,t;\kappa)=0, \qquad \widetilde K_2(t,t;\kappa)=
\frac{1}{2\pi}\frac{n(t)\cdot p''(t)}{|p'(t)|^2}.
$$

Noting $H_1^{(1)}=J_1+\mathrm{i}Y_1$, where $J_1$ and
$Y_1$ are the Bessel and Neumann functions of order one, and using the power
series \cite[eqns. (3.74) and (3.75)]{DR-book2013}
\begin{align*}
J_1(z)&:=\sum_{k=0}^{\infty}\frac{(-1)^k}{k!(k+1)!}\Big(\frac{z}{2}\Big)^{2k+1}
\\
Y_1(z)&:=\frac{2}{\pi}\Big\{\ln\frac{z}{2}+C\Big\}J_1(z)-\frac{2}{\pi}\frac{1}{z
}-\frac{1}{\pi}\sum_{k=0}^{\infty}\frac{(-1)^k}{k!(k+1)!}
\Big\{\psi(k+1)+\psi(k)\Big\}\Big(\frac{z}{2}\Big)^{2k+1}
\end{align*}
where $\psi(k)=\sum_{m=1}^k\frac{1}{m}$ with definition $\psi(0)=0$
and Euler's constant $C=0.57721\cdots$, we can split the kernel $\widetilde{H}$
of the parameterized tangential derivative integral operator into 
$$
\widetilde{H}(t,\varsigma;\kappa)=\widetilde{H}_1(t,\varsigma;\kappa)\frac{1}{
\sin(\varsigma-t)}+\widetilde{H}_2(t,\varsigma;\kappa)\ln\Big(4\sin^2\frac{
t-\varsigma}{2}\Big)+\widetilde{H}_3(t,\varsigma;\kappa),
$$
where the functions
\begin{align*}
\widetilde H_1(t,\varsigma;\kappa)&=
\frac{1}{\pi}n(t)^\perp\cdot\big[p(\varsigma)-p(t)\big]\frac{\sin(\varsigma-t)}{
|p(t)-p(\varsigma)|^2}, \\
\widetilde H_2(t,\varsigma;\kappa)&=
\frac{\kappa}{2\pi}n(t)^\perp\cdot\big[p(t)-p(\varsigma)\big]\frac{
J_1(\kappa|p(t)-p(\varsigma)|)}{|p(t)-p(\varsigma)|},\\
\widetilde H_3(t,\varsigma;\kappa)&=\widetilde H(t,\varsigma;\kappa)-\widetilde
H_1(t,\varsigma;\kappa)\frac{1}{\sin(\varsigma-t)}-\widetilde
H_2(t,\varsigma;\kappa)\ln\Big(4\sin^2\frac{t-\varsigma}{2}\Big)
\end{align*}
are analytic with diagonal entries given by 
$$
\widetilde H_1(t,t;\kappa)=\frac{1}{\pi}, \quad \widetilde H_2(t,t;\kappa)=0,
\quad \widetilde H_3(t,t;\kappa)=0.
$$

Let $\varsigma_j^{(n)}:=\pi j/n$, $j=0,\cdots,2n-1$ be an equidistant set of
quadrature points. For the singular integrals, we employ the following
quadrature rules via the trigonometric interpolation
\begin{align}
&\int_{0}^{2\pi}\ln\Big(4\sin^2\frac{t-\varsigma}{2}\Big)f(\varsigma)\mathrm{d
} \varsigma\approx\sum_{j=0}^{2n-1}R_j^{(n)}(t)f(\varsigma_j^{(n)}),
\label{quadrature1} \\
&\int_{0}^{2\pi}\frac{f(\varsigma)}{\sin(\varsigma-t)}\mathrm{d}
\varsigma\approx\sum_{j=0}^{2n-1}T_j^{(n)}(t)f(\varsigma_j^{(n)}),
\label{quadrature2}
\end{align}
where the quadrature weights are given by
\begin{align}\label{weight_R}
R_j^{(n)}(t)=-\frac{2\pi}{n}\sum_{m=1}^{n-1}\frac{1}{m}\cos\Big[
m(t-\varsigma_j^{(n)})\Big]
-\frac{\pi}{n^2}\cos\Big[n(t-\varsigma_j^{(n)})\Big] 
\end{align}
and
\begin{align*}
T_j^{(n)}(t)=
\begin{cases} \displaystyle 
-\frac{2\pi}{n}\sum_{m=0}^{(n-3)/2}\sin\Big[ (2m+1)(t-\varsigma_j^{(n)})\Big]
-\frac{\pi}{n}\sin\Big[n(t-\varsigma_j^{(n)})\Big], ~ &n=1,3,5,\cdots, \\
\displaystyle
-\frac{2\pi}{n}\sum_{m=0}^{n/2-1}\sin\Big[ (2m+1)(t-\varsigma_j^{(n)})\Big],
~ &n=2,4,6,\cdots.
\end{cases}
\end{align*}
Using the Lagrange basis for the trigonometric interpolation \cite[eqn.
(11.12)]{Kress-book2014}), we derive the weight $T_j^{(n)}$ 
by calculating the integrals
\begin{align*}
&\int_{0}^{2\pi}\frac{\cos k\varsigma}{\sin \varsigma}\mathrm{d}\varsigma=0,
\quad k=0,1,2,\cdots, \quad
&\int_{0}^{2\pi}\frac{\sin k\varsigma}{\sin \varsigma}\mathrm{d}\varsigma=
\begin{cases}
2\pi, ~ &k=1,3,5\cdots, \\
0,    ~ &k=2,4,6,\cdots,
\end{cases}
\end{align*} 
in the sense of Cauchy principal value. The details of \eqref{weight_R}
can be found in \cite{Kress-book2014}. For the smooth integrals, we simply use
the trapezoidal rule 
\begin{align} \label{traperule}
\int_{0}^{2\pi}f(\varsigma)\mathrm{d}\varsigma\approx\frac{\pi}{n}\sum_{j=0}^{
2n-1}f(\varsigma_j^{(n)}).
\end{align}

By \eqref{quadrature1} and \eqref{quadrature2},
the full discretization of \eqref{direct parafield eqn1}--\eqref{direct
parafield eqn2} can be deduced as
\begin{align*}
w_{1,i}^{(n)}= & -\varphi^{(n)}_{1,i}+
\sum_{j=0}^{2n-1}\left(R_{|i-j|}^{(n)}\widetilde
K_1(\varsigma_i^{(n)},\varsigma_j^{(n)};\kappa_{\rm p})+ \frac{\pi}{n}\widetilde
K_2(\varsigma_i^{(n)},\varsigma_j^{(n)};\kappa_{\rm
p})\right)\varphi_{1,j}^{(n)} \\
&+\sum_{j=0}^{2n-1}\left(-T_{i-j}^{(n)}\widetilde
H_1(\varsigma_i^{(n)},\varsigma_j^{(n)};\kappa_{\rm s})+
R_{|i-j|}^{(n)}\widetilde H_2(\varsigma_i^{(n)},\varsigma_j^{(n)};\kappa_{\rm
s})+ \frac{\pi}{n}\widetilde H_3(\varsigma_i^{(n)},\varsigma_j^{(n)};\kappa_{\rm
s})\right)\varphi_{2,j}^{(n)},
\end{align*}
\begin{align*}
w_{2,i}^{(n)}= &\varphi^{(n)}_{2,i}+  \sum_{j=0}^{2n-1}\left(-T_{i-j}^{(n)}
\widetilde H_1(\varsigma_i^{(n)}, \varsigma_j^{(n)};\kappa_{\rm
p})+R_{|i-j|}^{(n)}H_2(\varsigma_i^{(n)},\varsigma_j^{(n)};\kappa_{\rm p})+
\frac{\pi}{n}\widetilde H_3(\varsigma_i^{(n)},\varsigma_j^{(n)};\kappa_{\rm
p})\right)\varphi_{1,j}^{(n)} \\
&- \sum_{j=0}^{2n-1} \left(R_{|i-j|}^{(n)}\widetilde
K_1(\varsigma_i^{(n)},\varsigma_j^{(n)};\kappa_{\rm s})+\frac{\pi}{n}\widetilde
K_2(\varsigma_i^{(n)},\varsigma_j^{(n)};\kappa_{\rm s})\right)
\varphi_{2,j}^{(n)},
\end{align*}
where $w_{l,i}^{(n)}=w_l(\varsigma_i^{(n)})$,
$\varphi_{l,j}^{(n)}=\varphi_l(\varsigma_j^{(n)})$ for $i,j=0,\cdots,2n-1$,
$l=1,2$, and
\begin{align*}
R_j^{(n)}&:=R_j^{(n)}(0)=-\frac{2\pi}{n}\sum_{m=1}^{n-1}\frac{1}{m}\cos\frac{
mj\pi}{n}-\frac{(-1)^j\pi}{n^2}, \\
T_j^{(n)}&:=T_j^{(n)}(0)=
\frac{2\pi}{n}\sum_{m=0}^{\tilde{n}}\sin \frac{(2m+1)j\pi}{n}, \qquad
\tilde{n}=
\begin{cases}
(n-3)/2, &n=1,3,5,\cdots, \\
n/2-1,     &n=2,4,6,\cdots.
\end{cases}
\end{align*}

\section{Reconstruction methods}

In this section, we introduce a system of nonlinear equations and develop
corresponding reconstruction methods for Problem 1 and Problem 2,
respectively.

\subsection{The phased IOSP}

On $\Gamma_D$, it follows from the boundary integral equations
\eqref{direct field eqn1}--\eqref{direct field eqn2} that the field equations
are
\begin{align}
-g_1+K_{\kappa_{\rm p}}g_1+H_{\kappa_{\rm s}} g_2=2f_1, \label{field eqn1}\\ 
H_{\kappa_{\rm p}}g_1+g_2-K_{\kappa_{\rm s}}g_2=2f_2. \label{field eqn2}
\end{align}
The data equation is given by
\begin{align} \label{data eqn}
a_{\rm p}S^\infty_{\kappa_{\rm p}}g_1+a_{\rm s}S^\infty_{\kappa_{\rm
s}}g_2=a_{\rm p}\phi_\infty+a_{\rm s}\psi_\infty.
\end{align}
The field equations and data equation \eqref{field eqn1}--\eqref{data eqn} can
be reformulated as the parametrized integral equations
\begin{align}
-\varphi_1 + K_{\kappa_{\rm p}}(p,\varphi_1)G + H_{\kappa_{\rm
s}}(p,\varphi_2)G&=w_1, \label{parafield eqn1}\\
\varphi_2+H_{\kappa_{\rm p}}(p,\varphi_1)G
-K_{\kappa_{\rm s}}(p,\varphi_2)G&=w_2, \label{parafield eqn2}\\
a_{\rm p}S^\infty_{\kappa_{\rm p}}(p,\varphi_1)+a_{\rm s} S^\infty_{\kappa_{\rm
s}}(p,\varphi_2)=a_{\rm p}\phi_\infty&+
a_{\rm s}\psi_\infty, \label{paradata eqn}
\end{align}
where $w_j=2(f_j\circ p)G$, $j=1, 2$.

In the reconstruction process, when an approximation of the boundary $\Gamma_D$
is available, the field equations \eqref{parafield eqn1}--\eqref{parafield eqn2}
are solved for the densities $g_1$ and $g_2$. Once the approximated densities
$g_1$ and $g_2$ are computed, the update of the boundary $\Gamma_D$ can be
obtained by solving the linearized data equation \eqref{paradata eqn} with
respect to $\Gamma_D$.

\subsubsection{Iterative scheme}

The linearization of \eqref{paradata eqn} with respect to $p$ requires the
Fr\'{e}chet derivative of the parameterized integral operator
$S_\kappa^\infty$, which can be explicitly calculated as follows
\begin{align} 
\Big({S_\kappa^\infty}'[p;\varphi]q\Big)(t)=&-\mathrm{i}\kappa\gamma_\kappa
\int_0^{2\pi}\mathrm{e}^{-\mathrm{i}\kappa\hat{x}(t)\cdot
p(\varsigma)}\hat{x}(t)\cdot q(\varsigma)\varphi(\varsigma)\mathrm{d}\varsigma
\nonumber \\
=&-\mathrm{i}\kappa\gamma_\kappa\int_0^{2\pi}\exp\Big(-\mathrm{i}
\kappa\big(c_1\cos t+c_2\sin t+r(\varsigma)\cos(t-\varsigma)\big)\Big)
\nonumber \\
&\qquad\quad\times\Big(\Delta c_1\cos t+\Delta c_2\sin t +\Delta r(\varsigma)
\cos(t-\varsigma)\Big)\varphi(\varsigma)\,\mathrm{d}\varsigma,
\label{FreSinfty}
\end{align}
where 
$$q(\varsigma)=(\Delta c_1, \Delta c_2)+\Delta r(\varsigma)
(\cos\varsigma,\sin\varsigma)$$ 
gives the update of the boundary
$\Gamma_D$. Then, the linearization of \eqref{paradata eqn} leads to
\begin{equation}\label{linear paradata eqn}
a_{\rm p}{S_{\kappa_{\rm p}}^\infty}'[p;\varphi_1]q+a_{\rm s} {S_{\kappa_{\rm
s}}^\infty}'[p;\varphi_2]q=w,
\end{equation}
where
\begin{align*}
w:= & a_{\rm p}\Big(\phi_\infty-S^\infty_{\kappa_{\rm
p}}(p,\varphi_1)\Big)+a_{\rm s} \Big(\psi_\infty-S^\infty_{\kappa_{\rm
s}}(p,\varphi_2)\Big).
\end{align*}

As usual, a stopping criteria is necessary to terminate the iteration. For our
iterative procedure, the following relative error estimator is used
\begin{equation}
{\rm E}_k:=\frac{\left\|a_{\rm p}\Big(\phi_\infty-S^\infty_{\kappa_{\rm p}}
(p^{(k)},\varphi_1)\Big)+a_{\rm s} \Big(\psi_\infty-S^\infty_{\kappa_{\rm s}}
(p^{(k)},\varphi_2)\Big)\right\|_{L^2}}{\big\|a_{\rm p}\phi_\infty+a_{\rm
s}\psi_\infty\big\|_{L^2}}\leq\epsilon, \label{relativeerror}
\end{equation}
where $\epsilon$ is a user-specified small positive constant depending on the
noise level and $p^{(k)}$ is the $k$th approximation of the boundary
$\Gamma_D$.

We are now in a position to present the iterative algorithm for the inverse
obstacle scattering problem with phased far-field data as {\bf Algorithm I}.

\begin{table}[ht]
\begin{tabular}{cp{.8\textwidth}}
\toprule
\multicolumn{2}{l}{{\bf Algorithm I:}\quad Iterative algorithm
for the phased IOSP} \\
\midrule
Step 1 & Send an incident plane wave $\boldsymbol{u}^{\rm
inc}$ with fixed $\omega, \lambda, \mu$, and a fixed incident direction
$d\in\Omega$, and then collect the corresponding far-field data
$\phi_\infty$ or $\psi_\infty$ for the scatterer $D$; \\
Step 2 & Select an initial star-like curve $\Gamma^{(0)}$
for the boundary $\Gamma_D$ and the error tolerance $\epsilon$. Set $k=0$; \\
Step 3 & For the curve $\Gamma^{(k)}$, compute the densities
$\varphi_1$ and $\varphi_2$ from \eqref{parafield eqn1}--\eqref{parafield eqn2};
\\
Step 4 & Solve \eqref{linear paradata eqn} to obtain the
updated approximation $\Gamma^{(k+1)}:=\Gamma^{(k)}+q$ and evaluate the error
$E_{k+1}$ defined in \eqref{relativeerror}; \\
Step 5 & If $E_{k+1}\geq\epsilon$, then set $k=k+1$ and go
to Step 3. Otherwise, the current approximation $\Gamma^{(k+1)}$ is taken to be
the final reconstruction of $\Gamma_D$. \\
\bottomrule
\end{tabular}
\end{table}

\subsubsection{Discretization} 

We use the Nystr\"{o}m method which is described in Section 4 for the full
discretizations of \eqref{parafield eqn1}--\eqref{parafield eqn2}. Now we
discuss the discretization of the linearized equation \eqref{linear paradata
eqn} and obtain the update by using least squares with Tikhonov regularization
\cite{Kress-IP2003}. As for a finite dimensional space to approximate the
radial function $r$ and its update $\Delta r$, we choose the space of
trigonometric polynomials of the form
\begin{equation}\label{updataq}
\Delta r(\tau)=\sum_{m=0}^M\alpha_m\cos{m\tau} +\sum_{m=1}^M\beta_m\sin{m\tau},
\end{equation}
where the integer $M>1$ denotes the truncation number. For simplicity, we
reformulate the equation \eqref{linear paradata eqn} by introducing the
following definitions
\begin{align*}
&L_1(t,\varsigma;\kappa,\varphi):=-\mathrm{i}\kappa\gamma_\kappa\exp\left\{
-\mathrm{i}\kappa\Big(c_1\cos t+c_2\sin
t+r(\varsigma)\cos(t-\varsigma)\Big)\right\}\cos t~ \varphi(\varsigma),\\
&L_2(t,\varsigma;\kappa,\varphi):=-\mathrm{i}\kappa\gamma_\kappa\exp\left\{
-\mathrm{i}\kappa\Big(c_1\cos t+c_2\sin
t+r(\varsigma)\cos(t-\varsigma)\Big)\right\}\sin t~ \varphi(\varsigma),\\
&L_{3,m}(t,\varsigma;\kappa,\varphi):=-\mathrm{i}\kappa\gamma_\kappa\exp\left\{
-\mathrm{i}\kappa\Big(c_1\cos t+c_2\sin t+r(\varsigma)
\cos(t-\varsigma)\Big)\right\}\cos(t-\varsigma)\cos
m\varsigma~\varphi(\varsigma),\\
&L_{4,m}(t,\varsigma;\kappa,\varphi):=-\mathrm{i}\kappa\gamma_\kappa\exp\left\{
-\mathrm{i}\kappa\Big(c_1\cos t+c_2\sin t+
r(\varsigma)\cos(t-\varsigma)\Big)\right\}\cos(t-\varsigma)\sin
m\varsigma~\varphi(\varsigma).
\end{align*}
Combining \eqref{FreSinfty}--\eqref{linear paradata eqn} and using the 
trapezoidal rule \eqref{traperule}, we get the discretized linear system
\begin{align}\label{discrelinear}
a_{\rm p}&\bigg(\sum_{l=1}^2 B_{l}^c(\varsigma_i^{(\bar{n})};\kappa_{\rm
p},\varphi_{1})\Delta
c_l+\sum_{m=0}^M\alpha_mB_{1,m}^r(\varsigma_i^{(\bar{n})};\kappa_{\rm
p},\varphi_{1})
+\sum_{m=1}^M\beta_mB_{2,m}^r(\varsigma_i^{(\bar{n})};\kappa_{\rm
p},\varphi_{1})\bigg) \nonumber\\
&+ a_{\rm s}\bigg(\sum_{l=1}^2 B_j^c(\varsigma_i^{(\bar{n})};\kappa_{\rm
s},\varphi_{2})\Delta c_l
+\sum_{m=0}^M\alpha_mB_{1,m}^r(\varsigma_i^{(\bar{n})};\kappa_{\rm 
s},\varphi_{2})
+\sum_{m=1}^M\beta_mB_{2,m}^r(\varsigma_i^{(\bar{n})};\kappa_{\rm
s},\varphi_{2})\bigg)\notag\\
&\hspace{2cm}= w(\varsigma_i^{(\bar{n})})
\end{align}
to determine the real coefficients $\Delta c_1$, $\Delta c_2$, $\alpha_m$, and
$\beta_m$, where
\begin{align*}
B_l^c(\varsigma_i^{(\bar{n})};\kappa,\varphi)=\frac{\pi}{n}\sum_{j=0}^{2n-1}
L_l(\varsigma_i^{(\bar{n})},\varsigma_j^{(n)};\kappa,\varphi),\quad l=1, 2, 
\end{align*}
and 
\begin{align*}
B_{1,m}^r(\varsigma_i^{(\bar{n})};\kappa,\varphi)&=\frac{\pi}{n}\sum_{j=0}^{2n-1
} L_{3,m}(\varsigma_i^{(\bar{n})},\varsigma_j^{(n)};\kappa,\varphi), \\
B_{2,m}^r(\varsigma_i^{(\bar{n})};\kappa,\varphi)&=\frac{\pi}{n}\sum_{j=0}^{2n-1
}L_{4,m}(\varsigma_i^{(\bar{n})},\varsigma_j^{(n)};\kappa,\varphi).
\end{align*}

In general, $2M+1\ll 2n$, and due to the ill-posedness, the overdetermined
system \eqref{discrelinear} is solved via the Tikhonov regularization. Hence the
linear system \eqref{discrelinear} is reformulated into minimizing the following
function
\begin{align}
&\sum_{i=0}^{2n-1}\Bigg|a_{\rm p}\bigg(\sum_{l=1}^2
B_{l}^c(\varsigma_i^{(\bar{n})};\kappa_{\rm p},\varphi_{1})\Delta
c_l+\sum_{m=0}^M\alpha_mB_{1,m}^r(\varsigma_i^{(\bar{n})};\kappa_{\rm
p},\varphi_{1})
+\sum_{m=1}^M\beta_mB_{2,m}^r(\varsigma_i^{(\bar{n})};\kappa_{\rm
p},\varphi_{1})\bigg) \nonumber \\
&\qquad + a_{\rm s}\bigg(\sum_{l=1}^2 B_j^c(\varsigma_i^{(\bar{n})};\kappa_{\rm
s},\varphi_{2})\Delta c_l
+\sum_{m=0}^M\alpha_mB_{1,m}^r(\varsigma_i^{(\bar{n})};\kappa_{\rm
s},\varphi_{2})
+\sum_{m=1}^M\beta_mB_{2,m}^r(\varsigma_i^{(\bar{n})};\kappa_{\rm
s},\varphi_{2})\bigg)\notag\\
&\qquad- w(\varsigma_i^{(\bar{n})})\Bigg|^2 
 +\lambda\bigg(|\Delta c_1|^2+|\Delta
c_2|^2+2\pi\Big[\alpha_0^2+\frac{1}{2}\sum_{m=1}
^M(1+m^2)^2(\alpha_m^2+\beta_m^2)\Big]\bigg), \label{RLHuygens3}
\end{align}
where $\lambda>0$ is a regularization parameter. It is easy to show that the
minimizer of \eqref{RLHuygens3} is the solution of the system
\begin{align}\label{EqualRLHuygens3}
\lambda
I\xi+\Re(\widetilde{B}^*\widetilde{B})\xi=\Re(\widetilde{B}^*\widetilde{w}),
\end{align}
where
\begin{align*}
\widetilde{B}=\Big(&a_{\rm p}B_1^c(:,\kappa_{\rm p}, \varphi_{1})+a_{\rm
s}B_1^c(:,\kappa_{\rm s}, \varphi_{2}), a_{\rm p}B_2^c(:,\kappa_{\rm p},
\varphi_{1})+a_{\rm s}B_2^c(:,\kappa_{\rm s}, \varphi_{2}), \\
&a_{\rm p}B_{1,0}^r(:,\kappa_{\rm p}, \varphi_{1})+a_{\rm
s}B_{1,0}^r(:,\kappa_{\rm s}, \varphi_{2}), \cdots, a_{\rm p}
B_{1,M}^r(:,\kappa_{\rm p}, \varphi_{1})+a_{\rm s}B_{1,M}^r(:,\kappa_{\rm s},
\varphi_{2}), \\
&a_{\rm p}B_{2,1}^r(:,\kappa_{\rm p}, \varphi_{1})+a_{\rm
s}B_{2,1}^r(:,\kappa_{\rm s}, \varphi_{2}),
\cdots,a_{\rm p}B_{2,M}^r(:,\kappa_{\rm p}, \varphi_{1})+a_{\rm
s}B_{2,M}^r(:,\kappa_{\rm s}, \varphi_{2})\Big)_{(2n)\times(2M+3)} 
\end{align*}
and
\begin{align*}
\noindent
&\xi=(\Delta c_1, \Delta c_2, \alpha_0,\cdots, \alpha_M,\beta_1,\cdots,
\beta_M)^\top, \\
&\widetilde{I}=\mathrm{diag}\{1, 1, 2\pi, \pi(1+1^2)^2, \cdots, \pi(1+M^2)^2,
\pi(1+1^2)^2, \cdots, \pi(1+M^2)^2\}, \\
&\widetilde{w}=(w(\tau_0^{(n)}),\cdots, w(\tau_{2n-1}^{(n)}))^\top.
\end{align*}
Thus, we obtain the new approximation 
$$
p^{new}(\hat{x})=(c+\Delta c)+\big(r(\hat{x})+\Delta r(\hat{x})\big)\hat{x}.
$$
\subsection{The phaseless IOSP}

To incorporate the reference ball, we find the solution of
\eqref{HelmholtzDec} with $D$ replaced by $D\cup B$ in the form of single-layer
potentials with densities $g_{1,\sigma}$ and $g_{2,\sigma}$:
\begin{align}
\phi(x)=\sum_\sigma\int_{\Gamma_\sigma}\Phi(x,y;\kappa_{\rm
p})g_{1,\sigma}(y)\mathrm{d}s(y), \quad 
\psi(x)=\sum_\sigma\int_{\Gamma_\sigma}\Phi(x,y;\kappa_{\rm
s})g_{2,\sigma}(y)\mathrm{d}s(y),  \label{singlelayerDB}
\end{align}
for $x\in\mathbb{R}^2\setminus\Gamma_{D\cup B}$, where $\sigma=D,
B$. We introduce integral operators
$$
(K_\kappa^{\sigma,\varrho}
g)(x)=2\int_{\Gamma_\sigma}\frac{\partial\Phi(x,y;\kappa)}
{\partial\nu(x)}g(y)\mathrm{d}s(y), \quad x\in\Gamma_\varrho,
$$
$$
(H_\kappa^{\sigma,\varrho}
g)(x)=2\int_{\Gamma_\sigma}\frac{\partial\Phi(x,y;\kappa)}
{\partial\tau(x)}g(y)\mathrm{d}s(y), \quad x\in\Gamma_\varrho,
$$
and the corresponding far-field pattern
$$
(S_{\kappa,\sigma}^\infty
g)(\hat{x})=\gamma_\kappa\int_{\Gamma_\sigma}\mathrm{e}^{-\mathrm{i}\kappa
\hat{x}\cdot y}g(y)\mathrm{d}s(y), \quad\hat{x}\in\Omega,
$$
where $\varrho=D, B$. Letting $x\in\mathbb{R}^2\setminus\overline{D\cup B}$
approach the boundary $\Gamma_D$ and $\Gamma_B$ respectively in
\eqref{singlelayerDB}, and using the jump relation of single-layer potentials
and the boundary condition of \eqref{HelmholtzDec} for $\Gamma_{D}\cup
\Gamma_{B}$, we deduce the field equations in the
operator form
\begin{align}
-g_{1,D}+\sum_\sigma K^{\sigma,D}_{\kappa_{\rm p}}g_{1,\sigma}+\sum_\sigma
H^{\sigma,D}_{\kappa_{\rm s}}g_{2,\sigma}=2f_1 \quad{\rm on}~\Gamma_D,
\label{phafield eqn1}\\ 
g_{2,D}+\sum_\sigma H^{\sigma,D}_{\kappa_{\rm p}}g_{1,\sigma}-\sum_\sigma
K^{\sigma,D}_{\kappa_{\rm s}}g_{2,\sigma}=2f_2 
\quad{\rm on}~\Gamma_D, \label{phafield eqn2}\\ 
-g_{1,B}+\sum_\sigma K^{\sigma,B}_{\kappa_{\rm p}}g_{1,\sigma}+\sum_\sigma
H^{\sigma,B}_{\kappa_{\rm s}}g_{2,\sigma}=2f_1
\quad{\rm on}~\Gamma_B, \label{phafield eqn3}\\ 
g_{2,B}+\sum_\sigma H^{\sigma,B}_{\kappa_{\rm p}}g_{1,\sigma}-\sum_\sigma
K^{\sigma,B}_{\kappa_{\rm s}}g_{2,\sigma}=2f_2 \quad{\rm on}~\Gamma_B.
\label{phafield eqn4}
\end{align}
The phaseless data equation can be written as 
\begin{align}\label{phadata eqn}
a_{\rm p}\bigg|\sum_\sigma S^\infty_{\kappa_{\rm
p},\sigma}g_{1,\sigma}\bigg|^2+a_{\rm s}\bigg|\sum_\sigma S^\infty_{\kappa_{\rm
s},\sigma}g_{2,\sigma}\bigg|^2 =a_{\rm p}|\phi_\infty|^2+a_{\rm
s}|\psi_\infty|^2.
\end{align}

In the reconstruction process, for a given approximated boundary
$\Gamma_D$, the field equations \eqref{phafield eqn1}--\eqref{phafield eqn4}
can be solved for the densities $g_{1,\sigma}$ and $g_{2,\sigma}$. Once
$g_{1,\sigma}$ and $g_{2,\sigma}$ are computed, the update of the boundary
$\Gamma_D$ can be obtained by linearizing \eqref{phadata eqn} with respect to
$\Gamma_D$.

\subsubsection{Parametrization and iterative scheme}

For simplicity, the boundary $\Gamma_D$ and $\Gamma_B$ are assumed to be
star-shaped curves with the parametric forms
\begin{align*}
&\Gamma_D=\{p_D(\hat{x})=c+r(\hat{x})\hat{x}: ~c=(c_1,c_2)^\top,\
\hat{x}\in\Omega\}, \\
&\Gamma_B=\{p_B(\hat{x})=b+R\hat{x}: ~b=(b_1,b_2)^\top,\ \hat{x}\in\Omega\},
\end{align*} 
where $\Omega=\{\hat{x}(t)=(\cos t, \sin t)^\top: ~0\leq t< 2\pi\}$.
Using the parametric forms of the boundaries $\Gamma_D$ and $\Gamma_B$, we
introduce the parameterized integral operators which are still represented by
$S_\kappa$, $S_\kappa^\infty$, $K_\kappa$, and $H_\kappa$ for convenience, i.e.,
\begin{align*}
&\big(S_\kappa^{\sigma,\varrho}\varphi_{j,\sigma}\big)(t)=\frac{\mathrm{i}}{2}
\int_0^{2\pi}
H_0^{(1)}(\kappa|p_\varrho(t)-p_\sigma(\varsigma)|)\varphi_{j,\sigma}
(\varsigma)\mathrm{d}\varsigma, \\
&\big(S^\infty_\kappa(p,\varphi_{j,\sigma})\big)(t)=\gamma_{\kappa}\int_0^{2\pi}
\mathrm{e}^{-\mathrm{i}\kappa\hat{x}(t)\cdot
p(\varsigma)}\varphi_{j,\sigma}(\varsigma)\mathrm{d}\varsigma,
\\
&\big(K_\kappa^{\sigma,\varrho}\varphi_{j,\sigma}\big)(t)=\frac{1}{G(t)}\int_0^{
2\pi}\widetilde K^{\sigma,\varrho}
(t,\varsigma;\kappa)\varphi_{j,\sigma}(\varsigma)\mathrm{d}\varsigma,\\
&\big(H_\kappa^{\sigma,\varrho}\varphi_{j,\sigma}\big)(t)=\frac{1}{G(t)}\int_0^{
2\pi}\widetilde H^{\sigma,\varrho}
(t,\varsigma;\kappa)\varphi_{j,\sigma}(\varsigma)\mathrm{d}\varsigma,
\end{align*} 
where the integral kernels are
\begin{align*}
&\widetilde K^{\sigma,\varrho}
(t,\varsigma;\kappa)=\frac{\mathrm{i}\kappa}{2}n(t)\cdot[
p_\sigma(\varsigma)-p_\varrho(t)]
\frac{H_1^{(1)}(\kappa|p_\varrho(t)-p_\sigma(\varsigma)|)}{
|p_\varrho(t)-p_\sigma(\varsigma)|},
\\
&\widetilde H^{\sigma,\varrho}
(t,\varsigma;\kappa)=\frac{\mathrm{i}\kappa}{2}n(t)^\perp\cdot[
p_\sigma(\varsigma)-p_\varrho(t)]
\frac{H_1^{(1)}(\kappa|p_\varrho(t)-p_\sigma(\varsigma)|)}{
|p_\varrho(t)-p_\sigma(\varsigma)|}.
\end{align*}
Here
$\varphi_{j,\sigma}(\varsigma)=G_\sigma(\varsigma)g_j(p_\sigma(\varsigma))$,
$j=1, 2$, $\sigma=D, B$, where
$G_D(\varsigma):=|p'(\varsigma)|=\sqrt{(r'(\varsigma))^2+r^2(\varsigma)}$ and
$G_B=R$ are Jacobian of the transformation, and  
\begin{align*}
& n(t)=\nu(x(t))|p'(t)|=\big(p'_2(t), -p'_1(t)\big),\\
& n(t)^\perp=\tau(x(t))|p'(t)|=\big(p'_1(t), p'_2(t)\big).
\end{align*}
The field equations
and data equation \eqref{phafield eqn1}--\eqref{phafield eqn4} can be
reformulated as the parametrized integral equations
\begin{align}
-&\varphi_{1,D} + (K_{\kappa_{\rm p}}^{D,D}\varphi_{1,D}) G_D + (K_{\kappa_{\rm
p}}^{B,D}\varphi_{1,B}) G_D + (H_{\kappa_{\rm s}}^{D,D}\varphi_{2,D})G_D +
(H_{\kappa_{\rm s}}^{B,D}\varphi_{2,B})G_D = w_{1,D}, \label{phaparafield
eqn1}\\
&\varphi_{2,D} + (H_{\kappa_{\rm p}}^{D,D}\varphi_{1,D}) G_D + (H_{\kappa_{\rm
p}}^{B,D}\varphi_{1,B}) G_D -
(K_{\kappa_{\rm s}}^{D,D}\varphi_{2,D})G_D - (K_{\kappa_{\rm
s}}^{B,D}\varphi_{2,B})G_D = w_{2,D}, \label{phaparafield eqn2}\\
-&\varphi_{1,B} + (K_{\kappa_{\rm p}}^{D,B}\varphi_{1,D}) G_B +(K_{\kappa_{\rm
p}}^{B,B}\varphi_{1,B})G_B + (H_{\kappa_{\rm s}}^{D,B}\varphi_{2,D}) G_B +
(H_{\kappa_{\rm s}}^{B,B}\varphi_{2,B}) G_B = w_{1,B}, \label{phaparafield
eqn3}\\
&\varphi_{2,B} + (H_{\kappa_{\rm p}}^{D,B}\varphi_{1,D}) G_B + (H_{\kappa_{\rm
p}}^{B,B}\varphi_{1,B}) G_B -
(K_{\kappa_{\rm s}}^{D,B}\varphi_{2,D})G_B -
(K_{\kappa_{\rm s}}^{B,B}\varphi_{2,B})G_B = w_{2,B}, \label{phaparafield
eqn4}
\end{align}
and the data equation \eqref{phadata eqn} can be written as 
\begin{equation}\label{phaparadata eqn}
 a_{\rm p}\sum_\sigma \big|S^\infty_{\kappa_{\rm
p}}(p_\sigma,\varphi_{1,_\sigma})\big|^2+a_{\rm s} \sum_\sigma
\big|S^\infty_{\kappa_{\rm s}} (p_\sigma,\varphi_{2,_\sigma})\big|^2=a_{\rm
p}|\phi_\infty|^2+a_{\rm s}|\psi_\infty|^2, 
\end{equation}
with $w_{j,\sigma}=2(f_j\circ p_\sigma)G_\sigma$, $j=1, 2$, $\sigma=D, B$. 

It follows from the Fr\'{e}chet derivative operator
${S_\kappa^\infty}'[p;\varphi]q$ in \eqref{FreSinfty} that the linearization of
\eqref{phaparadata eqn} leads to
\begin{equation}\label{linear phaparadata eqn}
a_{\rm p}2\Re\Big(\overline{\sum_\sigma S^\infty_{\kappa_{\rm p}}
(p_\sigma,\varphi_{1,\sigma})}{S_{\kappa_{\rm
p}}^\infty}'[p_D;\varphi_{1,D}]q\Big)+a_{\rm s}2\Re\Big(\overline{\sum_\sigma
S^\infty_{\kappa_{\rm s}} (p_\sigma,\varphi_{2,\sigma})}{S_{\kappa_{\rm
s}}^\infty}'[p_D;\varphi_{2,D}]q\Big)=\breve{w},
\end{equation}
where
\begin{align*}
\breve{w}:= & a_{\rm p}\Big(|\phi_\infty|^2-\big|\sum_\sigma
S^\infty_{\kappa_{\rm p}} (p_\sigma,\varphi_{1,\sigma})\big|^2\Big)+a_{\rm s}
\Big(|\psi_\infty|^2-\big|\sum_\sigma S^\infty_{\kappa_{\rm s}}
(p_\sigma,\varphi_{2,\sigma})\big|^2\Big).
\end{align*}
Again, we may choose the following relative error estimator to terminate the
iteration
\begin{align}
{\rm E}_k:=&\frac{\displaystyle \left\|a_{\rm
p}\phi_k+a_{\rm s}\psi_k
\right\|_{L^2}} {\Big\|a_{\rm
p}|\phi_\infty|^2+a_{\rm s}|\psi_\infty|^2\Big\|_{L^2}} 
\leq\epsilon, \label{reference relativeerror}
\end{align}
where $\epsilon>0$ is the tolerance parameter which depends on the noise
level and $p_D^{(k)}$ is the $k$th approximation of the boundary $\Gamma_D$ and 
\begin{align*}
 \phi_k&=|\phi_\infty|^2-\big|S^\infty_{\kappa_{\rm p}}
(p_D^{(k)},\varphi_{1,D})+S^\infty_{\kappa_{\rm p}}
(p_B,\varphi_{1,B})\big|^2,\\
\psi_k&=|\psi_\infty|^2-\big|S^\infty_{\kappa_{\rm s}}
(p_D^{(k)},\varphi_{2,D})+S^\infty_{\kappa_{\rm s}}
(p_B,\varphi_{2,B})\big|^2.
\end{align*}

The iterative algorithm for the phaseless IOSP is given by {\bf Algorithm II}. 

\begin{table}[ht]
\begin{tabular}{cp{.8\textwidth}}
\toprule
\multicolumn{2}{l}{{\bf Algorithm II:}\quad Iterative algorithm
for the phaseless IOSP} \\
\midrule
Step 1 & Sent an incident plane wave $\boldsymbol{u}^{\rm
inc}$ with fixed $\omega, \lambda, \mu$ and a fixed incident direction
$d\in\Omega$, and then collect the corresponding phaseless far-field data
$|\phi_\infty|$ or $|\psi_\infty|$ for the scatterer $D\cup B$; \\
Step 2 & Select an initial star-like curve $\Gamma^{(0)}$
for the boundary $\Gamma_D$ and the error tolerance $\epsilon$. Set $k=0$; \\
Step 3 & For the curve $\Gamma^{(k)}$, compute the densities
$\varphi_{1,\sigma}$ and $\varphi_{2,\sigma}$ from \eqref{phaparafield
eqn1}--\eqref{phaparafield eqn4}; \\
Step 4 & Solve \eqref{linear phaparadata eqn} to obtain the
updated approximation $\Gamma^{(k+1)}:=\Gamma^{(k)}+q$ and evaluate the error
$E_{k+1}$ defined in \eqref{reference relativeerror}; \\
Step 5 & If $E_{k+1}\geq\epsilon$, then set $k=k+1$ and go
to Step 3. Otherwise, the current approximation $\Gamma^{(k+1)}$ is served as
the final reconstruction of $\Gamma_D$. \\
\bottomrule
\end{tabular}
\end{table}

\subsubsection{Discretization}

We point out that the kernels $\widetilde K^{\sigma,\varrho}$ and $\widetilde
H^{\sigma,\varrho}$ are singular when $\sigma=\varrho$. By the 
quadrature rules \eqref{quadrature1}--\eqref{quadrature2}, the full
discretization of \eqref{phaparafield eqn1}--\eqref{phaparafield eqn4} can be
deduced as follows.
\begin{align*}
w_{1,D}^{(n),i}= & -\varphi^{(n),i}_{1,D}+
\sum_{j=0}^{2n-1}\left(R_{|i-j|}^{(n)}\widetilde
K_1^D(\varsigma_i^{(n)},\varsigma_j^{(n)};\kappa_{\rm p})+
\frac{\pi}{n}\widetilde K_2^D(\varsigma_i^{(n)},\varsigma_j^{(n)};\kappa_{\rm
p})\right)\varphi_{1,D}^{(n),j} \\
&+\sum_{j=0}^{2n-1}\frac{\pi}{n}\widetilde
K^{B,D}(\varsigma_i^{(n)},\varsigma_j^{(n)};\kappa_{\rm
p})\varphi_{1,B}^{(n),j} 
+\sum_{j=0}^{2n-1}\frac{\pi}{n}\widetilde
H^{B,D}(\varsigma_i^{(n)},\varsigma_j^{(n)};\kappa_{\rm
s})\varphi_{2,B}^{(n),j}\\ 
&+\sum_{j=0}^{2n-1}\left(-T_{i-j}^{(n)}\widetilde
H_1^D(\varsigma_i^{(n)},\varsigma_j^{(n)};\kappa_{\rm s})+
R_{|i-j|}^{(n)}\widetilde H_2^D(\varsigma_i^{(n)},\varsigma_j^{(n)};\kappa_{\rm
s}) + \frac{\pi}{n}\widetilde
H_3^D(\varsigma_i^{(n)},\varsigma_j^{(n)};\kappa_{\rm
s})\right)\varphi_{2,D}^{(n),j}, \end{align*}
\begin{align*}
w_{2,D}^{(n),i}= & \varphi^{(n),i}_{2,D}+
\sum_{j=0}^{2n-1}\left(-T_{i-j}^{(n)}\widetilde
H_1^D(\varsigma_i^{(n)},\varsigma_j^{(n)};\kappa_{\rm
p})+R_{|i-j|}^{(n)}\widetilde
H_2^D(\varsigma_i^{(n)},\varsigma_j^{(n)};\kappa_{\rm p})+
\frac{\pi}{n}\widetilde H_3^D(\varsigma_i^{(n)},\varsigma_j^{(n)};\kappa_{\rm
p})\right)\varphi_{1,D}^{(n),j} \\
&+\sum_{j=0}^{2n-1}\frac{\pi}{n}\widetilde
H^{B,D}(\varsigma_i^{(n)},\varsigma_j^{(n)};\kappa_{\rm p})\varphi_{1,B}^{(n),j}
-
\sum_{j=0}^{2n-1}\left(R_{|i-j|}^{(n)}\widetilde
K_1^D(\varsigma_i^{(n)},\varsigma_j^{(n)};\kappa_{\rm s})+
\frac{\pi}{n}\widetilde K_2^D(\varsigma_i^{(n)},\varsigma_j^{(n)};\kappa_{\rm
s})\right)\varphi_{2,D}^{(n),j} \\
&-\sum_{j=0}^{2n-1}\frac{\pi}{n}\widetilde
K^{B,D}(\varsigma_i^{(n)},\varsigma_j^{(n)};\kappa_{\rm
s})\varphi_{2,B}^{(n),j},
\end{align*}
\begin{align*}
w_{1,B}^{(n),i}= & -\varphi^{(n),i}_{1,B}
+\sum_{j=0}^{2n-1}\frac{\pi}{n}\widetilde
K^{D,B}(\varsigma_i^{(n)},\varsigma_j^{(n)};\kappa_{\rm
p})\varphi_{1,D}^{(n),j}+\sum_{j=0}^{2n-1}\frac{\pi}{n}\widetilde
H^{D,B}(\varsigma_i^{(n)},\varsigma_j^{(n)};\kappa_{\rm s})\varphi_{2,D}^{(n),j}
\\
&+\sum_{j=0}^{2n-1}\left(R_{|i-j|}^{(n)}\widetilde
K_1^B(\varsigma_i^{(n)},\varsigma_j^{(n)};\kappa_{\rm p})+
\frac{\pi}{n}\widetilde K_2^B(\varsigma_i^{(n)},\varsigma_j^{(n)};\kappa_{\rm
p})\right)\varphi_{1,B}^{(n),j} \\
&+ \sum_{j=0}^{2n-1}\left(-T_{i-j}^{(n)}\widetilde
H_1^B(\varsigma_i^{(n)},\varsigma_j^{(n)};\kappa_{\rm
s})+R_{|i-j|}^{(n)}\widetilde
H_2^B(\varsigma_i^{(n)},\varsigma_j^{(n)};\kappa_{\rm s})+
\frac{\pi}{n}\widetilde H_3^B(\varsigma_i^{(n)},\varsigma_j^{(n)};\kappa_{\rm
s})\right)\varphi_{2,B}^{(n),j},
\end{align*}
\begin{align*}
w_{2,B}^{(n),i}= &\varphi^{(n),i}_{2,B}
+\sum_{j=0}^{2n-1}\frac{\pi}{n}\widetilde
H^{D,B}(\varsigma_i^{(n)},\varsigma_j^{(n)};\kappa_{\rm
p})\varphi_{1,D}^{(n),j}-\sum_{j=0}^{2n-1}\frac{\pi}{n}\widetilde
K^{D,B}(\varsigma_i^{(n)},\varsigma_j^{(n)};\kappa_{\rm s})\varphi_{2,D}^{(n),j}
\\
&+ \sum_{j=0}^{2n-1}\left(-T_{i-j}^{(n)}\widetilde
H_1^B(\varsigma_i^{(n)},\varsigma_j^{(n)};\kappa_{\rm
p})+R_{|i-j|}^{(n)}\widetilde
H_2^B(\varsigma_i^{(n)},\varsigma_j^{(n)};\kappa_{\rm p})+
\frac{\pi}{n}\widetilde H_3^B(\varsigma_i^{(n)},\varsigma_j^{(n)};\kappa_{\rm
p})\right)\varphi_{1,B}^{(n),j} \\
& -\sum_{j=0}^{2n-1}\left(R_{|i-j|}^{(n)}\widetilde
K_1^B(\varsigma_i^{(n)},\varsigma_j^{(n)};\kappa_{\rm s})+
\frac{\pi}{n}\widetilde K_2^B(\varsigma_i^{(n)},\varsigma_j^{(n)};\kappa_{\rm
s})\right)\varphi_{2,B}^{(n),j},
\end{align*}
where $w_{l,\sigma}^{(n),i}=w_{l,\sigma}(\varsigma_i^{(n)})$,
$\varphi_{l,\sigma}^{(n),j}= \varphi_{l,\sigma}(\varsigma_j^{(n)})$ for
$i,j=0,\cdots,2n-1$, $l=1,2$, $\sigma=D, B$.

In addition, it is convenient to introduce the following definitions
\begin{align*}
&M_D(t,\varsigma;\kappa,\varphi):=\gamma_\kappa\exp\left\{-\mathrm{i}
\kappa\Big(c_1\cos t+c_2\sin t+r(\varsigma)
\cos(t-\varsigma)\Big)\right\}\varphi(\varsigma), \\
&M_B(t,\varsigma;\kappa,\varphi):=\gamma_\kappa\exp\left\{-\mathrm{i}
\kappa\Big(c_1\cos t+c_2\sin t+R
\cos(t-\varsigma)\Big)\right\}\varphi(\varsigma),\\
&S_{\kappa_{\rm p}}^\infty(\varsigma_i^{(\bar{n})})
=\frac{\pi}{n}\sum_{j=0}^{2n-1}\Big(M_D(\varsigma_i^{(\bar{n})},\varsigma_j^{(n)
};\kappa_{\rm p},\varphi_{1,D})+M_B(\varsigma_i^{(\bar{n})},\varsigma_j^{(n)}
;\kappa_{\rm p},\varphi_{1,B})\Big),\\
&S_{\kappa_{\rm s}}^\infty(\varsigma_i^{(\bar{n})})
=\frac{\pi}{n}\sum_{j=0}^{2n-1}\Big(M_D(\varsigma_i^{(\bar{n})},\varsigma_j^{
(n)};\kappa_{\rm s},\varphi_{2,D})
+M_B(\varsigma_i^{(\bar{n})},\varsigma_j^{(n)};\kappa_{\rm
s},\varphi_{2,B})\Big).
\end{align*}
Then, we get the discretized linear system
\begin{align}\label{phaseless discrelinear}
a_{\rm p}&\bigg(\sum_{l=1}^2 A_{l}^c(\varsigma_i^{(\bar{n})};\kappa_{\rm
p},\varphi_{1})\Delta
c_l+\sum_{m=0}^M\alpha_mA_{1,m}^r(\varsigma_i^{(\bar{n})};\kappa_{\rm
p},\varphi_{1})
+\sum_{m=1}^M\beta_mA_{2,m}^r(\varsigma_i^{(\bar{n})};\kappa_{\rm
p},\varphi_{1})\bigg) \nonumber\\
&+ a_{\rm s}\bigg(\sum_{l=1}^2 A_j^c(\varsigma_i^{(\bar{n})};\kappa_{\rm
s},\varphi_{2})\Delta c_l
+\sum_{m=0}^M\alpha_mA_{1,m}^r(\varsigma_i^{(\bar{n})};\kappa_{\rm
s},\varphi_{2})
+\sum_{m=1}^M\beta_mA_{2,m}^r(\varsigma_i^{(\bar{n})};\kappa_{\rm
s},\varphi_{2})\bigg)\notag\\
&\hspace{2cm}= \breve{w}(\varsigma_i^{(\bar{n})}),
\end{align}
which is to determine the real coefficients $\Delta c_1$, $\Delta c_2$,
$\alpha_m$ and $\beta_m$. Here 
\begin{align*}
A_l^c(\varsigma_i^{(\bar{n})};\kappa,\varphi)=2\Re\Big\{\frac{\pi}{n}\overline{
S_{\kappa}^\infty(\varsigma_i^{(\bar{n})})}\sum_{j=0}^{2n-1}L_l(\varsigma_i^{
(\bar{n})},\varsigma_j^{(n)};\kappa,\varphi)\Big\}
\end{align*}
for $l=1, 2$, and
\begin{align*}
&A_{1,m}^r(\varsigma_i^{(\bar{n})};\kappa,\varphi)=2\Re\Big\{\frac{\pi}
{n}\overline{S_{\kappa}^\infty(\varsigma_i^{(\bar{n})})}\sum_{j=0}^{2n-1}L_{3,m}
(\varsigma_i^{(\bar{n})},\varsigma_j^{(n)};\kappa,\varphi)\Big\}, \\
&A_{2,m}^r(\varsigma_i^{(\bar{n})};\kappa,\varphi)=2\Re\Big\{\frac{\pi}
{n}\overline{S_{\kappa}^\infty(\varsigma_i^{(\bar{n})})}\sum_{j=0}^{2n-1}L_{4,m}
(\varsigma_i^{(\bar{n})},\varsigma_j^{(n)};\kappa,\varphi)\Big\}.
\end{align*}

Similarly, the overdetermined system \eqref{phaseless discrelinear} can be 
solved by using the Tikhonov regularization with an $H^2$ penalty term described
in Section 4.1.2. The details are omitted.

\begin{table}
\caption{Parametrization of the exact boundary curves.}\label{partable}
\begin{tabular}{lll}
\toprule  
Type           &Parametrization\\
\midrule  
Apple-shaped obstacle  &
$p_D(t)=\displaystyle\frac{0.55(1+0.9\cos{t}+0.1\sin{2t})}{1+0.75\cos{t}}(\cos{t
},\sin{t}), \quad t\in [0,2\pi]$ \\
~\\
Peanut-shaped obstacle & 
$p_D(t)=0.5\sqrt{0.25\cos^2{t}+\sin^2{t}}(\cos{t},\sin{t}), \quad
t\in[0,2\pi]$\\   
\bottomrule 
\end{tabular}
\end{table}

\begin{figure}
\centering 
\subfigure[Reconstruction with $1\%$ noise, $\epsilon=0.01$ ]
{\includegraphics[width=0.4\textwidth]{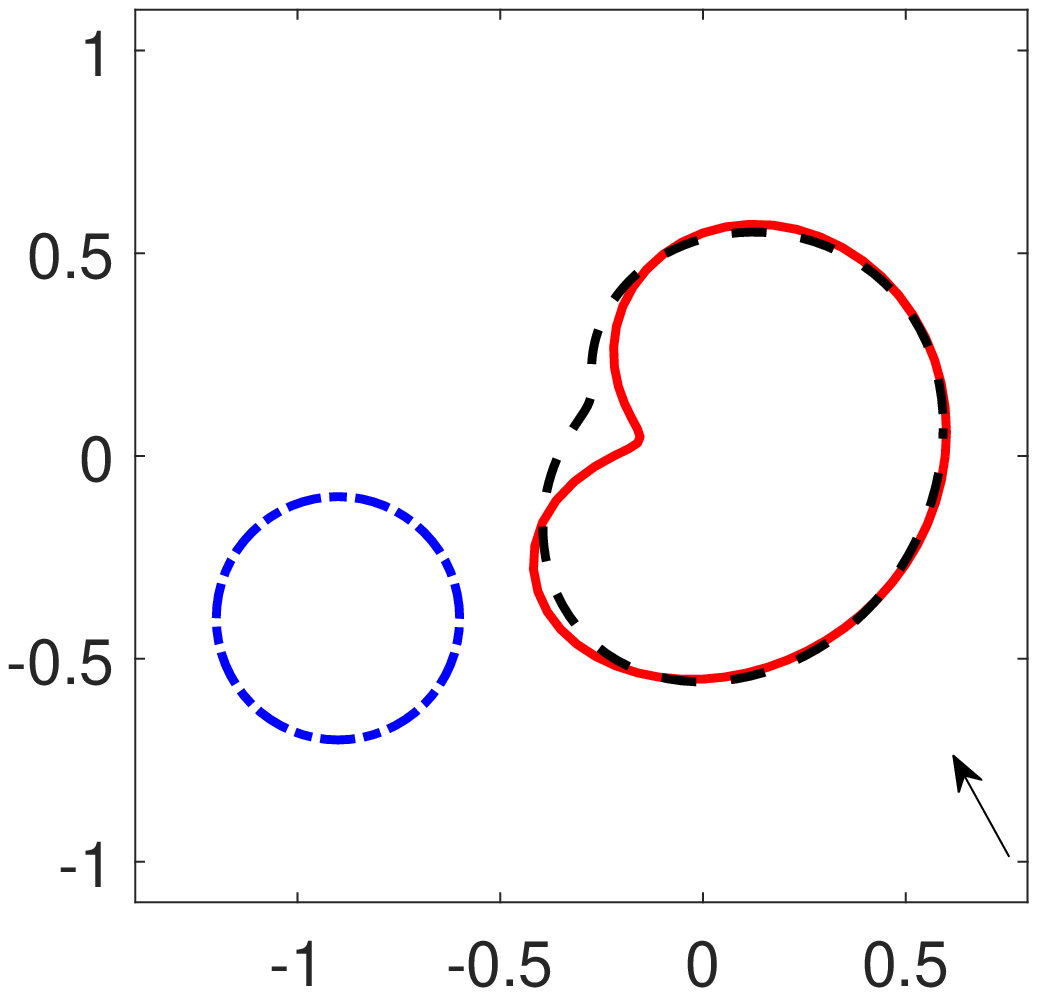}}
\subfigure[Relative error with $1\%$ noise]
{\includegraphics[width=0.4\textwidth]{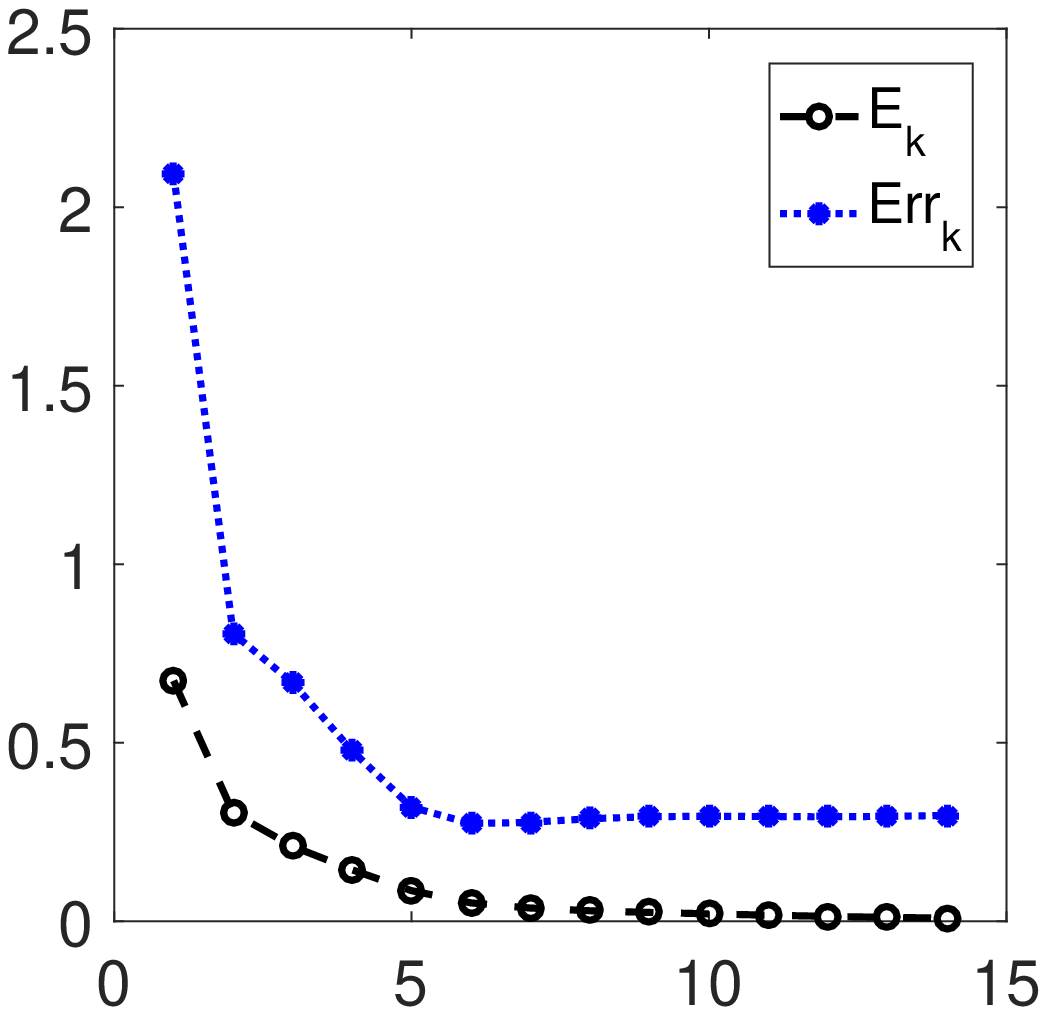}} 
\subfigure[Reconstruction with $5\%$ nois, $\epsilon=0.025$]
{\includegraphics[width=0.4\textwidth]{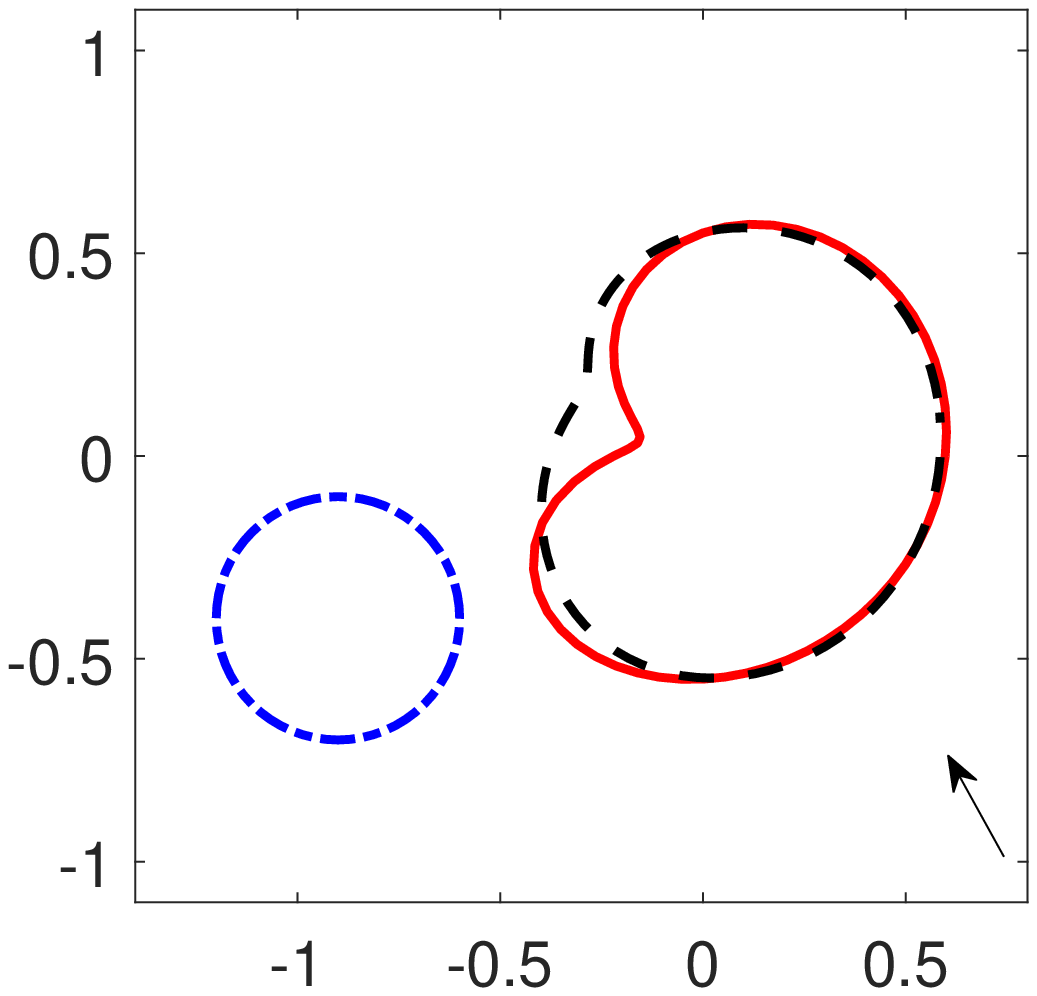}}
\subfigure[Relative error with $5\%$ noise]
{\includegraphics[width=0.4\textwidth]{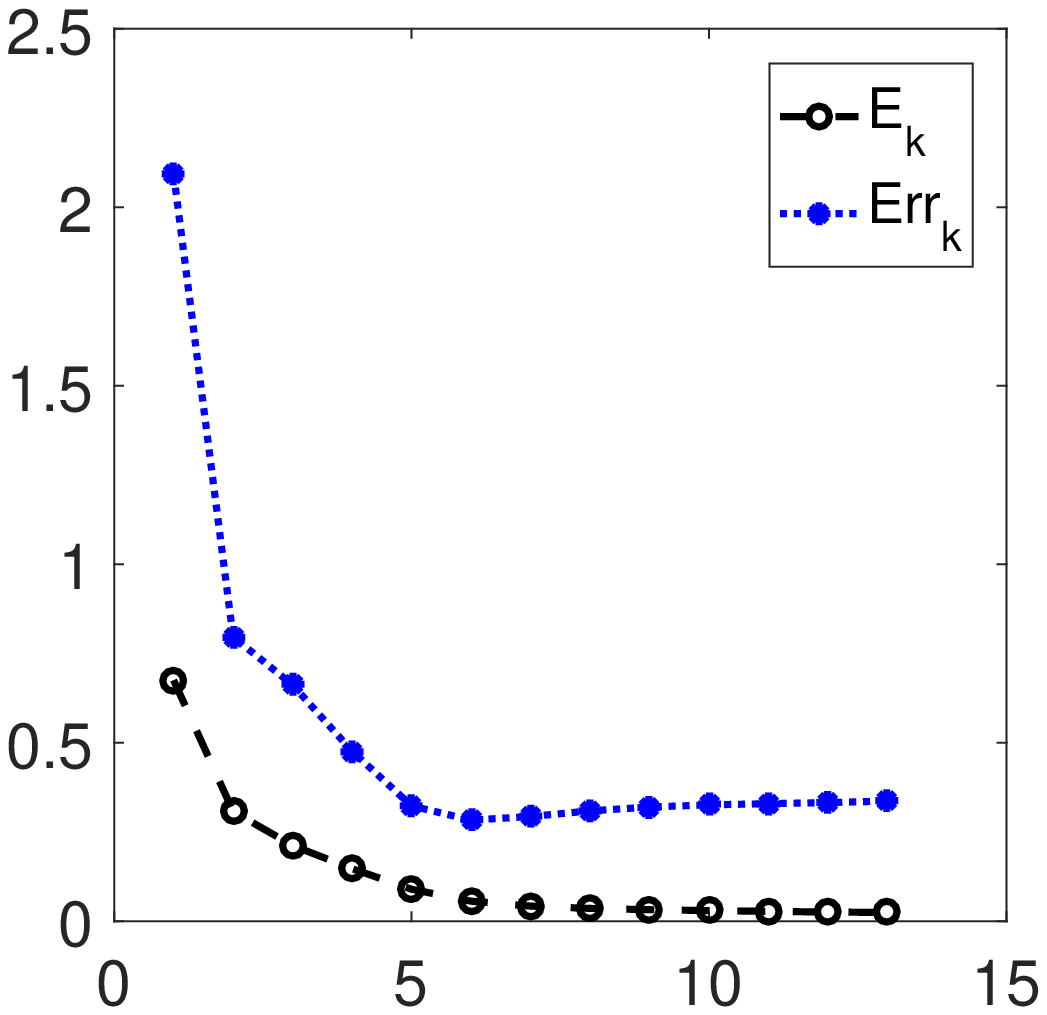}}
\caption{Reconstructions of an apple-shaped obstacle with phased data at
different
levels of noise (see example 1). The initial guess is given by
$(c_1^{(0)},c_2^{(0)})=(-0.9,
0.4), r^{(0)}=0.3$ and the incident angle $\theta=5\pi/8$.}\label{IOSP-2}
\end{figure}

\begin{figure}
\centering 
\subfigure[Reconstruction with $1\%$ noise, $\epsilon=0.006$ ]
{\includegraphics[width=0.4\textwidth]{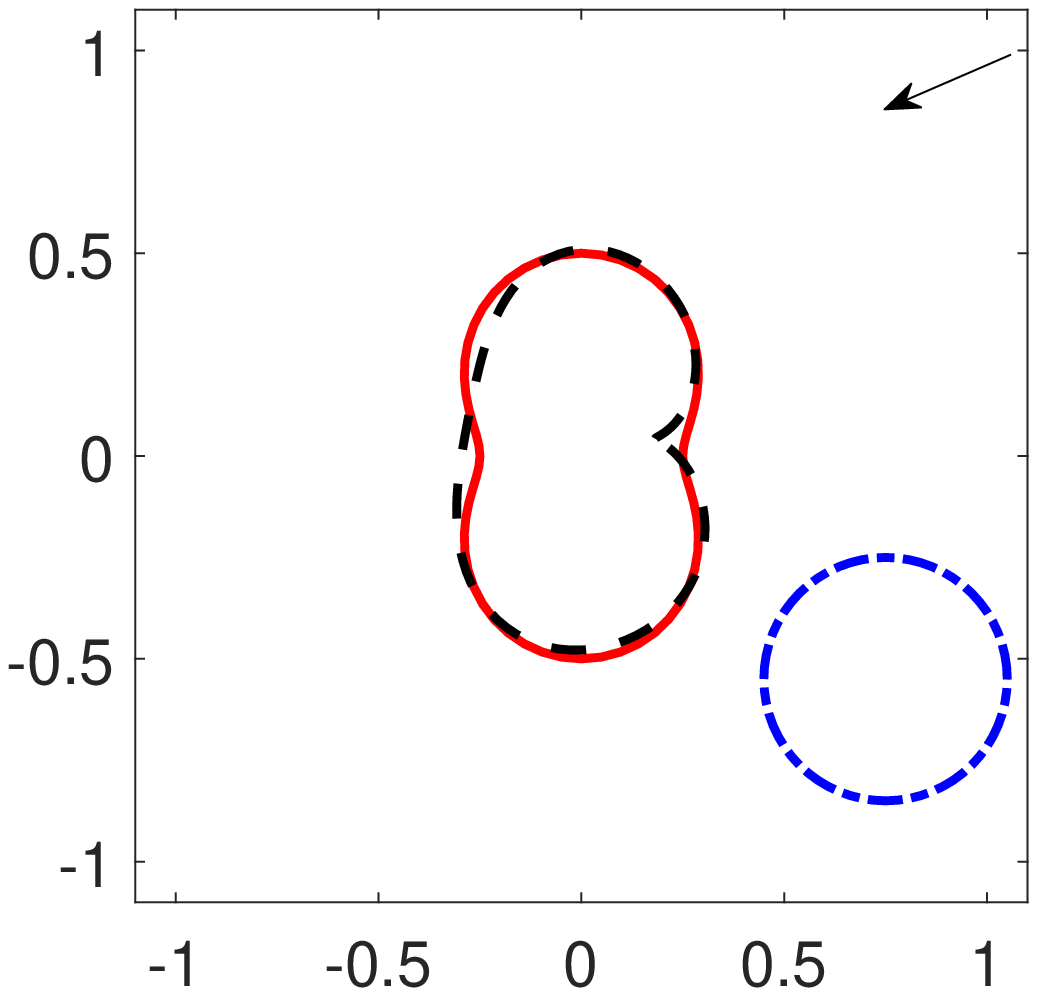}}
\subfigure[Relative error with $1\%$ noise]
{\includegraphics[width=0.4\textwidth]{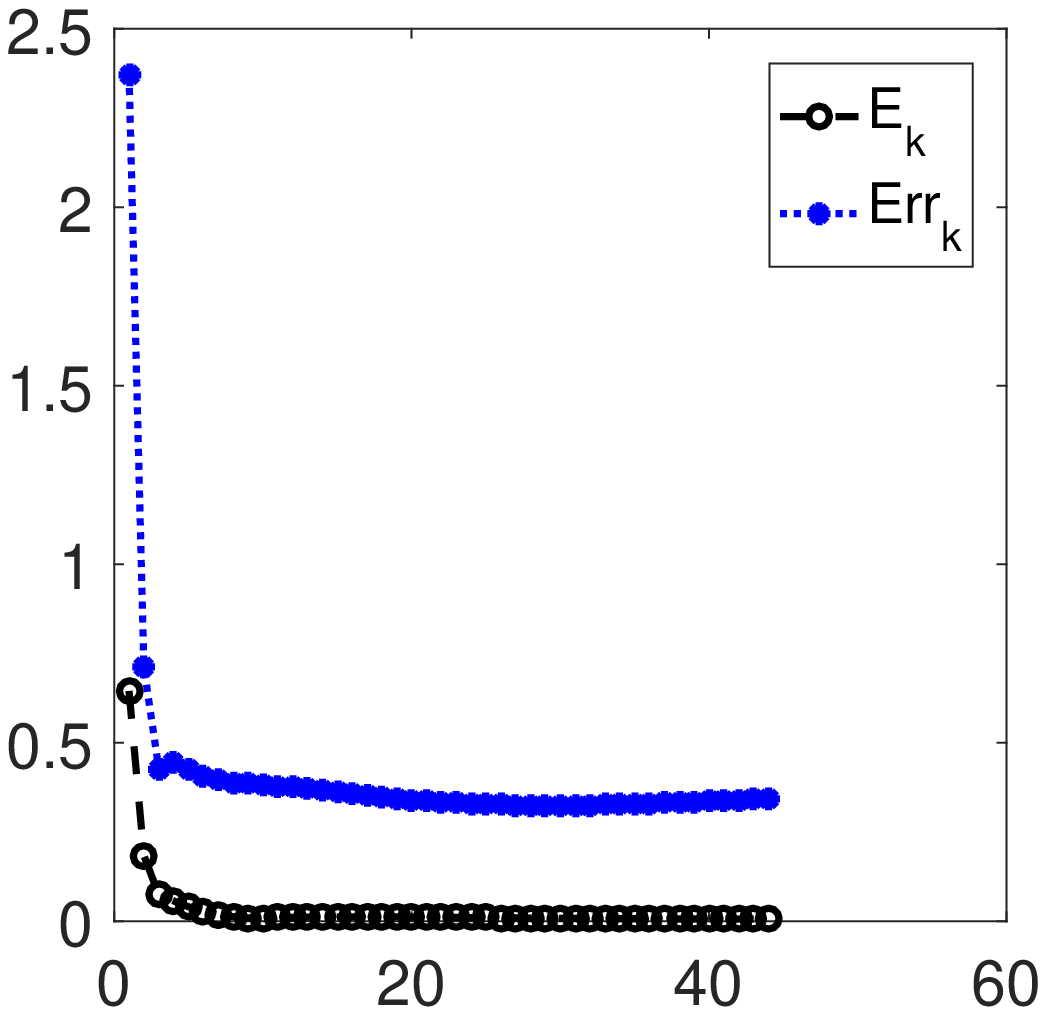}} 
\subfigure[Reconstruction with $5\%$ nois, $\epsilon=0.025$]
{\includegraphics[width=0.4\textwidth]{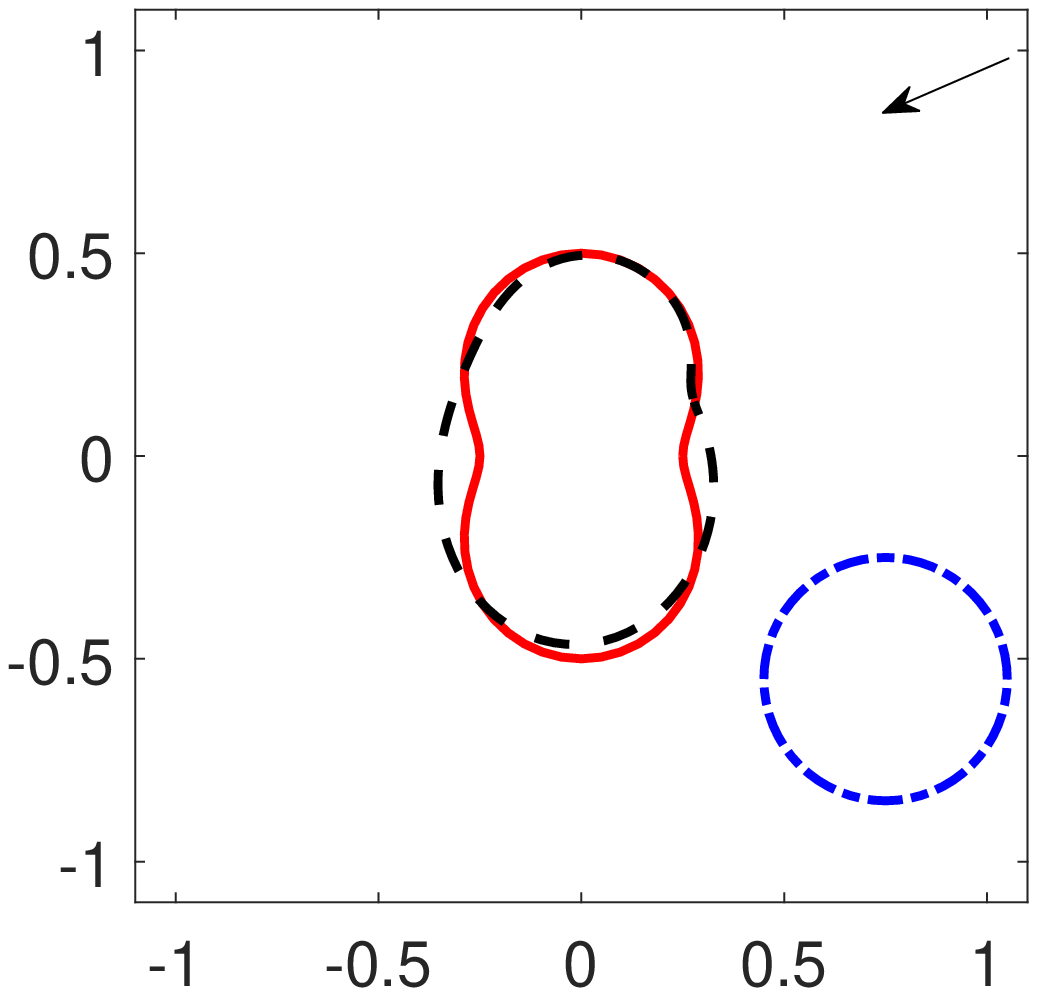}}
\subfigure[Relative error with $5\%$ noise]
{\includegraphics[width=0.4\textwidth]{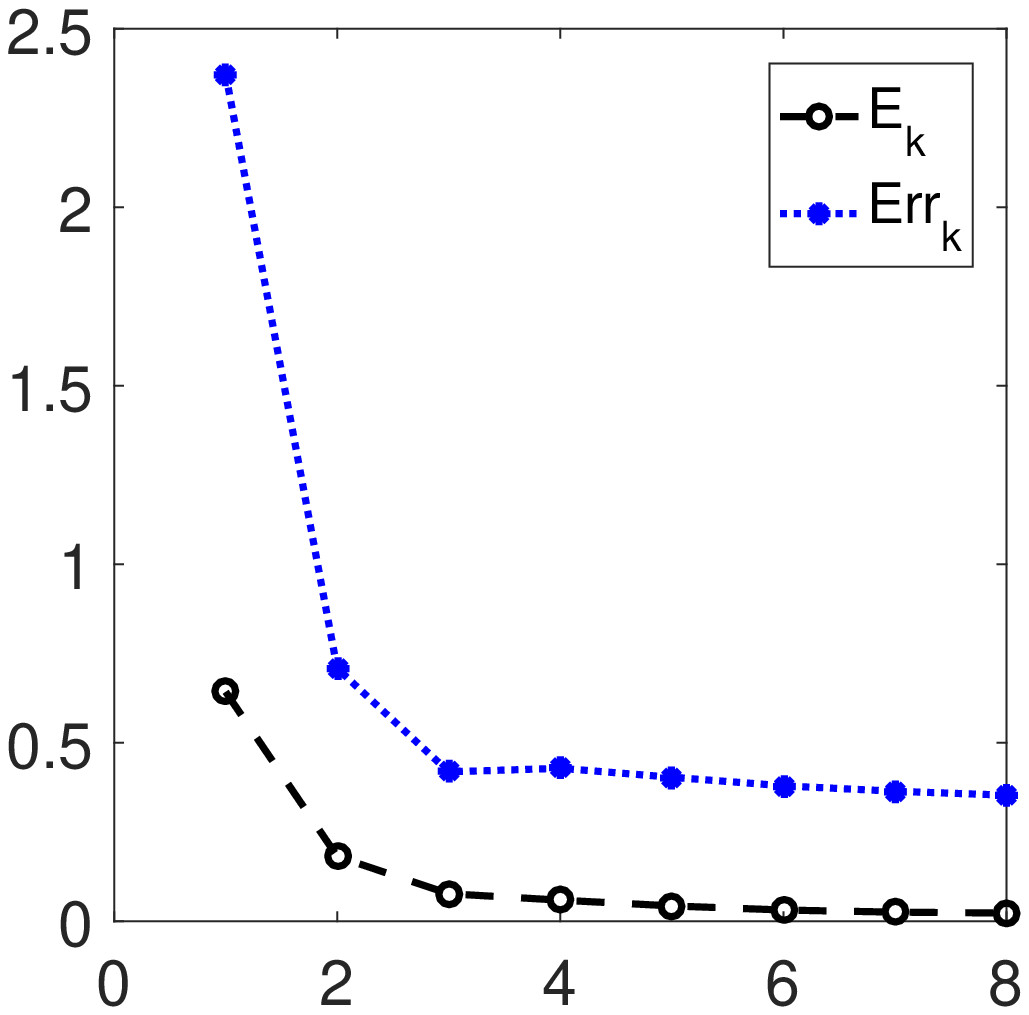}}
\caption{Reconstructions of a peanut-shaped obstacle with phased data at
different
levels of noise (see example 1). The initial guess is given by
$(c_1^{(0)},c_2^{(0)})=(0.75,
-0.55), r^{(0)}=0.3$ and the incident angle $\theta=7\pi/6$.}\label{IOSP-5}
\end{figure}

\begin{figure}
\centering 
\includegraphics[width=0.4\textwidth]{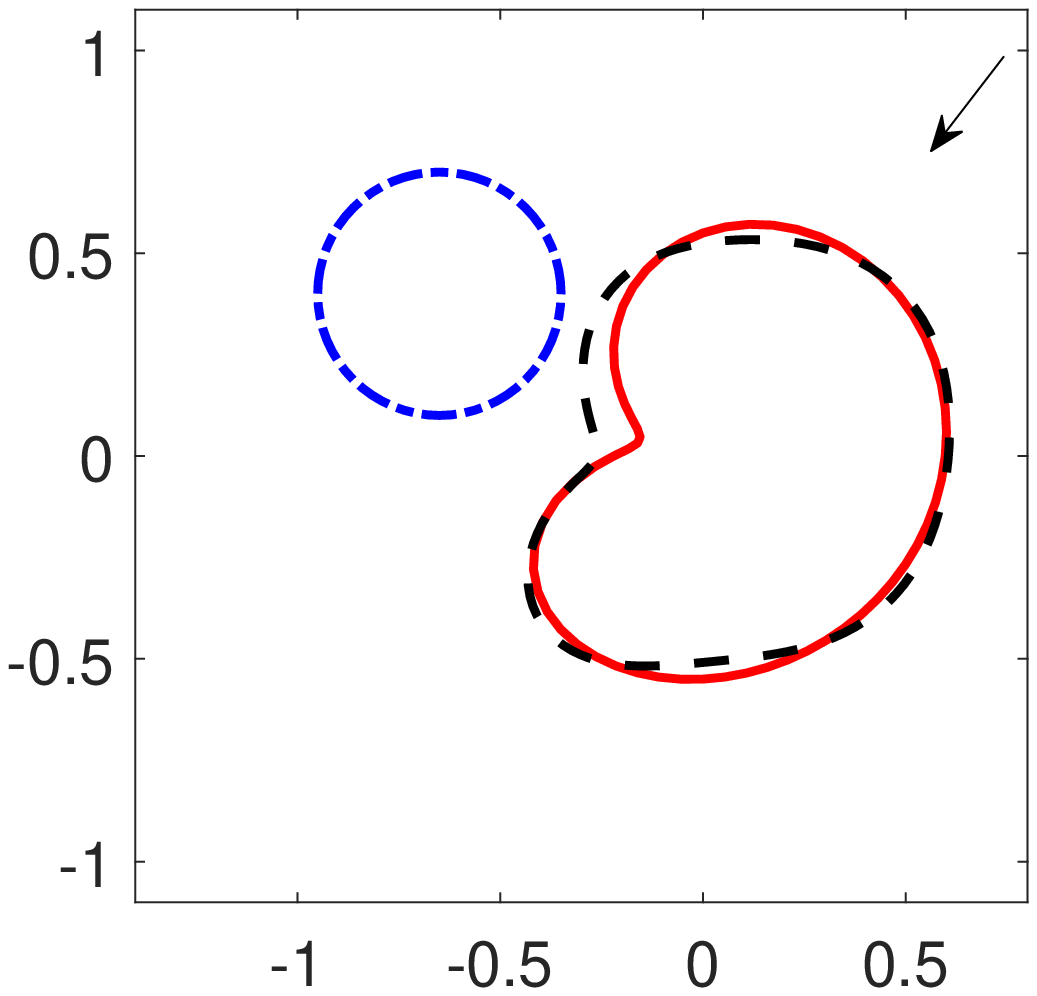}
\includegraphics[width=0.4\textwidth]{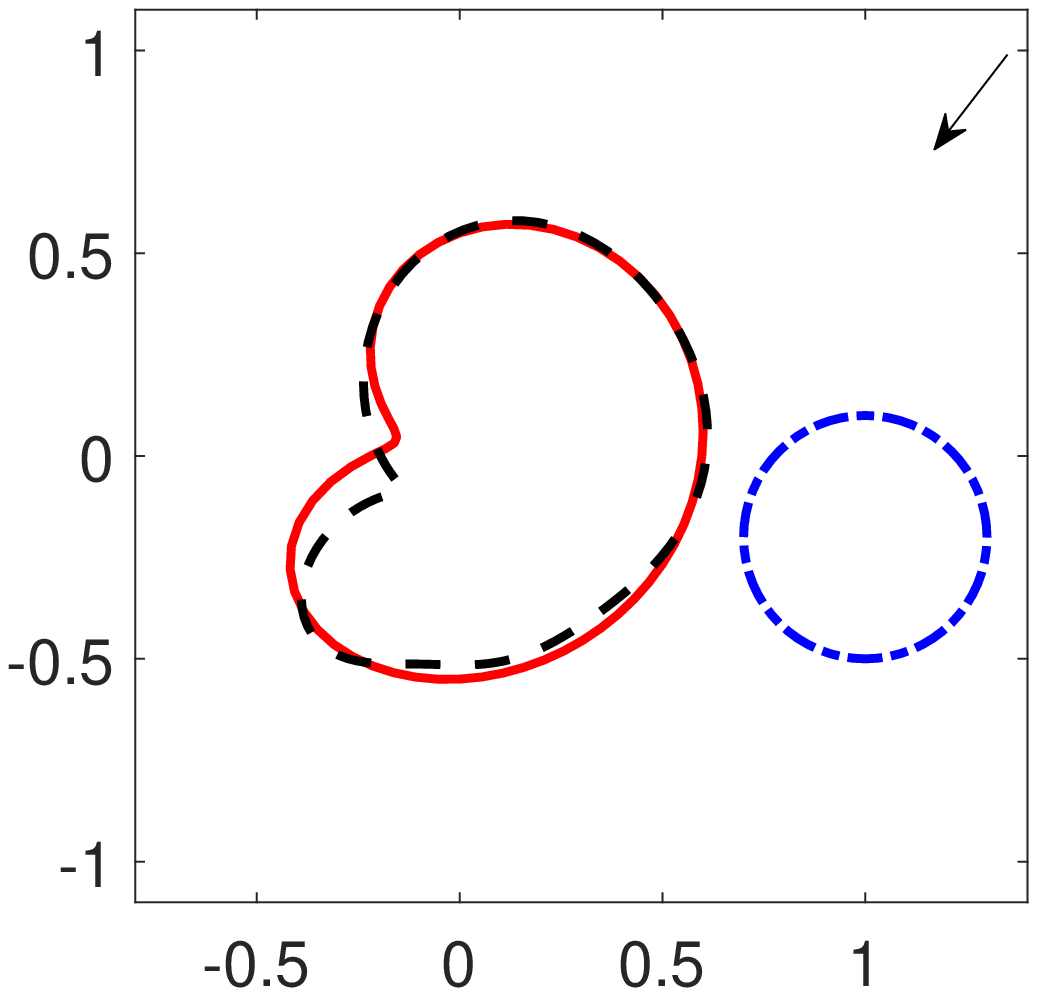}
\caption{Reconstructions of an apple-shaped obstacle with different
initial guesses, where $1\%$ noise is added and the incident angle
$\theta=4\pi/3$ (see example 1). (left) $(c_1^{(0)},c_2^{(0)})=(-0.65, 0.4)$,
$r^{(0)}=0.3$,
$\epsilon=0.015$; (right) $(c_1^{(0)},c_2^{(0)})=(1, -0.2)$, $r^{(0)}=0.3$,
$\epsilon=0.01$.}\label{IOSP-3}
\end{figure}

\begin{figure}
\centering 
\includegraphics[width=0.4\textwidth]{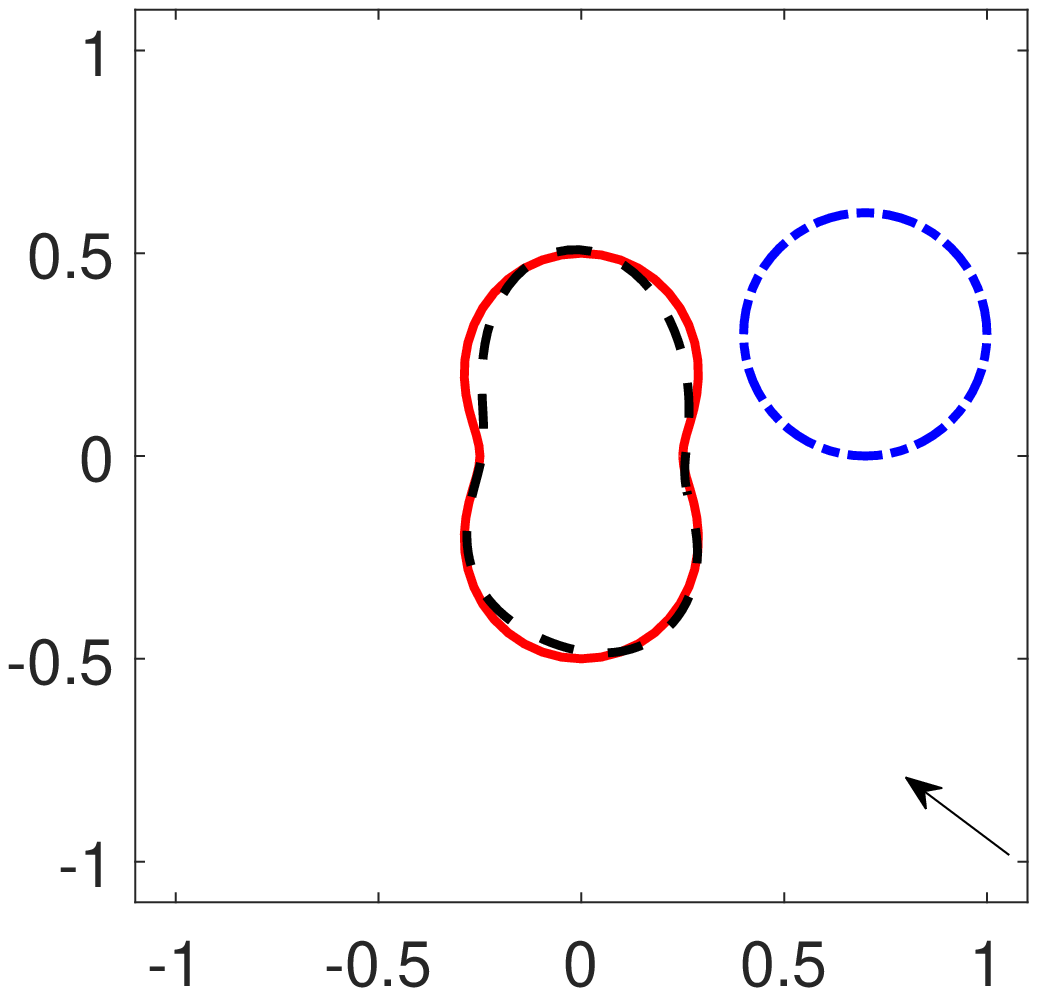}
\includegraphics[width=0.4\textwidth]{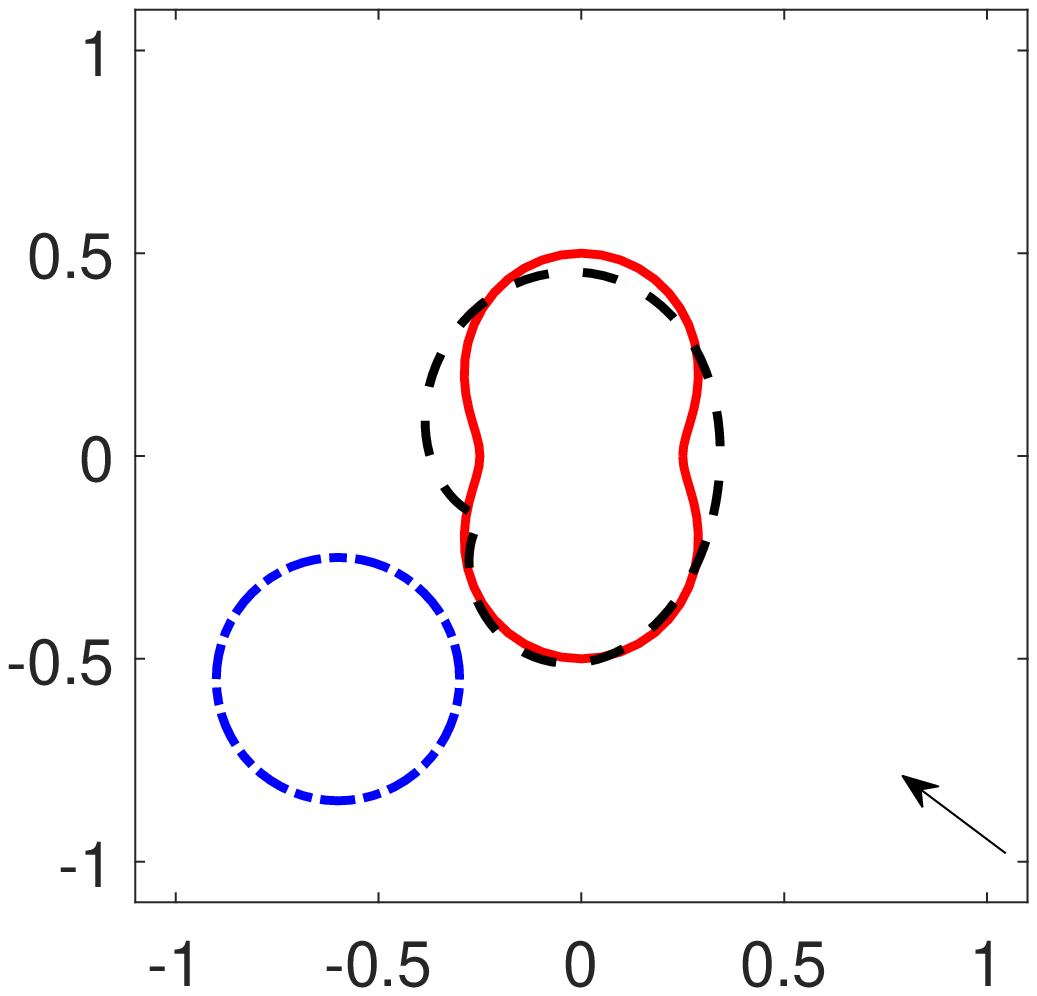}
\caption{Reconstructions of a penut-shaped obstacle with different
initial guesses, where $1\%$ noise is added and the incident angle
$\theta=7\pi/4$ (see example 1). (left) $(c_1^{(0)},c_2^{(0)})=(0.7, 0.3)$,
$r^{(0)}=0.3$,
$\epsilon=0.01$; (right) $(c_1^{(0)},c_2^{(0)})=(-0.6, -0.55)$, $r^{(0)}=0.3$,
$\epsilon=0.025$.}\label{IOSP-6}
\end{figure}

\begin{figure}
\centering 
\includegraphics[width=0.4\textwidth]{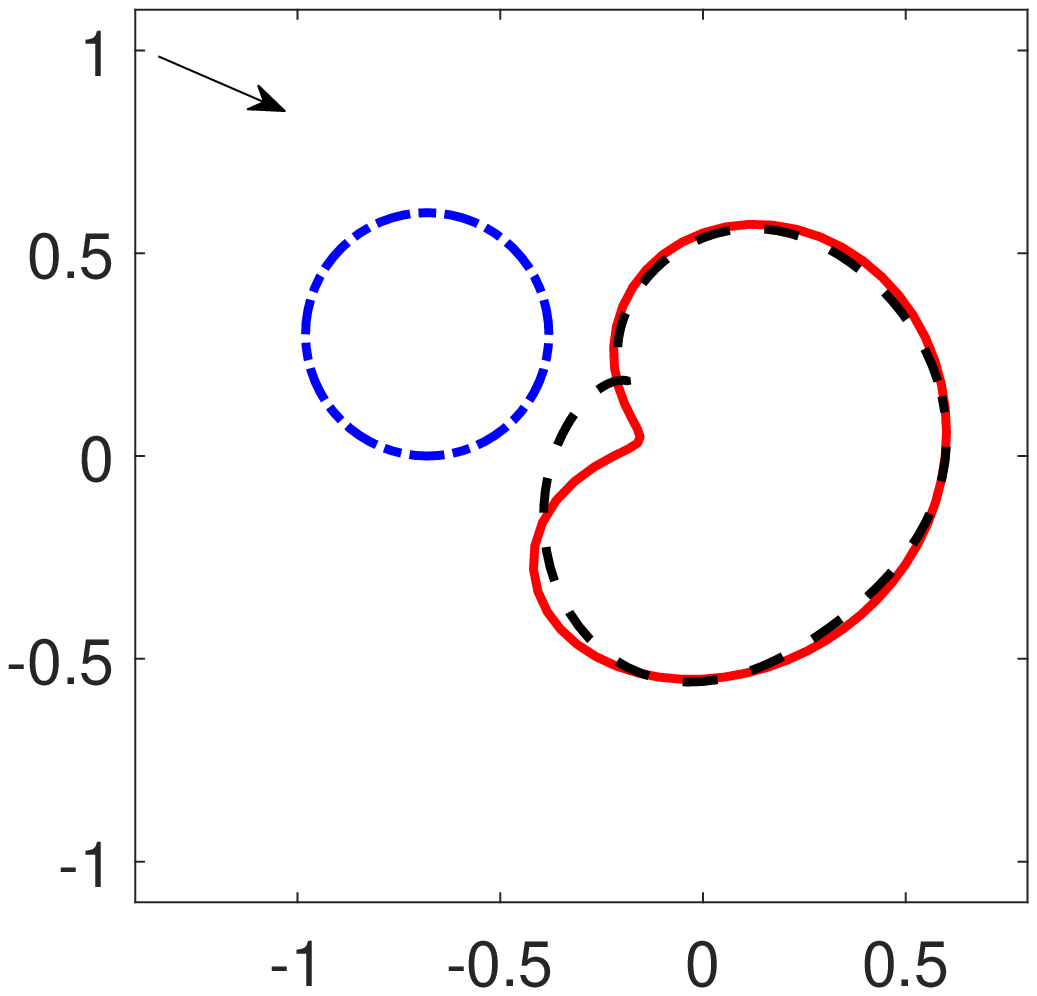}
\includegraphics[width=0.4\textwidth]{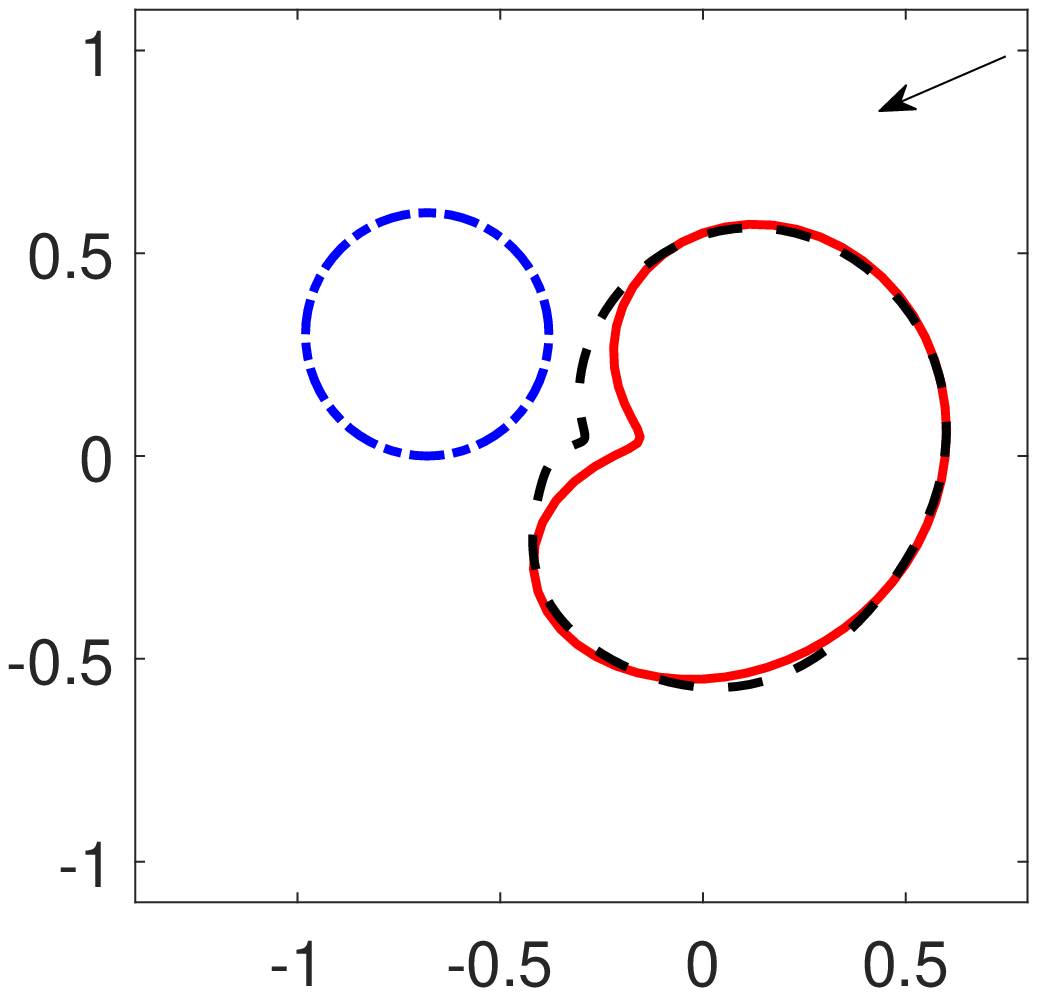}
\caption{Reconstructions of an apple-shaped obstacle with different incident
directions, where $1\%$ noise is added and the initial guess
is given by $(c_1^{(0)},c_2^{(0)})=(-0.7, 0.3)$, $r^{(0)}=0.3$ (see example 1).
(left) incident
angle $\theta=11\pi/6$, $\epsilon=0.01$; (right) incident angle $\theta=7\pi/6$,
$\epsilon=0.01$.}\label{IOSP-4}
\end{figure}

\begin{figure}
\centering 
\includegraphics[width=0.4\textwidth]{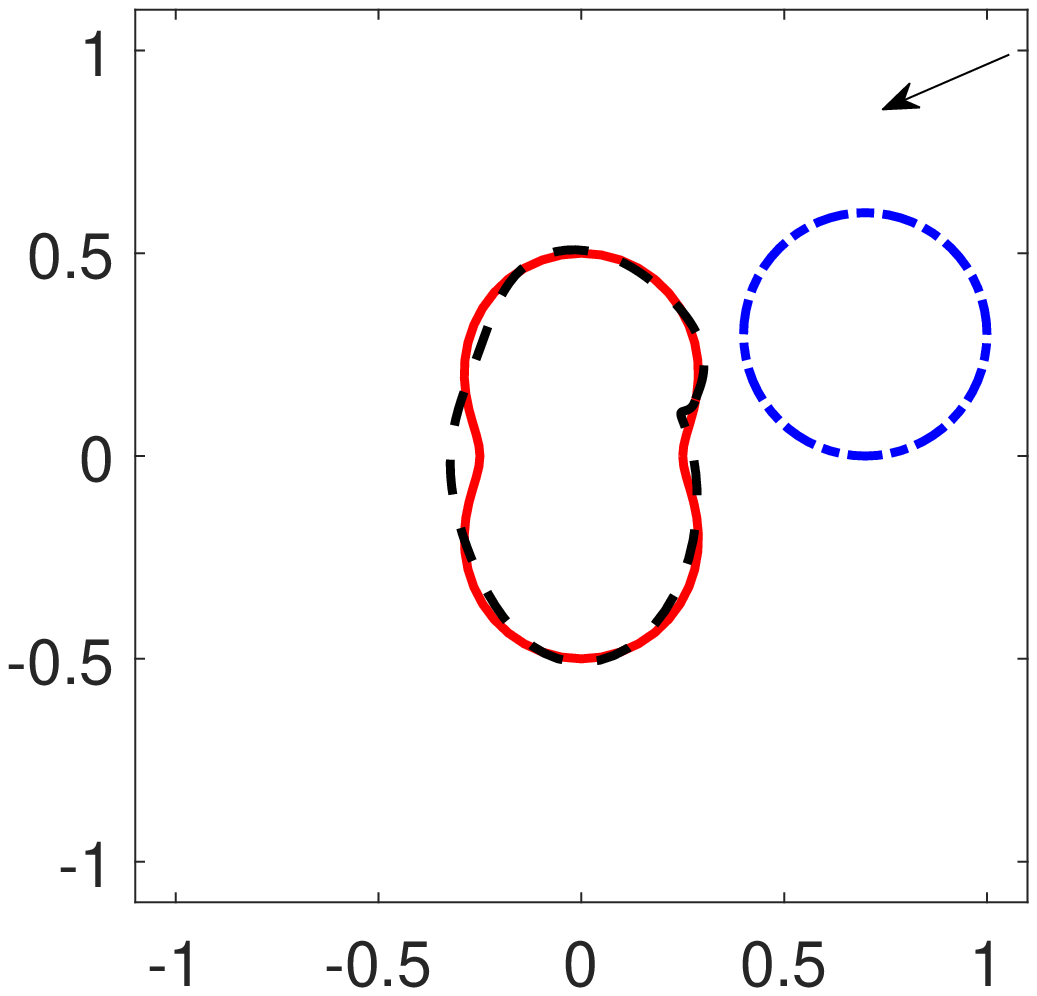}
\includegraphics[width=0.4\textwidth]{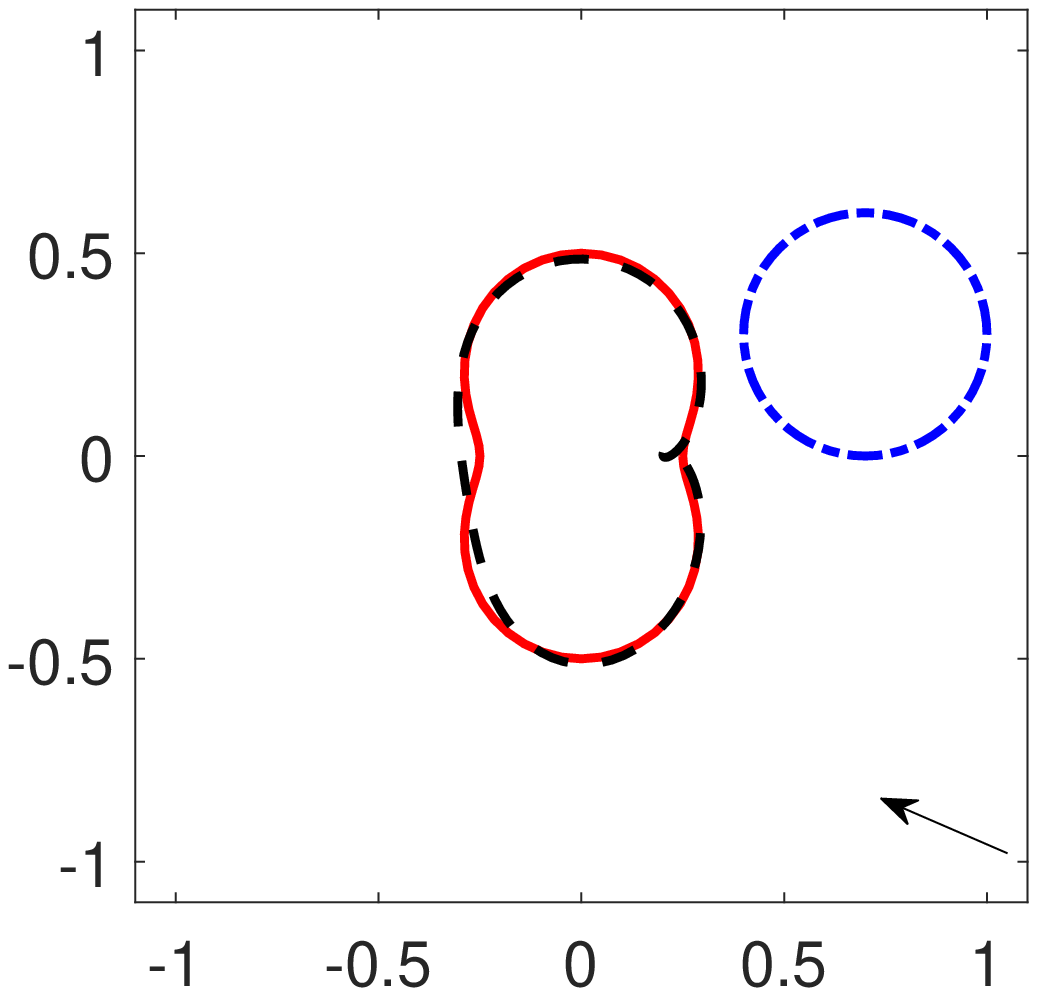}
\caption{Reconstructions of a peanut-shaped obstacle with different incident
directions, where $1\%$ noise is added and the initial guess
is given by $(c_1^{(0)},c_2^{(0)})=(0.7, 0.3)$, $r^{(0)}=0.3$ (see example 1).
(a) incident
angle $\theta=7\pi/6$, $\epsilon=0.006$; (b) incident angle $\theta=5\pi/6$,
$\epsilon=0.006$.}\label{IOSP-7}
\end{figure}

\begin{figure}
\centering 
\subfigure[Reconstruction with $1\%$ noise, $\epsilon=0.005$]
{\includegraphics[width=0.4\textwidth]{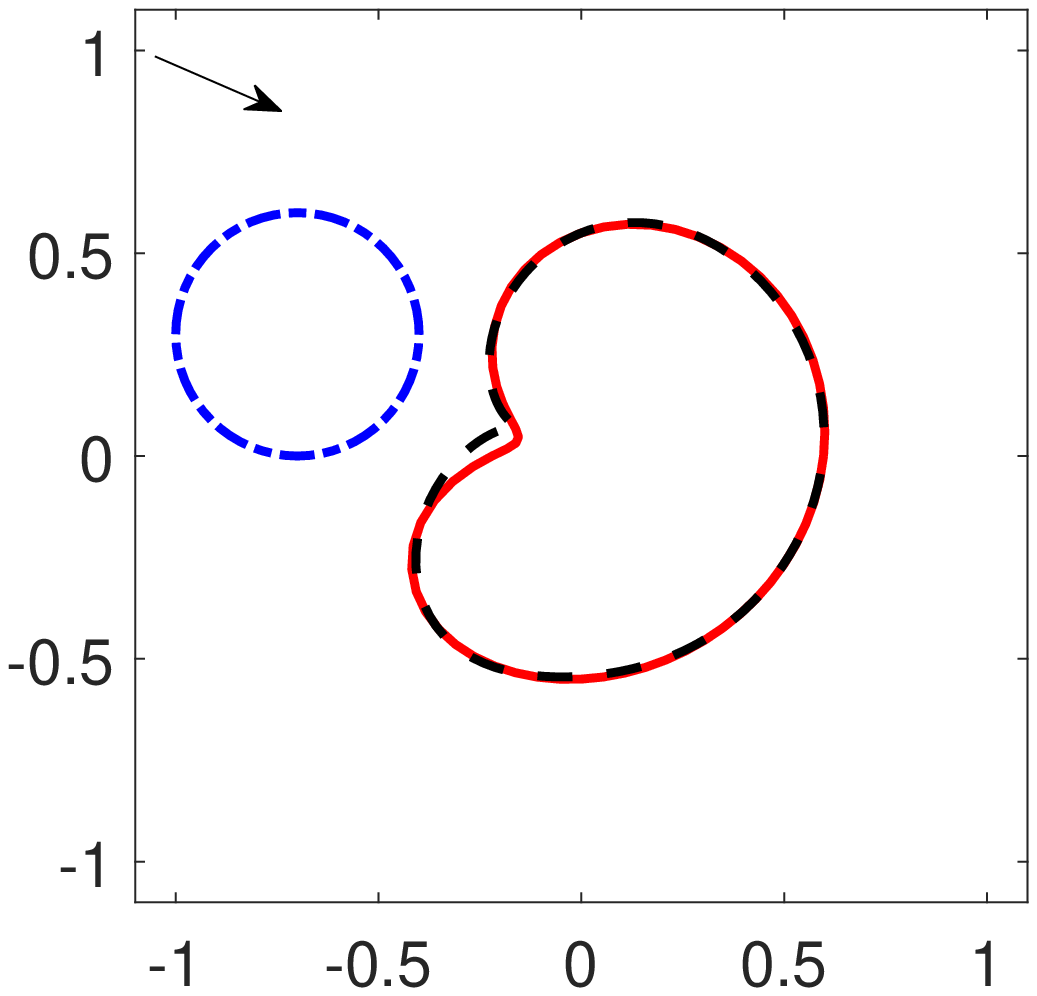}}
\subfigure[Relative error with $1\%$ noise]
{\includegraphics[width=0.4\textwidth]{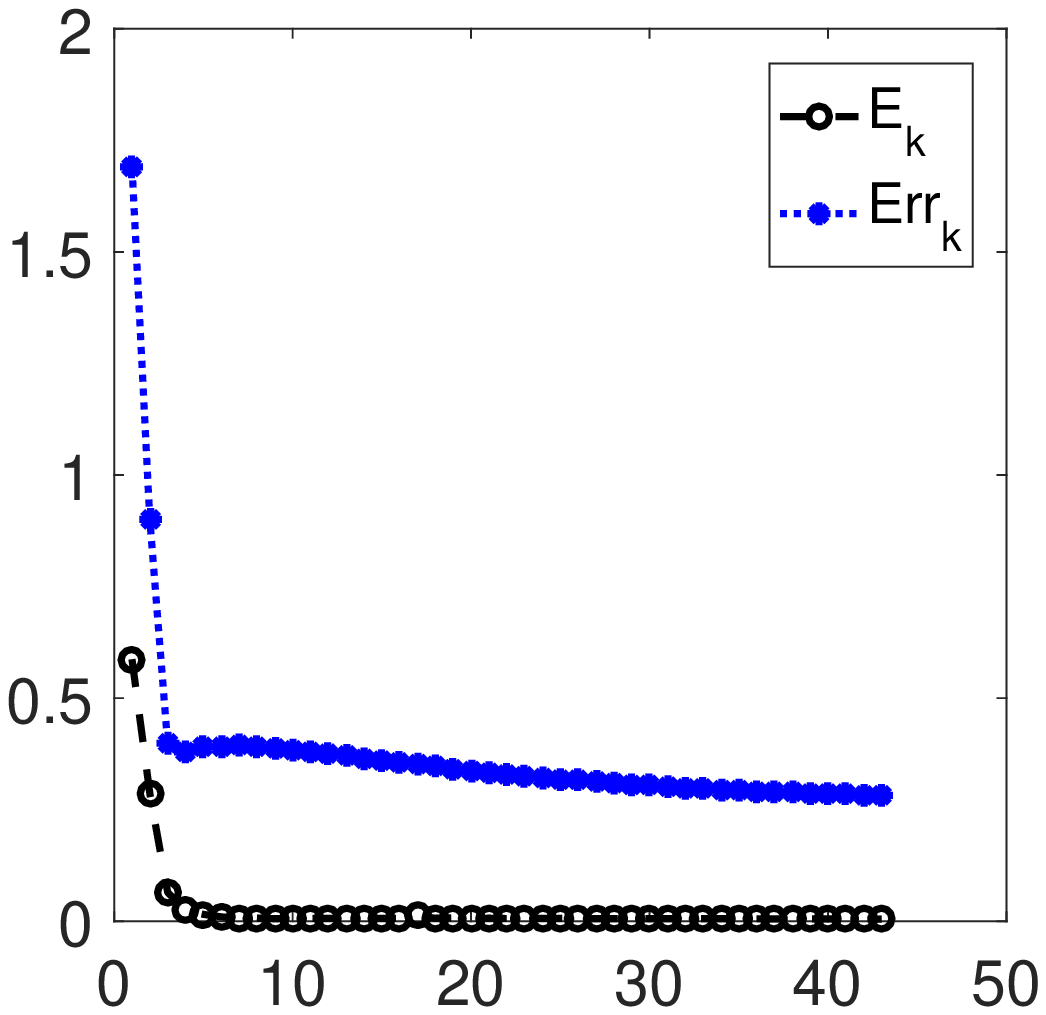}}
\subfigure[Reconstruction with $5\%$ nois, $\epsilon=0.025$]
{\includegraphics[width=0.4\textwidth]{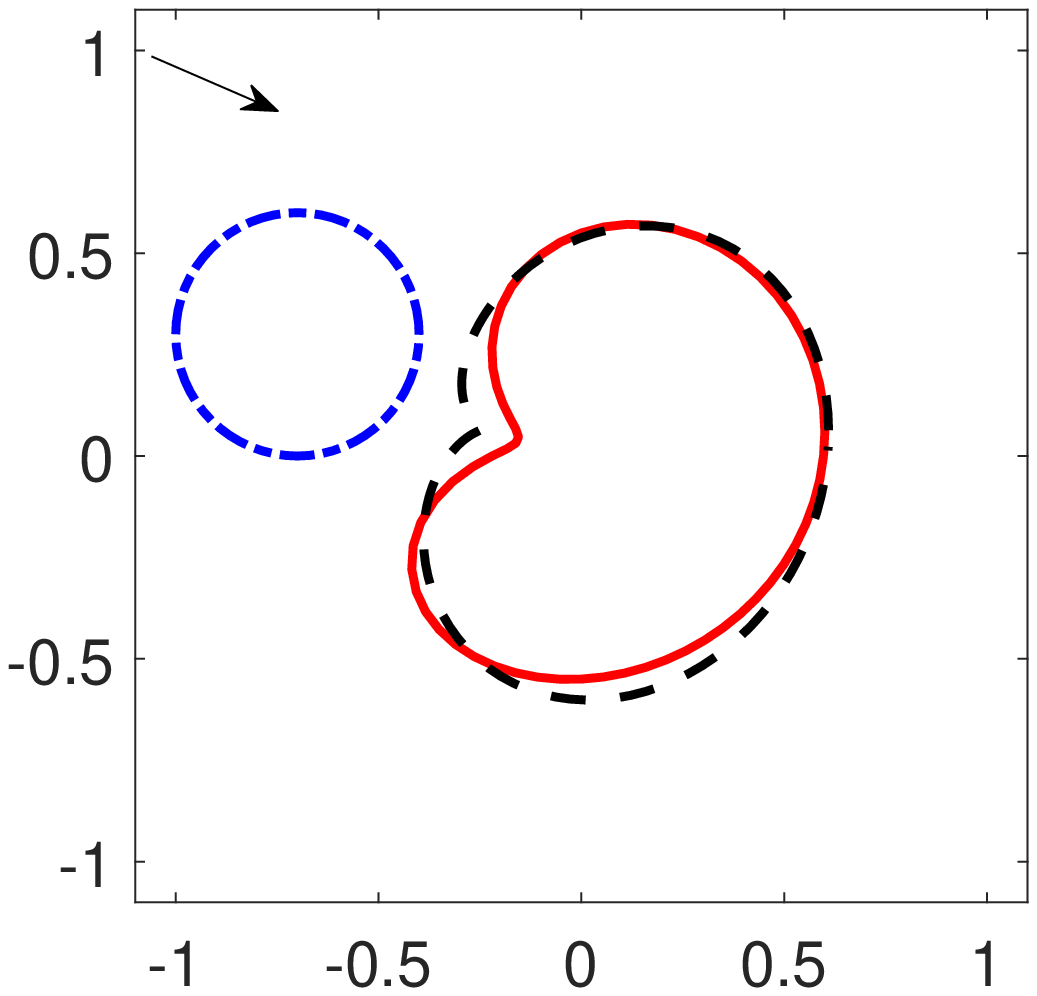}}
\subfigure[Relative error with $5\%$ noise]
{\includegraphics[width=0.4\textwidth]{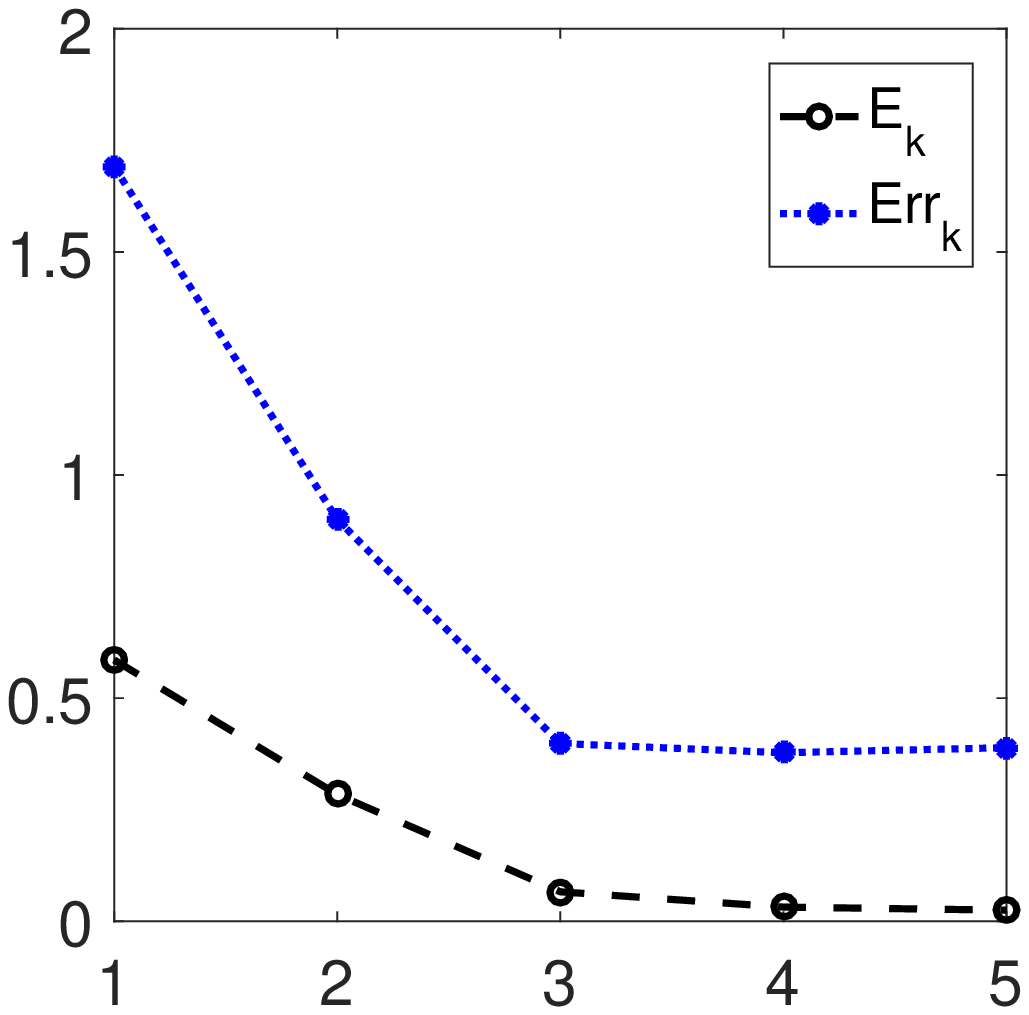}}
\caption{Reconstructions of an apple-shaped obstacle with phased data at
different
levels of noise and a reference ball (see example 2). The initial guess is given
by $(c_1^{(0)},c_2^{(0)})=(-0.7,
0.3), r^{(0)}=0.3$, the incident angle $\theta=11\pi/6$, and the reference
ball is $(b_1,b_2)=(5, 0), R=0.5$.}\label{IOSPreference-8}
\end{figure}

\begin{figure}
\centering 
\subfigure[Reconstruction with $1\%$ noise, $\epsilon=0.006$ ]
{\includegraphics[width=0.4\textwidth]{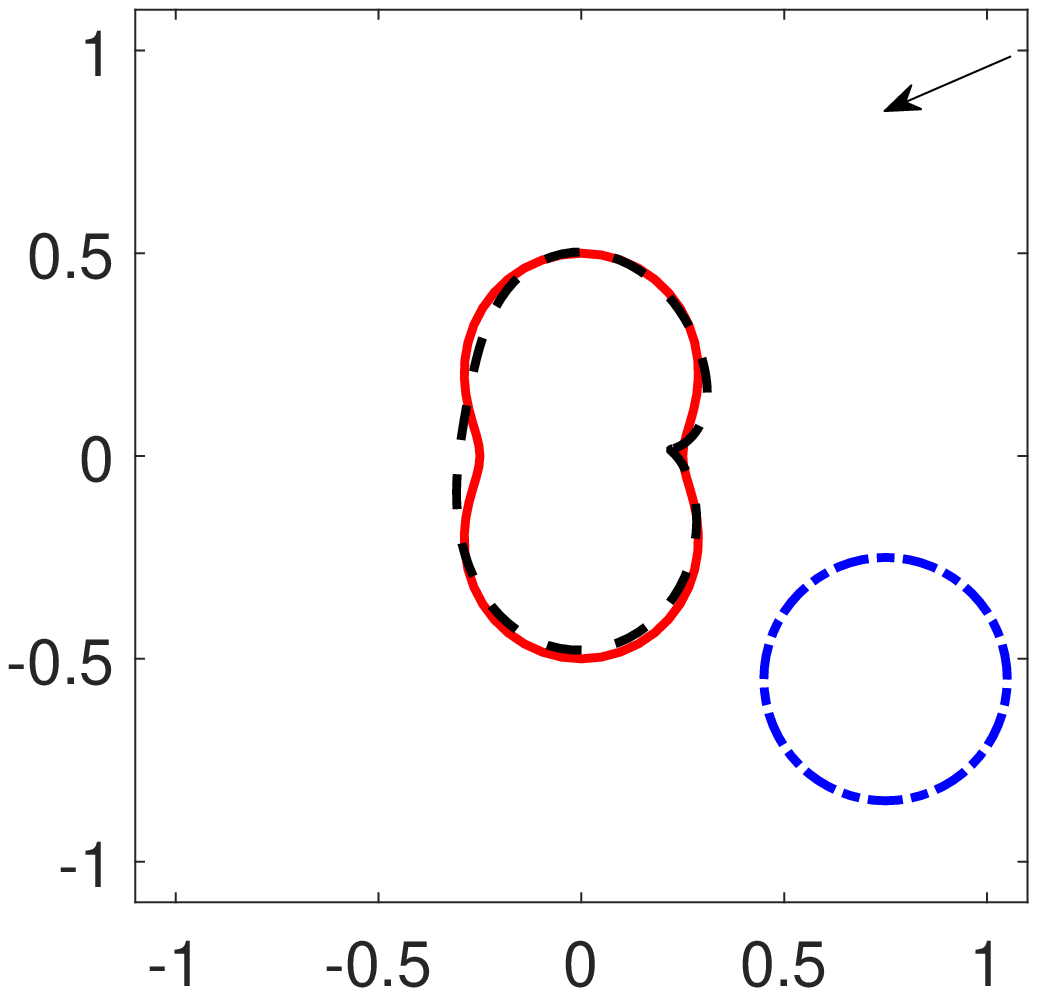}}
\subfigure[Relative error with $1\%$ noise]
{\includegraphics[width=0.4\textwidth]{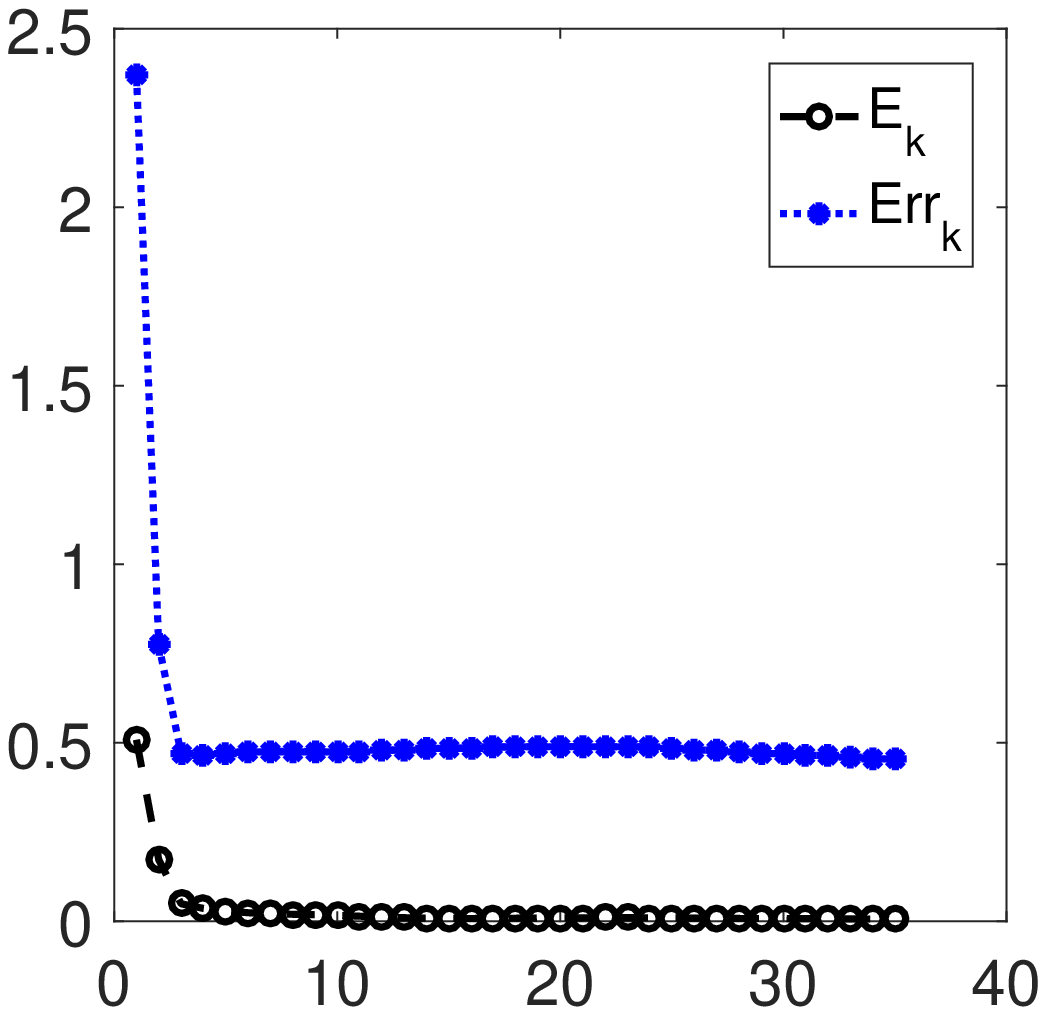}}
\subfigure[Reconstruction with $5\%$ nois, $\epsilon=0.025$]
{\includegraphics[width=0.4\textwidth]{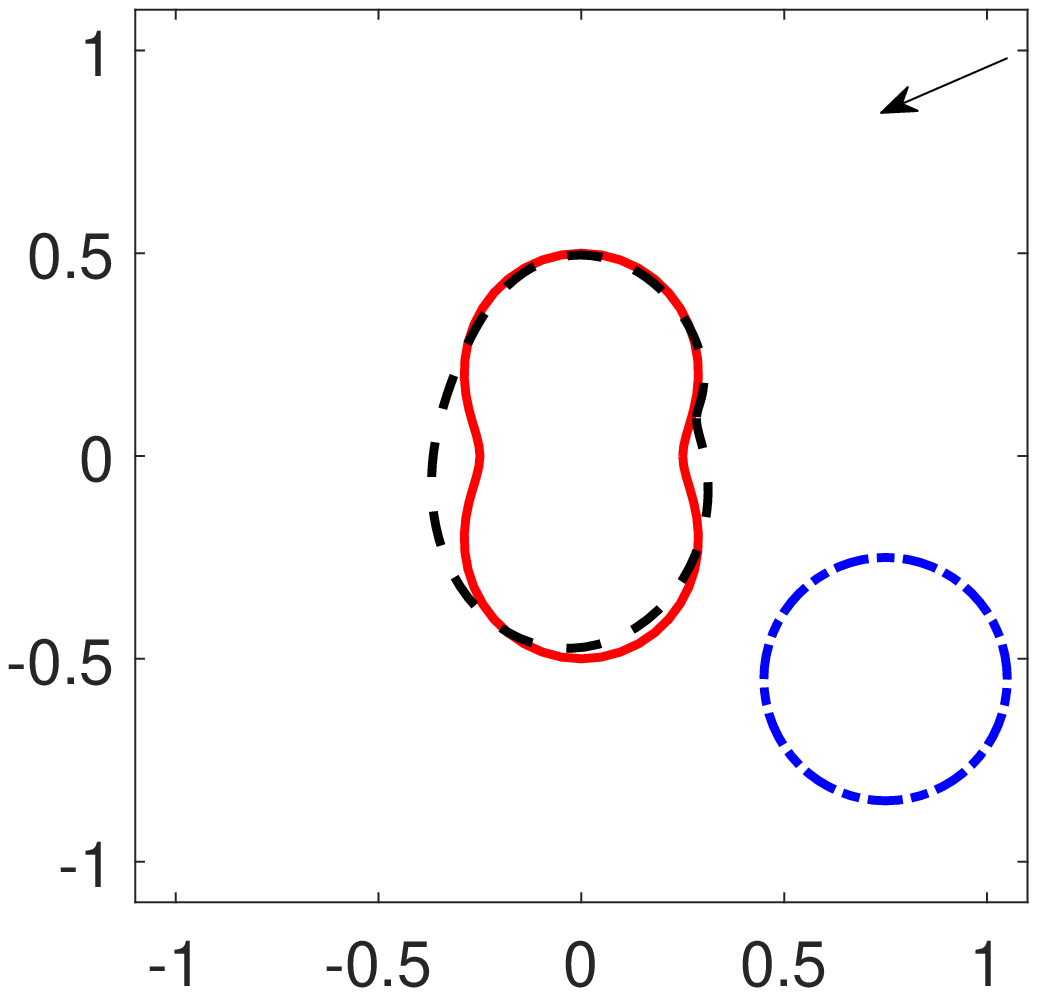}}
\subfigure[Relative error with $5\%$ noise]
{\includegraphics[width=0.4\textwidth]{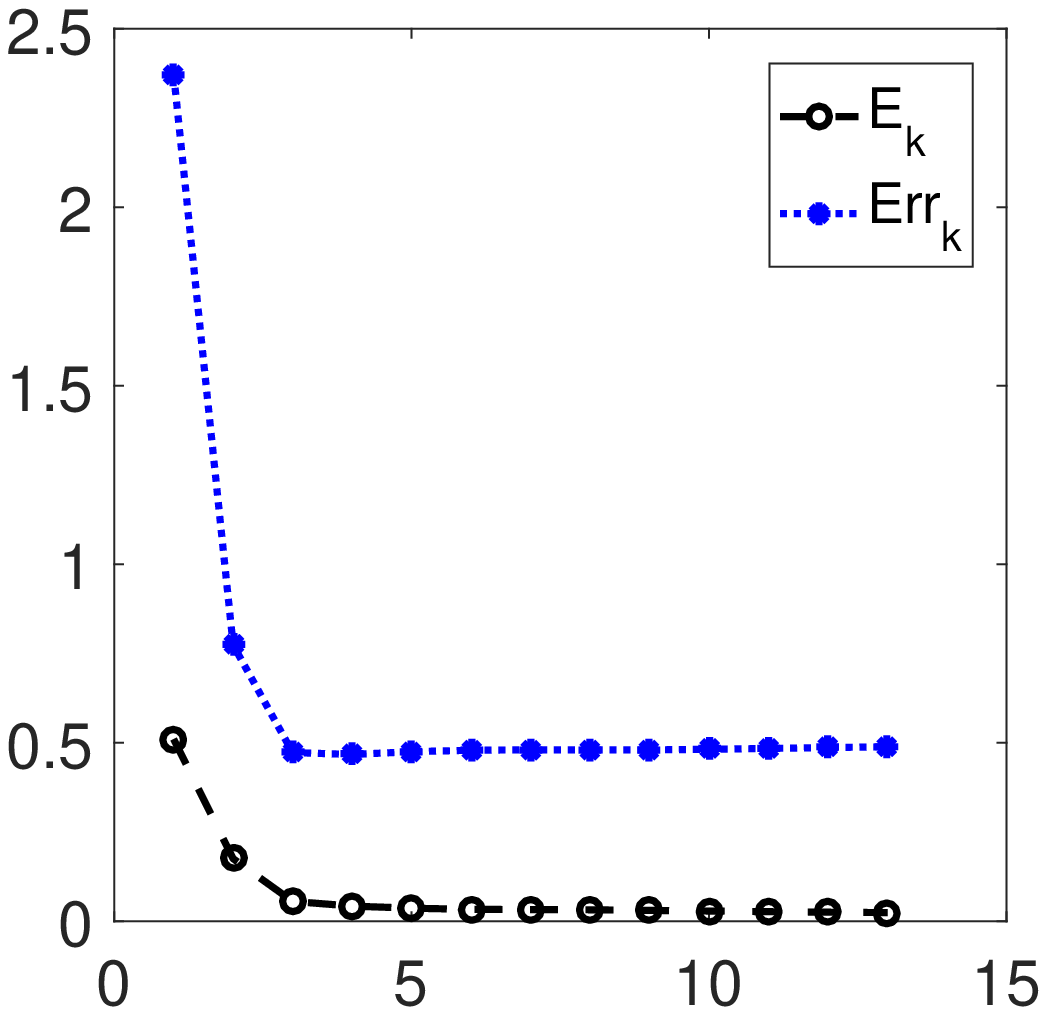}}
\caption{Reconstructions of a peanut-shaped obstacle with phased data at
different
levels of noise and a reference ball (see example 2). The initial guess is given
by $(c_1^{(0)},c_2^{(0)})=(0.75,
-0.55), r^{(0)}=0.3$, the incident angle $\theta=7\pi/6$, and the reference
ball is $(b_1,b_2)=(9, 0), R=0.5$.}\label{IOSPreference-12}
\end{figure}

\begin{figure}
\centering 
\includegraphics[width=0.4\textwidth]{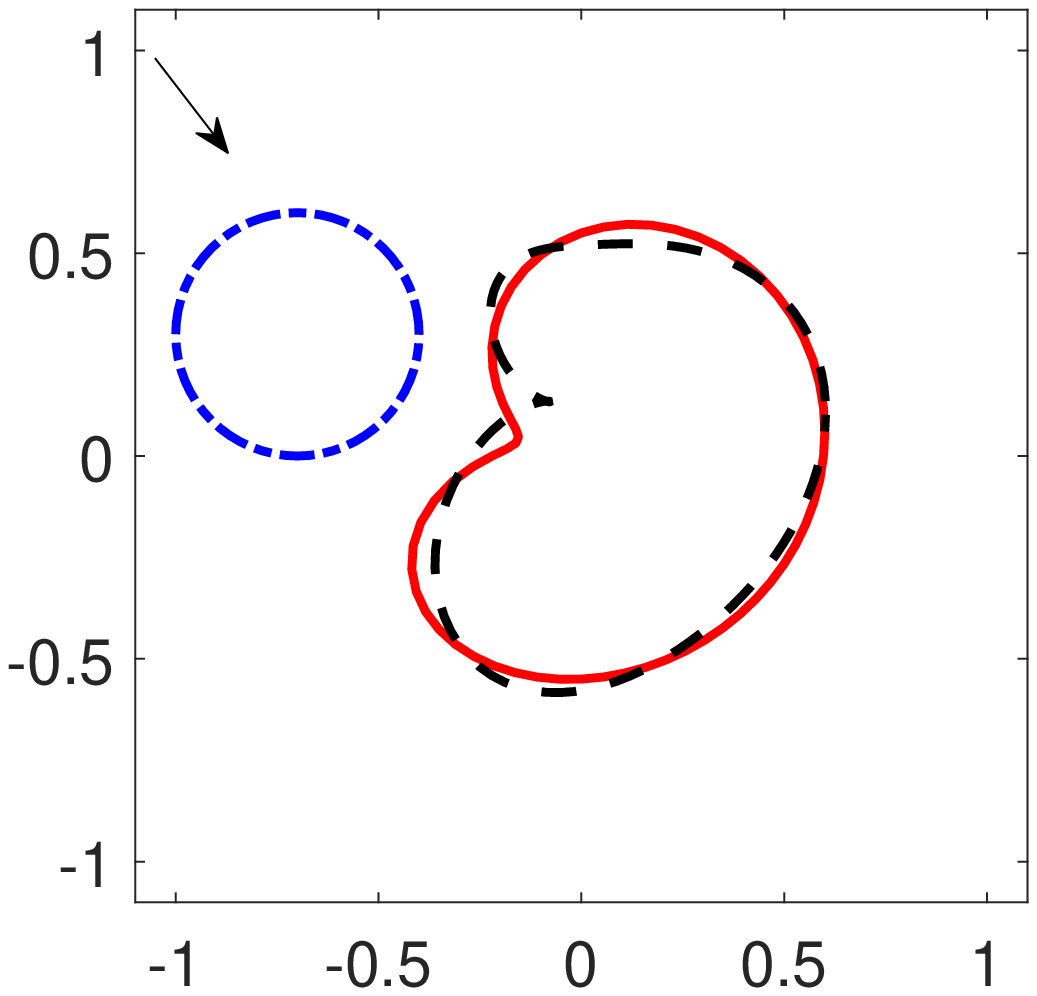}
\includegraphics[width=0.4\textwidth]{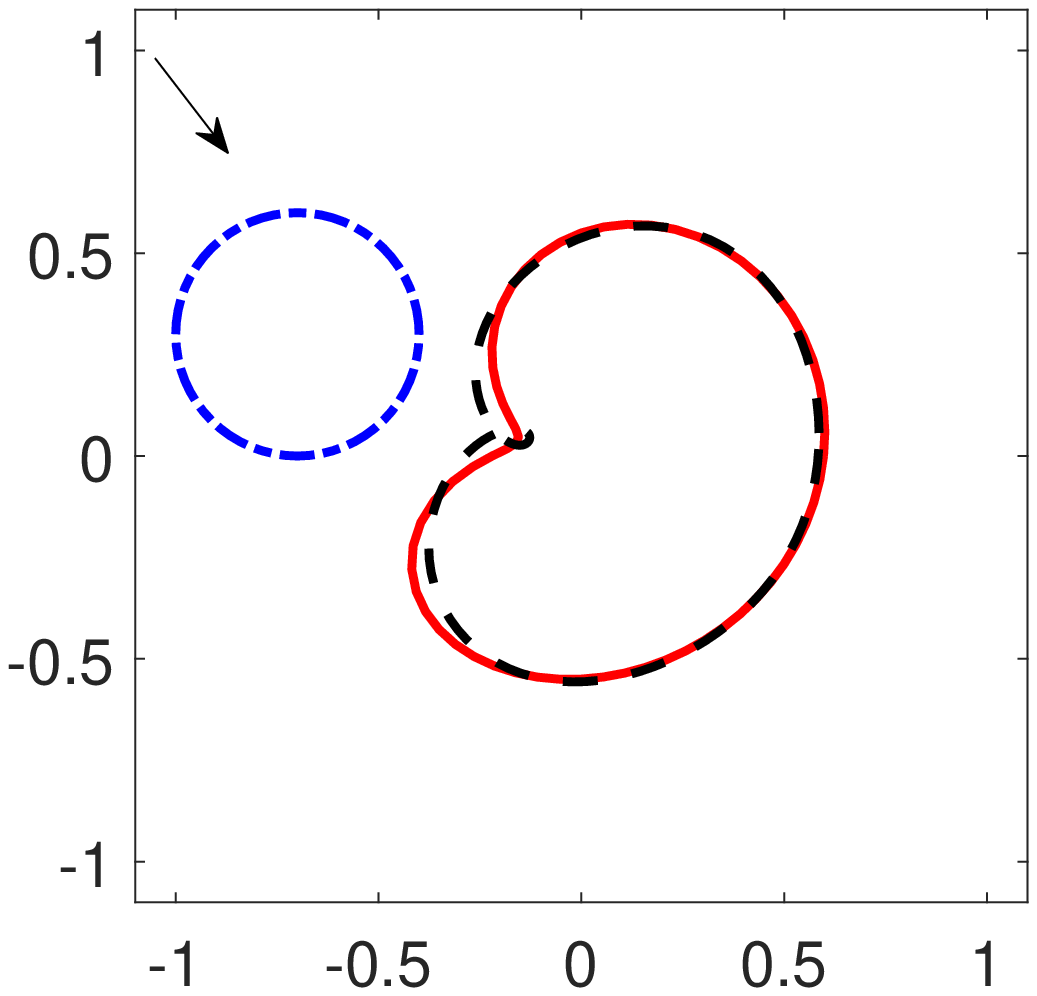}
\caption{Reconstructions of an apple-shaped obstacle with different reference
balls, where $1\%$ noise is added, the inciedent angle $\theta=5\pi/3$, and
the initial guess is given by $(c_1^{(0)},c_2^{(0)})=(-0.7, 0.3), r^{(0)}=0.3$
(see example 2).
(left) $(b_1, b_2)=(5, 0), R=0.4$, $\epsilon=0.01$; (right) $(b_1, b_2)=(6, 0),
R=0.9$, $\epsilon=0.006$.}\label{IOSPreference-11}
\end{figure}

\begin{figure}
\centering 
\includegraphics[width=0.4\textwidth]{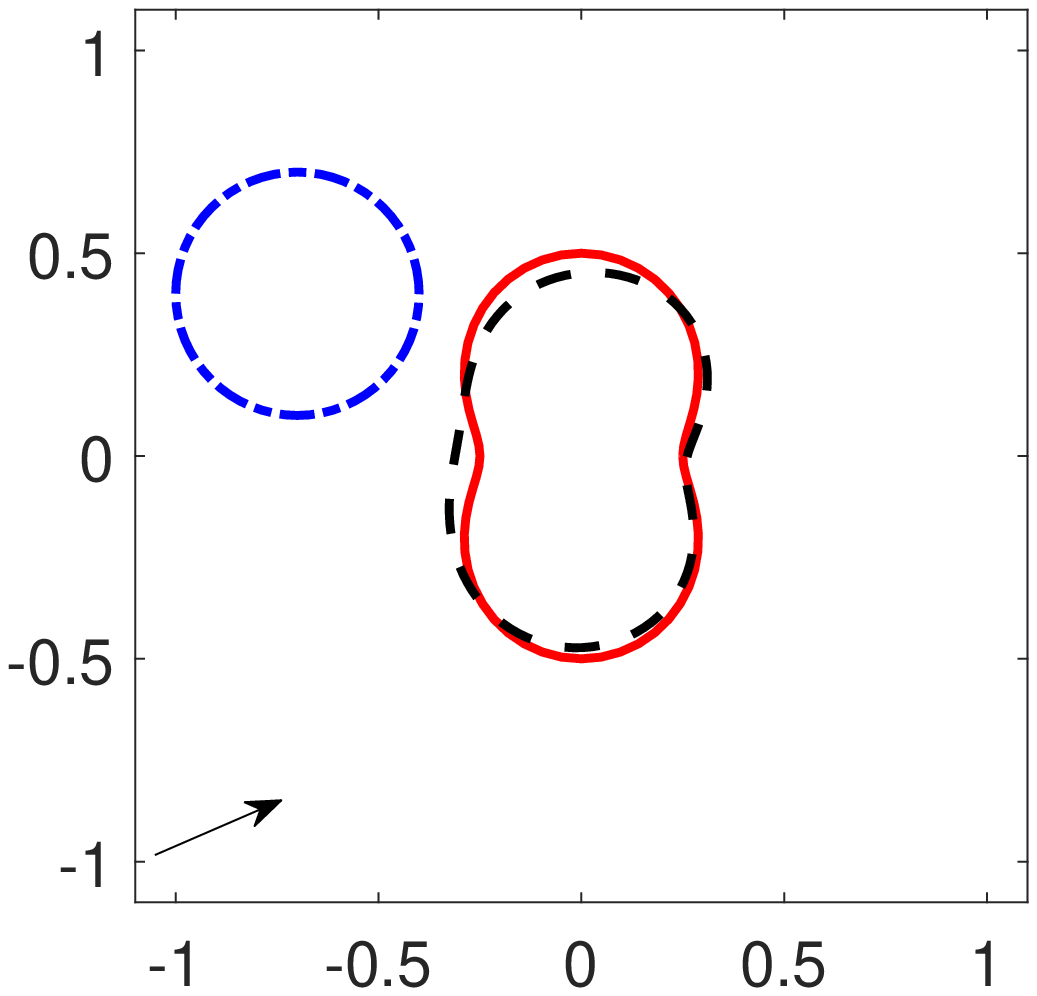}
\includegraphics[width=0.4\textwidth]{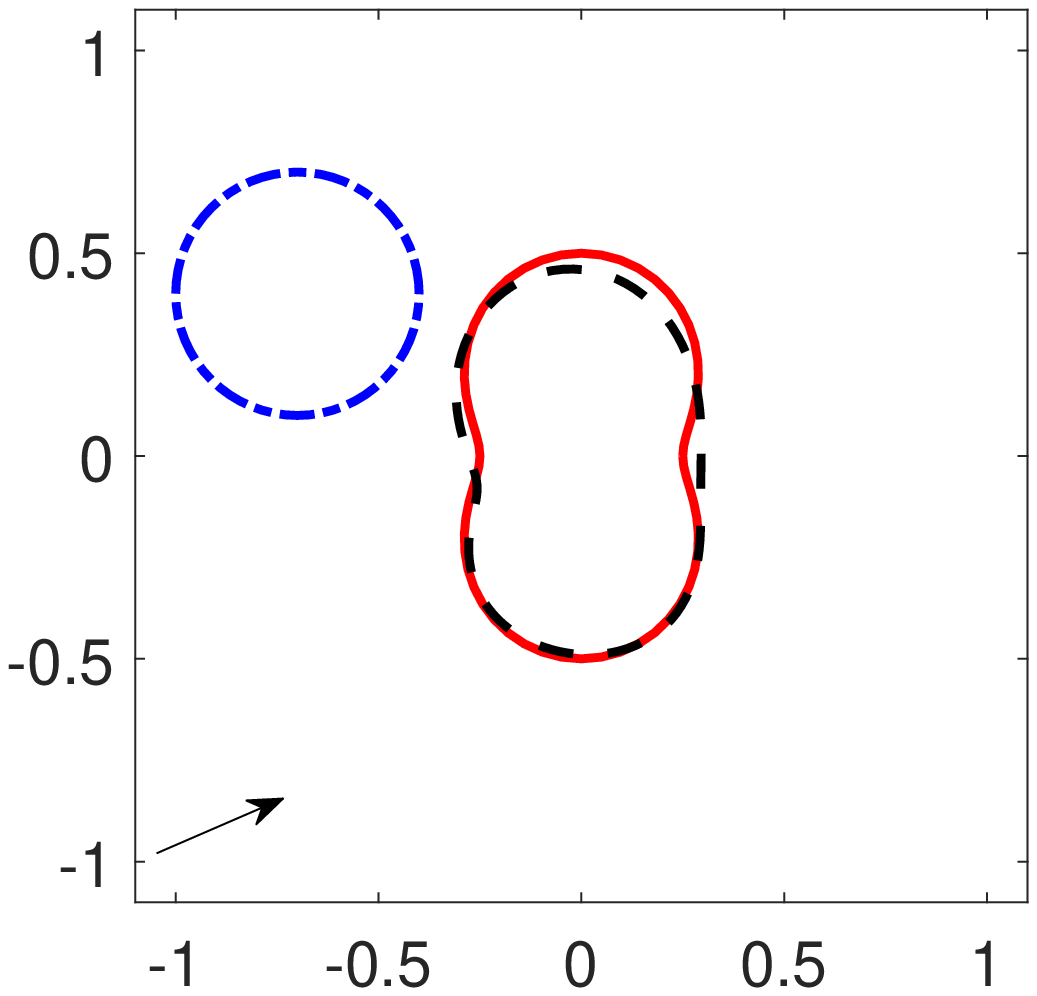}
\caption{Reconstructions of a peanut-shaped obstacle with different
reference balls, where $1\%$ noise is added, the incident angle $\theta=\pi/6$,
and the initial guess is given by $(c_1^{(0)},c_2^{(0)})=(-0.7, 0.4)$,
$r^{(0)}=0.3$ (see example 2). (left) $(b_1, b_2)=(7.5, 0), R=0.6$,
$\epsilon=0.01$; (right)
$(b_1, b_2)=(0, 7), R=0.6$, $\epsilon=0.01$.}\label{IOSPreference-14}
\end{figure}

\begin{figure}
\centering 
\subfigure[Recnstruction with $1\%$ noise, $\epsilon=0.005$ ]
{\includegraphics[width=0.4\textwidth]{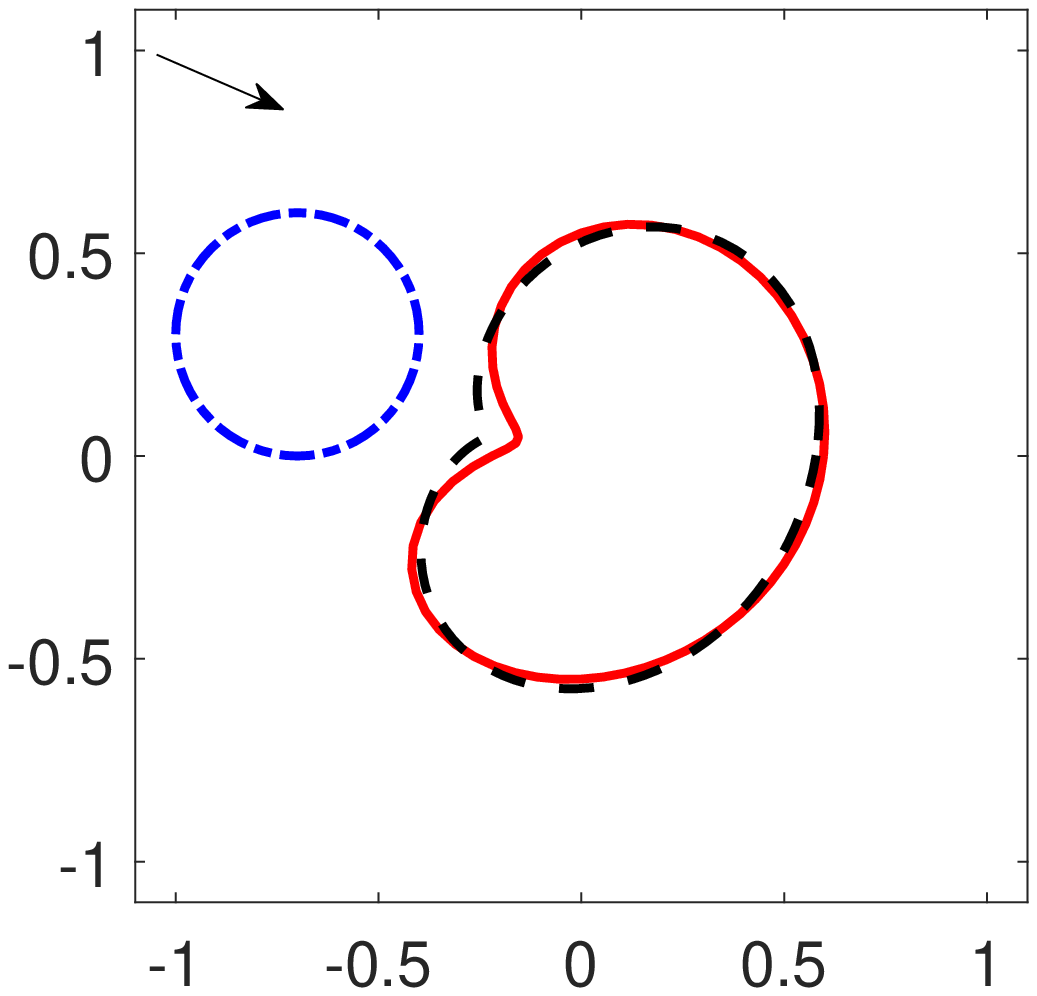}}
\subfigure[Relative error with $1\%$ noise]
{\includegraphics[width=0.4\textwidth]{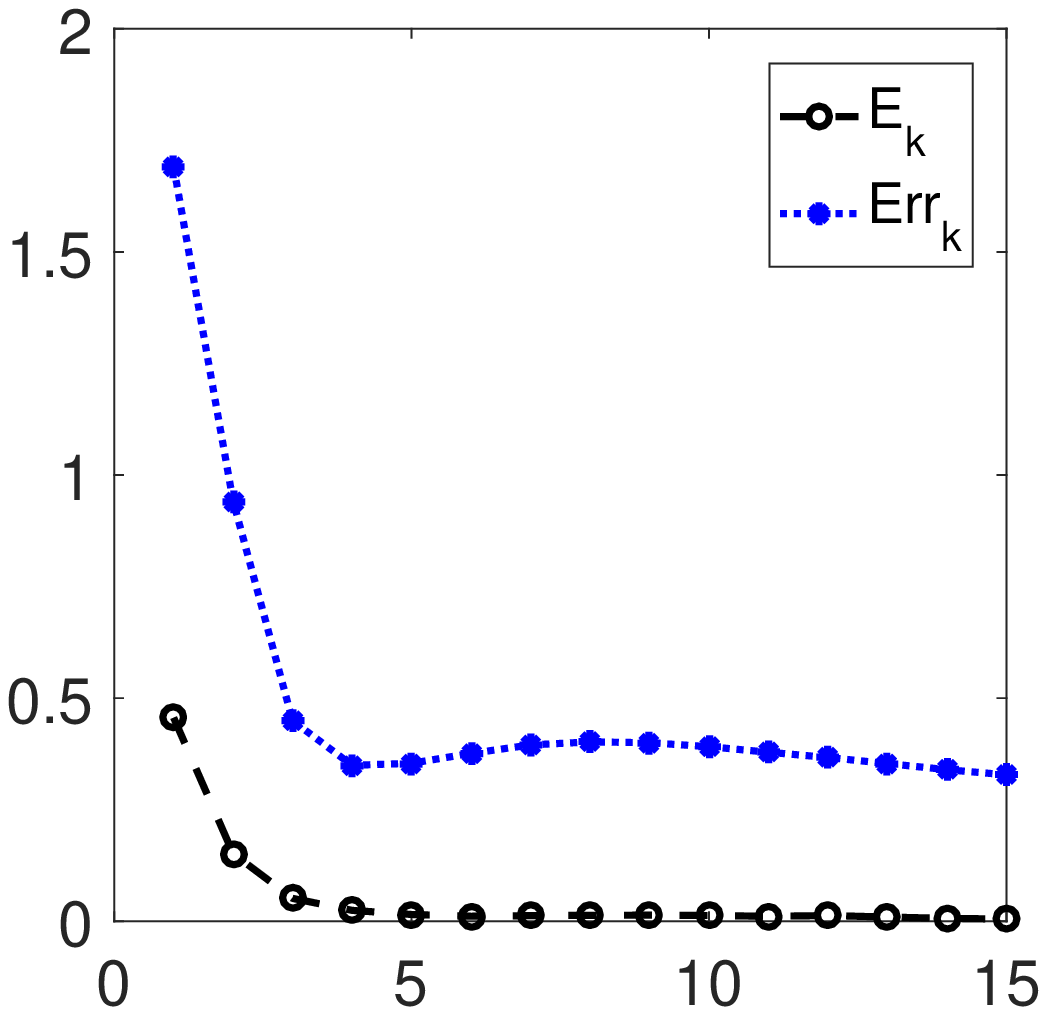}} 
\subfigure[Reconstruction with $5\%$ nois, $\epsilon=0.025$]
{\includegraphics[width=0.4\textwidth]{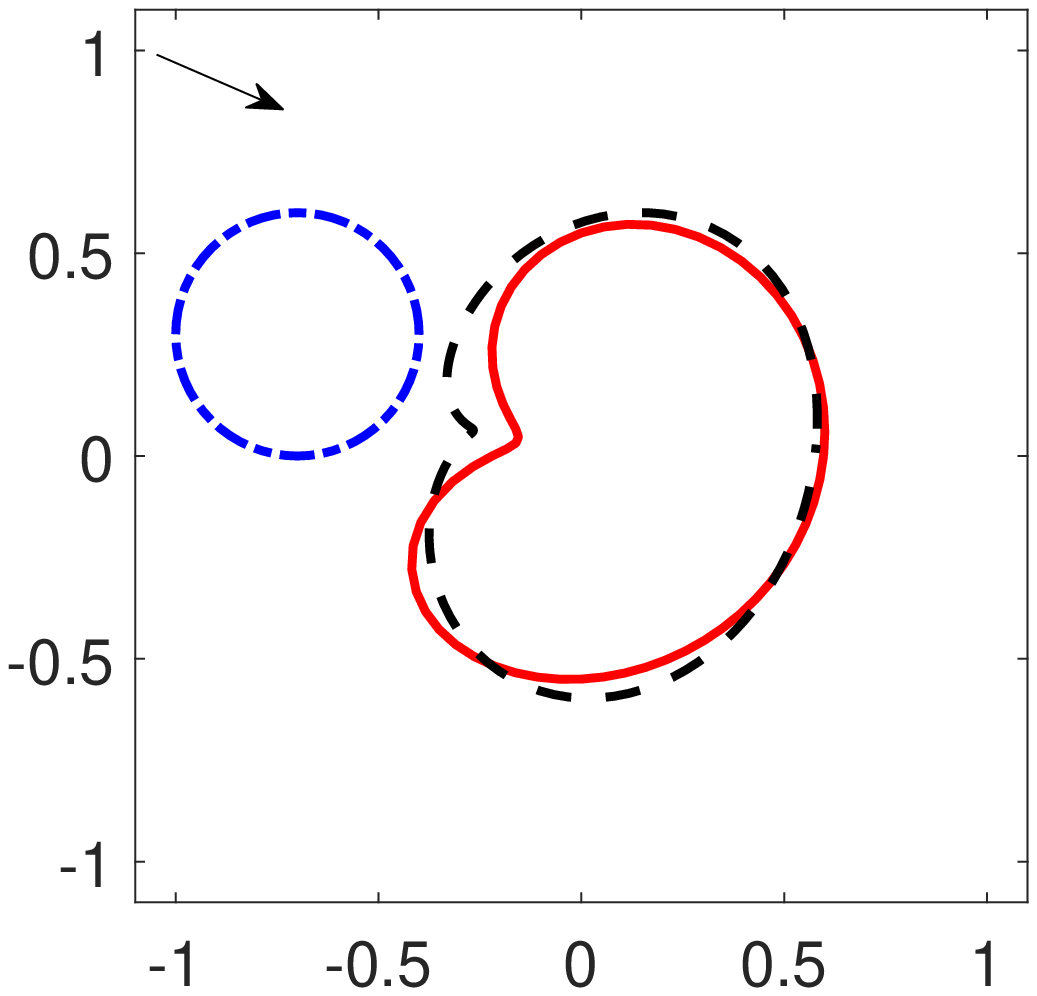}}
\subfigure[Relative error with $5\%$ noise]
{\includegraphics[width=0.4\textwidth]{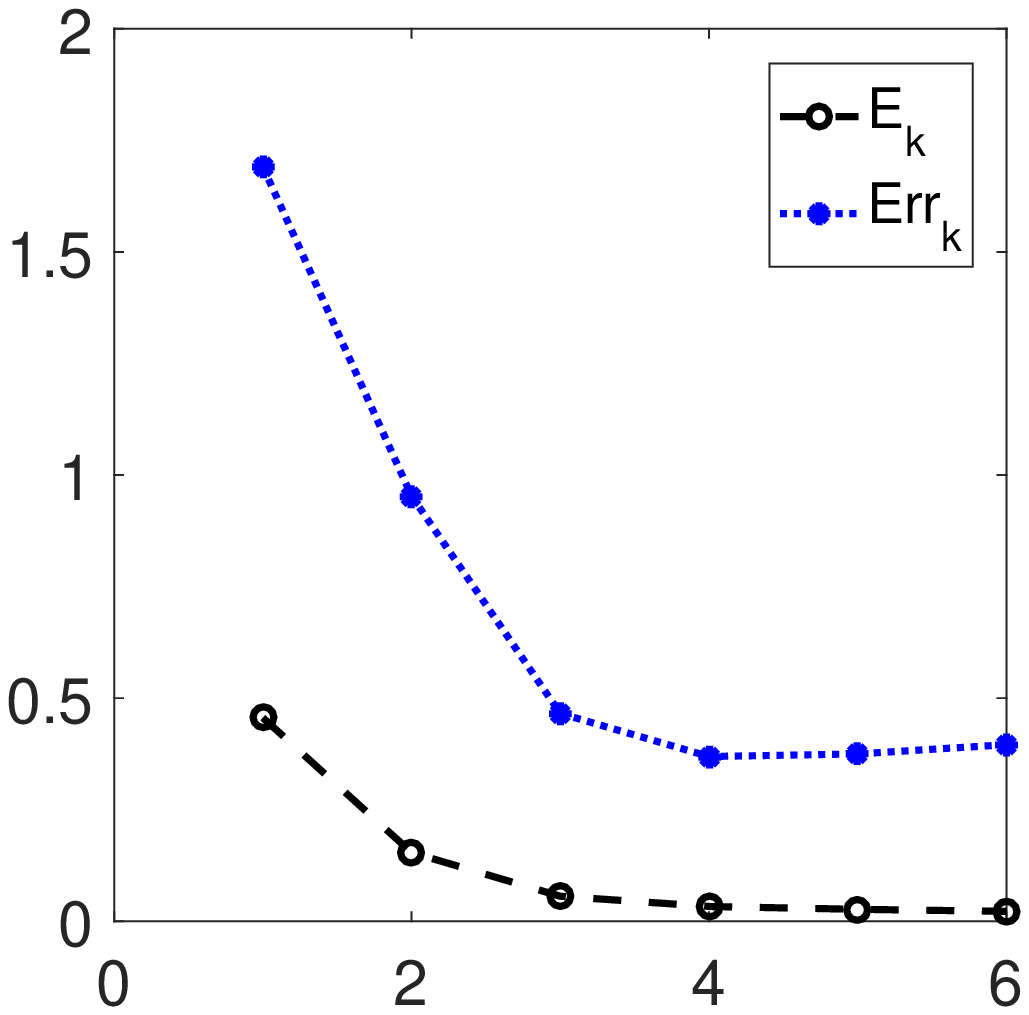}}
\caption{Reconstructions of an apple-shaped obstacle with different
levels of noise by using phaseless data and a reference ball (see example 3).
The initial guess is given by 
$(c_1^{(0)},c_2^{(0)})=(-0.7, 0.3), r^{(0)}=0.3$, the incident angle 
$\theta=11\pi/6$, and the reference ball is $(b_1,b_2)=(5, 0),
R=0.5$.}\label{PhaselessIOSP-15}
\end{figure}

\begin{figure}
\centering 
\subfigure[Reconstruction with $1\%$ noise, $\epsilon=0.02$ ]
{\includegraphics[width=0.4\textwidth]{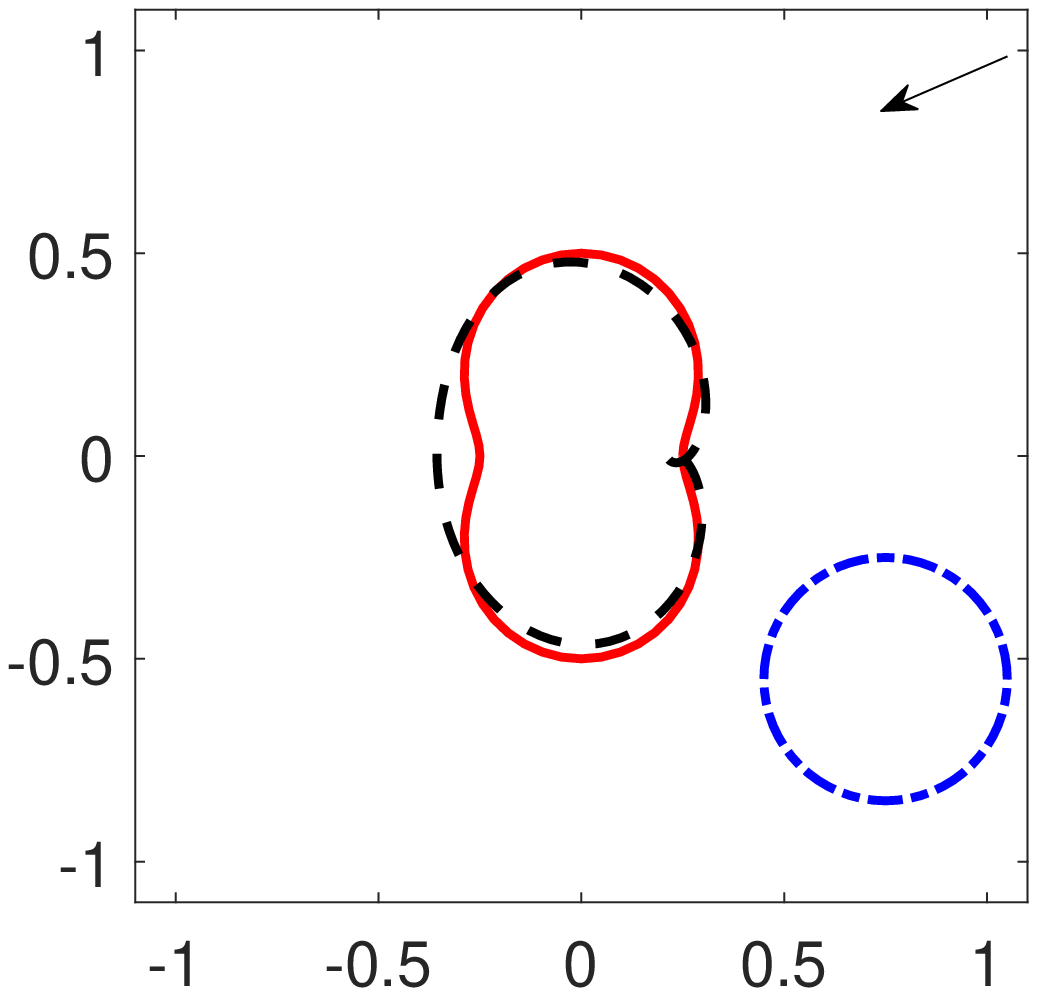}}
\subfigure[Relative error with $1\%$ noise]
{\includegraphics[width=0.4\textwidth]{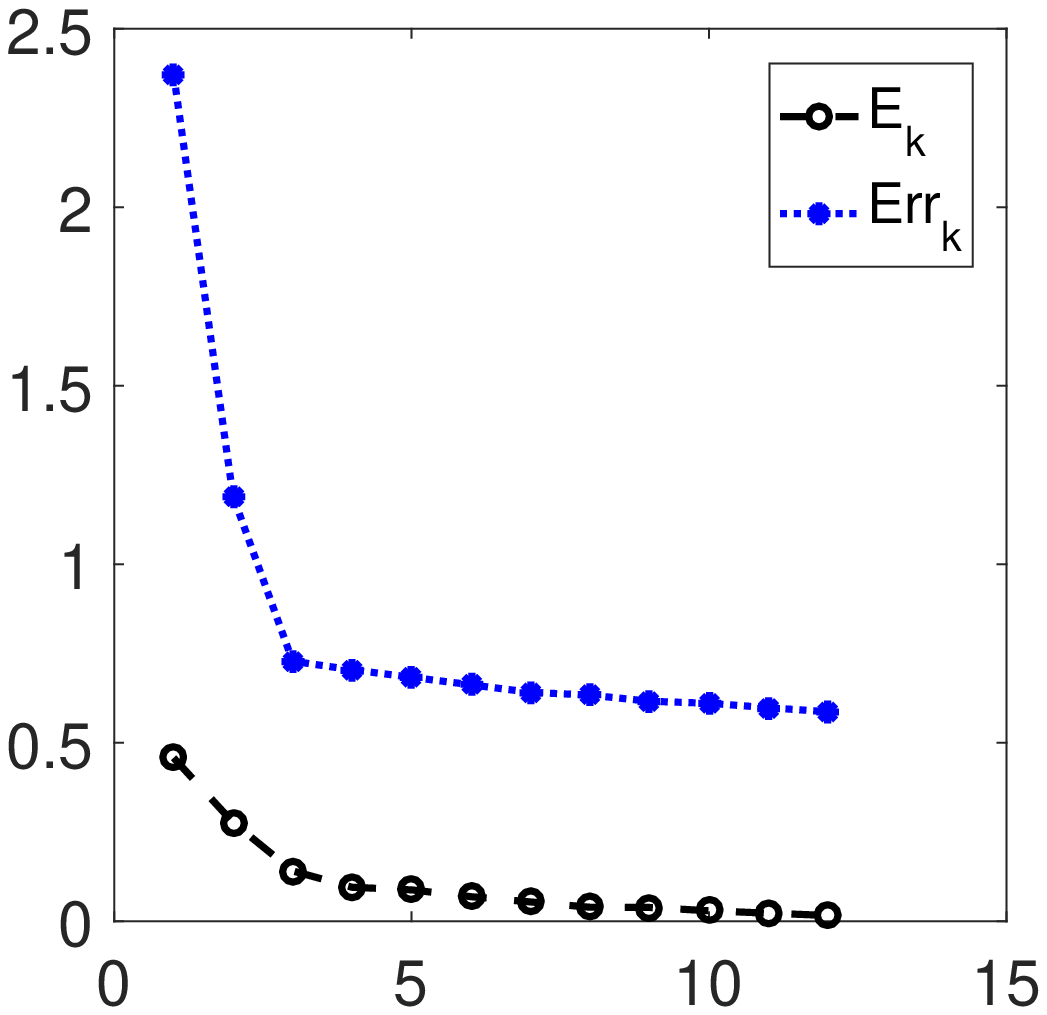}}
\subfigure[Reconstruction with $5\%$ nois, $\epsilon=0.04$]
{\includegraphics[width=0.4\textwidth]{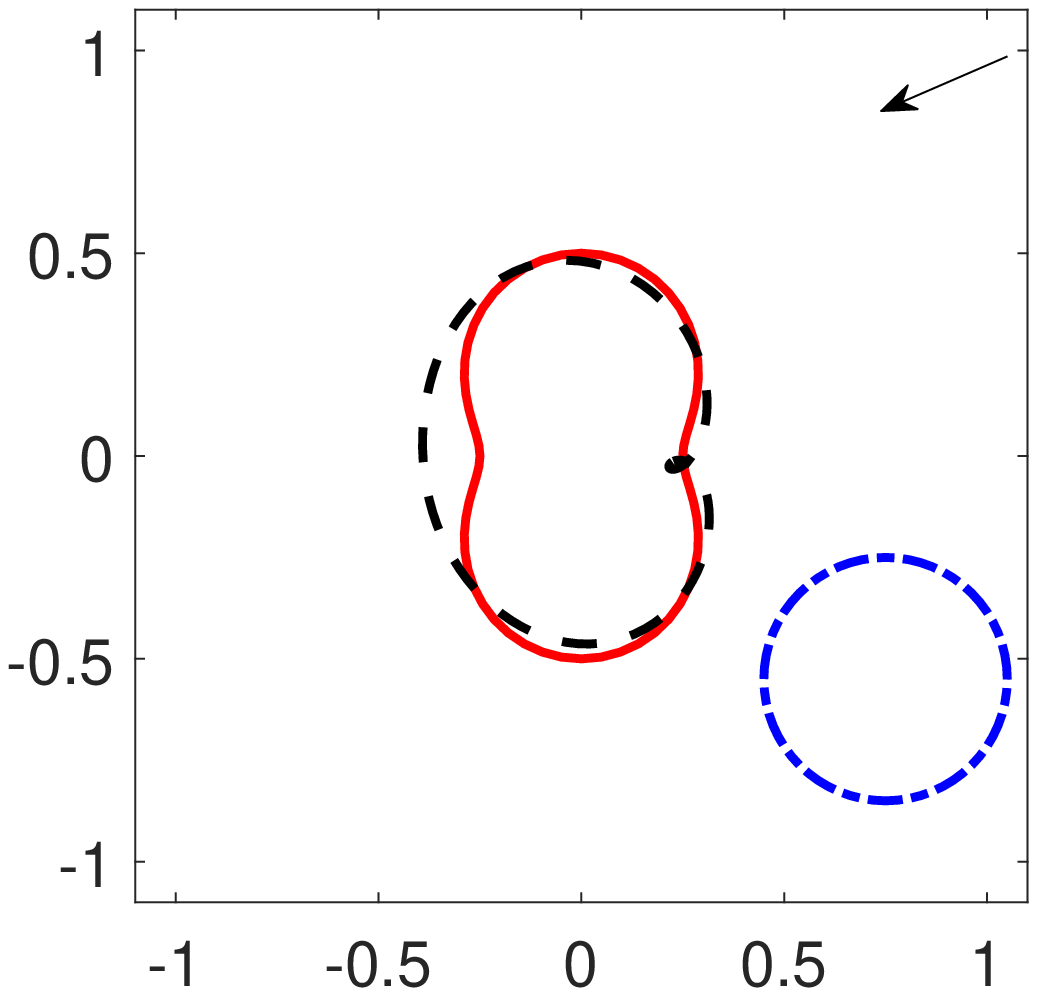}}
\subfigure[Relative error with $5\%$ noise]
{\includegraphics[width=0.4\textwidth]{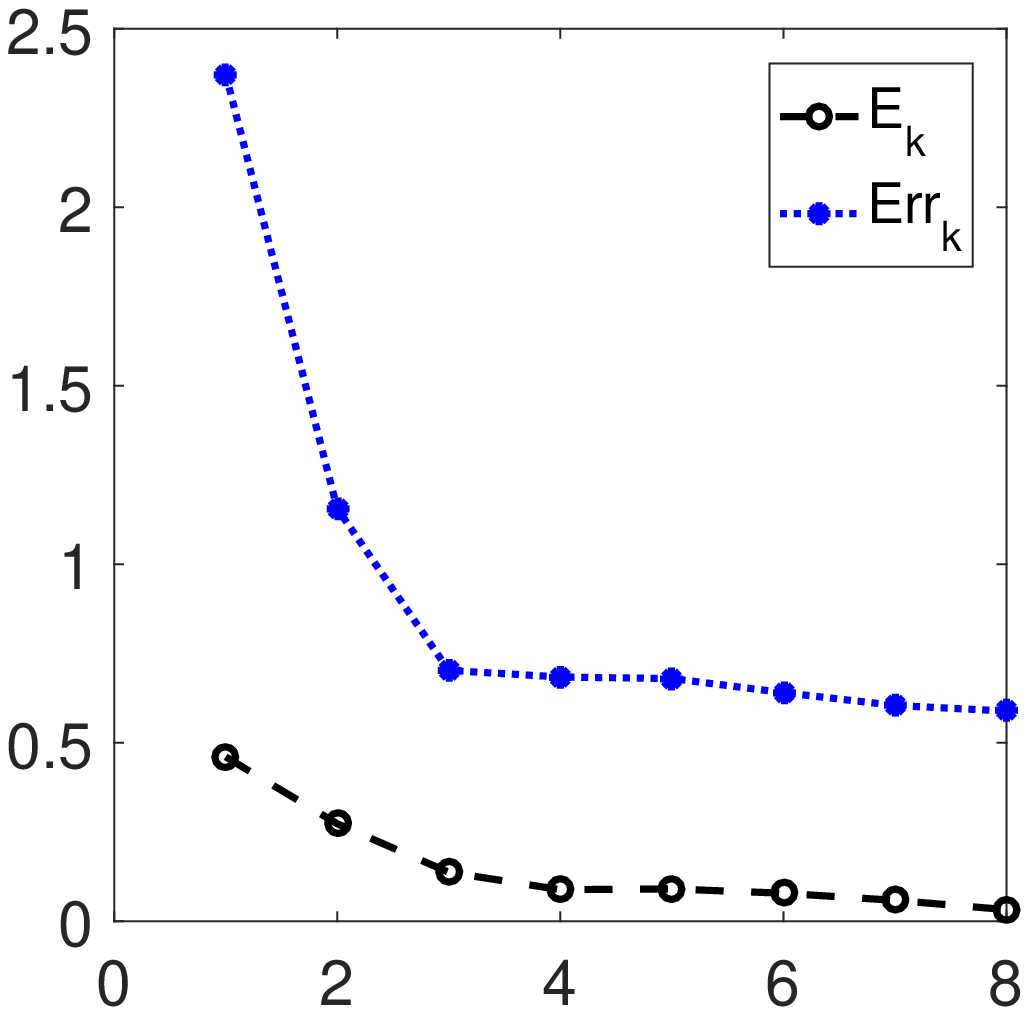}}
\caption{Reconstructions of a peanut-shaped obstacle with different
levels of noise by using phaseless data and a reference ball (see example 3).
The initial guess is given by 
$(c_1^{(0)},c_2^{(0)})=(0.75, -0.55), r^{(0)}=0.3$, the incident angle
$\theta=7\pi/6$, and the reference ball is $(b_1,b_2)=(9, 0),
R=0.5$.}\label{PhaselessIOSP-18}
\end{figure}

\begin{figure}
\centering
\includegraphics[width=0.4\textwidth]{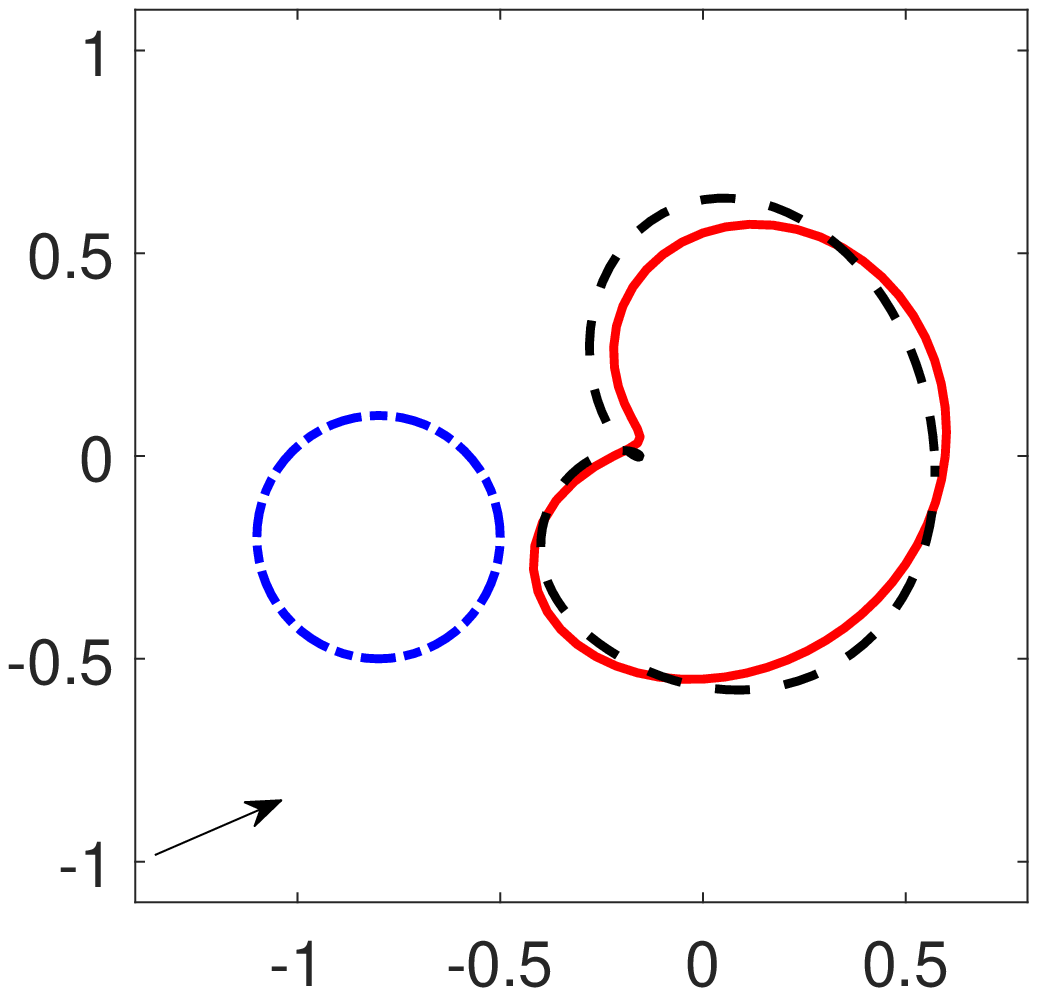}
\includegraphics[width=0.4\textwidth]{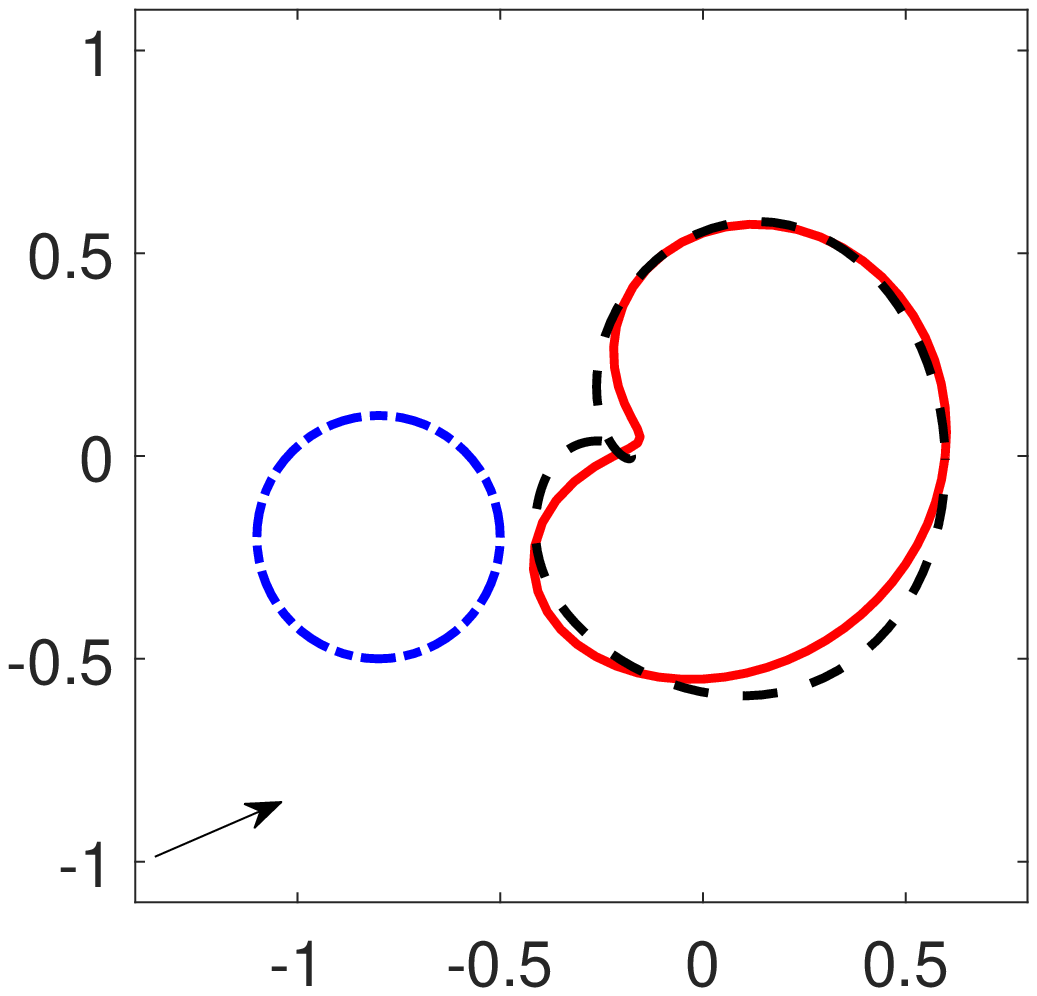}
\caption{Reconstructions of an apple-shaped obstacle with
different reference balls, where $1\%$ noise is added, the inciedent angle 
$\theta=\pi/6$, and the initial guess is given by $(c_1^{(0)},c_2^{(0)})=(-0.8,
0.2), r^{(0)}=0.3$ (see example 3). (left) $(b_1, b_2)=(5.5, 0), R=0.5$,
$\epsilon=0.01$;
(right) $(b_1, b_2)=(-8.5, 0), R=0.4$,
$\epsilon=0.015$.}\label{PhaselessIOSP-17}
\end{figure}

\begin{figure}
\centering 
\includegraphics[width=0.4\textwidth]{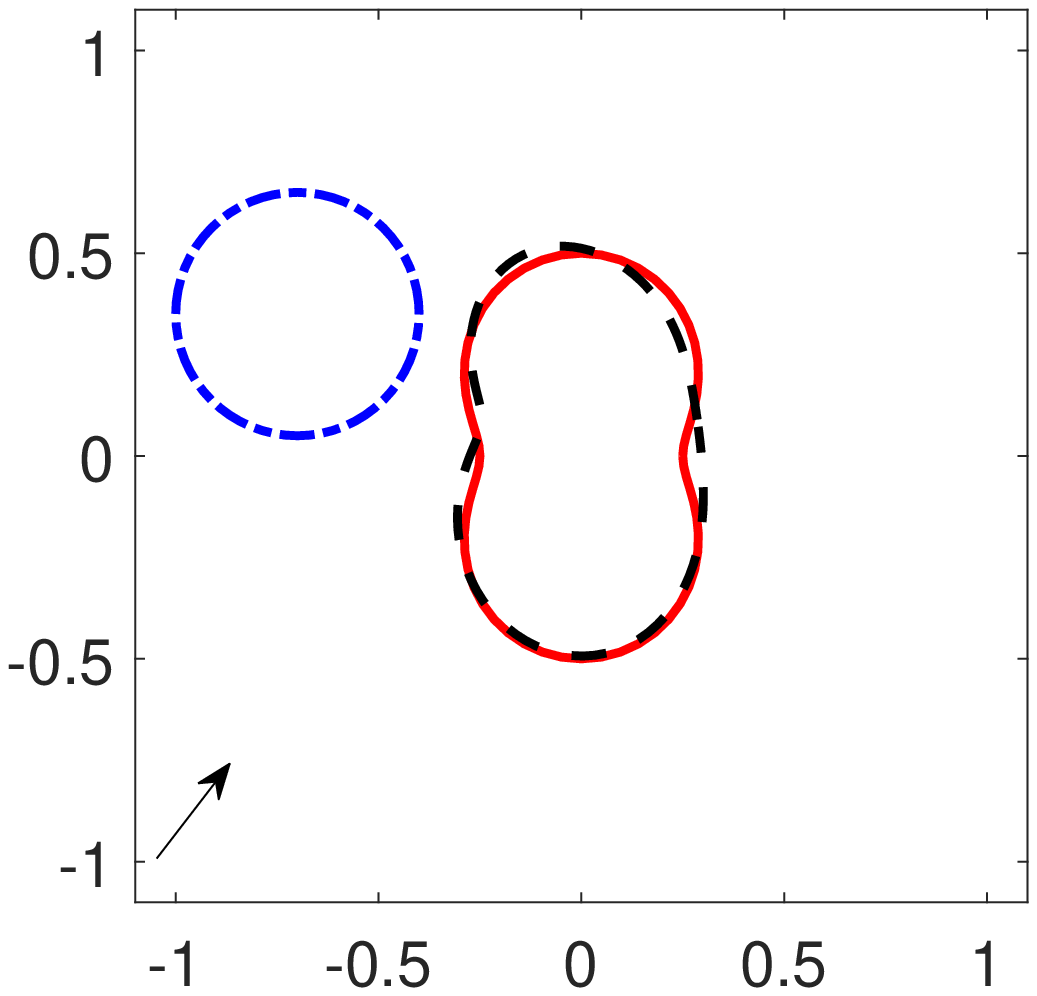}
\includegraphics[width=0.4\textwidth]{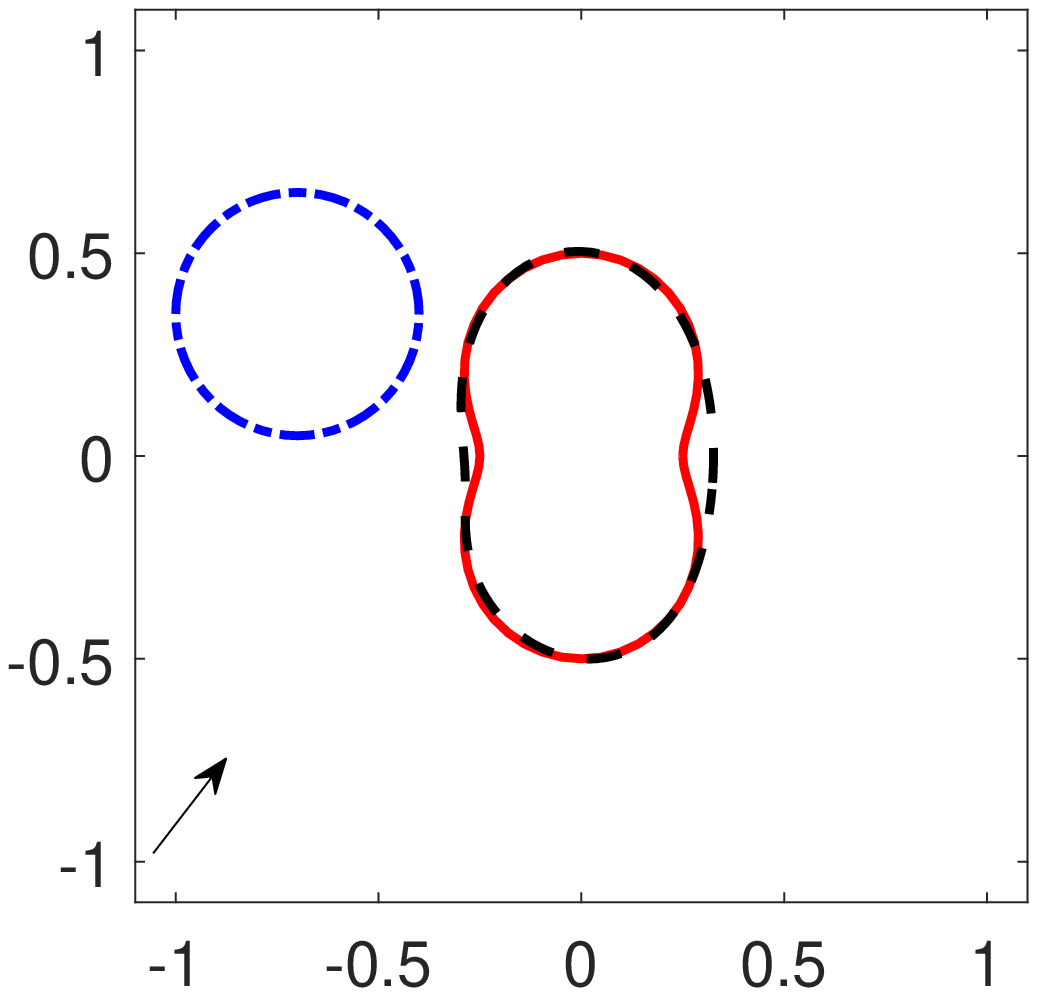}
\caption{Reconstructions of a peanut-shaped obstacle with different reference
balls, where $1\%$ noise is added, the inciedent angle $\theta=\pi/3$, and the
initial guess is given by $(c_1^{(0)},c_2^{(0)})=(-0.7, 0.35),
r^{(0)}=0.3$ (see example 3). (left) $(b_1, b_2)=(6, 0), R=0.6$,
$\epsilon=0.006$; (right)
$(b_1, b_2)=(7.5, 0), R=1.5$, $\epsilon=0.006$.}\label{PhaselessIOSP-20}
\end{figure}

\section{Numerical experiments}

In this section, we present some numerical examples to demonstrate the
feasibility of the proposed iterative reconstruction methods. In all the
examples, a
single shear plane wave is used to illuminate the obstacle. The synthetic
compressional far-field data are numerically
generated at 64 points, i.e., $\bar{n}=32$, by using another Nystr\"om method
based on Alpert quadrature to avoid the ``inverse crime"
\cite{LaiLi}. To test stability, the noisy data $u_{\infty,\delta}$ and
$|u_{\infty,\delta}|^2$ are constructed in the following way
\begin{align*}
u_{\infty,\delta}=u_{\infty}(1+\delta\breve\eta), \quad
|u_{\infty,\delta}|^2=|u^\infty|^2(1+\delta\eta),
\end{align*}
where $\breve\eta=\breve\eta_1 +\mathrm{i}\breve\eta_2$, $\breve\eta_1$,
$\breve\eta_2$ and $\eta$ are normally distributed random number ranging in
$[-1,1]$, $\delta>0$ is the relative noise level. In addition,  we denote the
$L^2$ relative error between the reconstructed and exact boundaries by
$$
{\rm Err}_k:=\frac{\|p_D^{(k)}-p_D\|_{L^2(\Omega)}}{\|p_D\|_{L^2(\Omega)}}.
$$

In the iteration, we obtain the update $\xi$ from a scaled Newton step by using 
the Tikhonov regularization and $H^2$ penalty term, i.e., 
$$
\xi=\rho\Big(\lambda\widetilde{I}+\Re(\widetilde{B}^*\widetilde{B})\Big)^{-1}
\Re(\widetilde{B}^*\widetilde{w}),
$$
where the scaling factor $\rho\geq0$ is fixed throughout the iterations.
Analogous to \cite{DZhG2018}, the regularization parameters $\lambda$ in
equation \eqref{EqualRLHuygens3} are chosen as
\[
\lambda_k:=\left\|a_{\rm p} \Big(\phi_\infty-S^\infty_{\kappa_{\rm p}}
(p^{(k-1)},\varphi_1^{(k-1)})\Big)+a_{\rm s}
\Big(\psi_\infty-S^\infty_{\kappa_{\rm s}}
(p^{(k-1)},\varphi_2^{(k-1)})\Big)\right\|_{L^2},\ k=1,2,\cdots.
\]

In all of the following figures, the exact boundary curves are displayed in
solid lines, the reconstructed boundary curves are depicted in dashed
lines $--$, and all the initial guesses are taken to be a circle with radius
$r^{(0)}=0.3$ which is indicated in the dash-dotted lines $\cdot-$. The incident
directions are denoted by directed line segments with arrows. Throughout all the
numerical examples, we take $\lambda=3.88, \mu=2.56$, the scaling factor
$\rho=0.9$, and the truncation $M=6$. The number of quadrature points is equal
to 128, i.e., $n=64$. In addition, we choose the angular frequency
$\omega=0.7\pi$ in Example 1 and $\omega=0.6\pi$ in Example 2 and Example 3. We
present the results for two commonly used examples: an apple-shaped obstacle and
a peanut-shaped
obstacle. The parametrization of the exact boundary curves for these two
obstacles are given in Table \ref{partable}.

\subsection{Example 1: the Phased IOSP}

We consider an inverse elastic scattering problem of reconstructing a rigid
obstacle from the phased far-field data by using Algorithm I. In
Figures \ref{IOSP-2} and \ref{IOSP-5}, the results are shown for the
apple-shaped obstacle and the peanut-shaped obstacles with $1\%$ and
$5\%$ noise, respectively. The relative $L^2$ error ${\rm Err}_k$
between the reconstructed obstacle and the exact obstacle and the relative
error estimator ${\rm E}_k$ defined in \eqref{relativeerror} are plotted
against the number of iterations. As can be seen from the error curves, the
relative error estimator ${\rm E}_k$ follows the actual relative error ${\rm
Err}_k$ very well and is a reasonable choice of the stopping criteria for the
iterations. For the fixed incident direction, Figures \ref{IOSP-3} and
\ref{IOSP-6} show the reconstructions of the apple-shaped obstacle and the
peanut-shaped obstacle by using different initial guesses; for the fixed initial
guess, Figures \ref{IOSP-4} and \ref{IOSP-7} show the reconstructions of the
apple-shaped obstacle and the peanut-shaped obstacle by using different
incident directions. As shown in these results, the reconstruction is not
sensitive to the initial guess or the incident direction. The location and shape
of the obstacle can be simultaneously and satisfactorily reconstructed for a
single incident plane wave.

\subsection{Example 2: the phased IOSP with a reference ball}

Now we investigate the inverse scattering problem of reconstructing a rigid
obstacle from the phased far-field data by introducing a reference ball. The
reconstructions with $1\%$ noise and $5\%$ noise for the apple-shaped and
peanut-shaped obstacles are shown in Figure \ref{IOSPreference-8} and Figure
\ref{IOSPreference-12}, respectively. The relative $L^2$ error ${\rm Err}_k$
and the relative error estimator ${\rm E}_k$ are also presented in the figures.
Tests are also done by using different initial guesses and different
incident directions. In addition, we test the influence by using reference balls
with different
radius and location. For the fixed initial guess and incident direction, Figures
\ref{IOSPreference-11} and \ref{IOSPreference-14} show the reconstructions of
the apple-shaped obstacle and the peanut-shaped obstacle by using different
reference balls. The reconstructed obstacles agree very well with exact ones. As
can be seen, the results by using the reference ball technique are
comparable with those without the reference ball in Example 1. The method
works very well to reconstruct the location and the shape.

\subsection{Example 3: the phaseless IOSP with a reference ball}

By adding a reference ball to the inverse scattering system, we consider the
inverse scattering problem of reconstructing a rigid obstacle from phaseless
far-field data by using Algorithm II. The
reconstructions with $1\%$ noise and $5\%$ noise are shown in Figures
\ref{PhaselessIOSP-15}--\ref{PhaselessIOSP-18}. Again, the relative $L^2$ error
${\rm Err}_k$ and the relative error estimator ${\rm E}_k$ are plotted in the
figures. Figures \ref{PhaselessIOSP-17} and \ref{PhaselessIOSP-20} show the
reconstructions of the apple-shaped obstacle and the peanut-shaped obstacle by
using different reference balls. From these figures, we observe that the
translation invariance property of the phaseless far-field pattern can be broken
by introducing a reference ball. Based on this technique, both of the location
and shape of the obstacle can be satisfactorily reconstructed from the phaseless
far-field data by using a single incident plane wave.

\section{Conclusions and future works}

In this paper, we have studied the two-dimensional inverse elastic scattering
problem by the phased and phaseless far-field data for a single incident plane
wave. Based on the Helmholtz decomposition, the elastic wave equation is
reformulated into a coupled boundary value problem of the Hemholtz equation. The
relationship between compressional or shear far-field pattern for the Navier
equation and the corresponding far-field pattern for the Helmholtz equation are
investigated. The translation invariance property of the phaseless
compressional and shear far-field pattern is proved. A nonlinear integral
equation method is developed for the inverse problem. For the phaseless data, we
introduce a reference ball technique to the inverse
scattering system in order to break the translation invariance. Numerical
examples are presented to demonstrate the effectiveness and stability of the
proposed method. Future work includes the different boundary conditions of the
obstacle and the three-dimensional problem. It is a challenging problem for the
uniqueness of the phaseless inverse elastic scattering problem with a reference
ball. We intend to investigate these issues and report the progress elsewhere.

\end{document}